\title{Average size of 2-Selmer groups\\ of elliptic curves over function fields}
\author{Q.P. H\`{\^o}, V.B. L\^{e} H\`{u}ng, B.C. Ng\^{o}}
\begin{document}
	\maketitle

	\begin{abstract}
		Employing a geometric setting inspired by the proof of the Fundamental Lemma, we study some counting problems related to the average size of 2-Selmer groups and hence obtain an estimate for it.
	\end{abstract}
	
	
	\section{Introduction}
	
	By the Mordell-Weil theorem, for every elliptic curve $E$ over a global field $K$, the group $E(K)$ of $K$-rational points of $E$ is a finitely generated abelian group. The rank of $E(K)$, called the Mordell-Weil rank, is a fascinating invariant as revealed by the Birch and Swinnerton-Dyer conjecture. It remains nevertheless very mysterious. For instance, it is not known if the  Mordell-Weil rank of elliptic curves defined over a given number field is bounded. Over function fields, according to Ulmer \cite{ulmer_elliptic_2002}, the Mordel-Weil rank is known to be unbounded.
		
	In the ground breaking papers \cite{bhargava_binary_2010} and \cite{bhargava_ternary_2010}, Bhargava and Shankar were able to prove an upper bound for the average rank of $E(\mathbb{Q})$, when $E$ ranges over the set of elliptic curves defined over $\mathbb{Q}$.
	
	An attractive feature of their work is its rather elementary nature. Bhargava and Shankar bound the average rank by estimating the average size of the 2-Selmer groups $\Sel_2(E)$ of $E$. This computation is then carried out as the solution of a problem in geometry of numbers which involves counting integral points in a certain fundamental domain built out of the action of $\PGL_2$ on the space of binary quartic polynomials.
	
	The aim of this work is to introduce certain moduli spaces, also built out of the action of $\PGL_2$ on binary quartics, which should be viewed as the geometric analog of this problem in geometry of numbers in the case of global fields of rational functions on a curve defined over a finite field. Counting points on these moduli spaces, which is roughly counting torsors for suitable quasi-finite group schemes over the curve, will then help to estimate the average size of 2-Selmer groups, and hence the average rank of elliptic curves. This gives a (weakened) function field analog of the main result of \cite{bhargava_binary_2010}, valid for all functions fields with very mild restrictions.
  
  \begin{thm*}
    Let $K$ be a global function field over a finite field $\mathbb{F}_q$ with $q>32$ and $\car \mathbb{F}_q > 3$. Then the average size of 2-Selmer groups of elliptic curves over $K$ when ordered by height, is bounded above and below by explicit functions $3+F(q)$ and $3-G(q)$. Furthermore $F(q)$, $G(q)$ tend to 0 as $q\to \infty$.     
  \end{thm*}

  More precise statements of our result are given in subsection \ref{subsec:the_results}. We also remark that the results of \cite{de_jong_counting_2002} give upper bounds for the size of 3-Selmer groups of a similar nature for the case $K = k(\mathbb{P}^1) = \mathbb{F}_q(t)$. After the completion of this paper, we learned from J. Ellenberg that Y. Zhao in \cite{zhao_sieve_2013} has also obtained results in the case of cubic polynomials using an argument which is in part similar to ours. It seems that our methods may be applicable to more general coregular representations, for example the ones studied in \cite{jack_thorne_vinbergs_2013}, and we hope to return to this in future work.

\subsection*{Acknowledgement}
This work is partially supported by the NSF grant DMS-1302819 and a Simons investigator's grant of B.C. Ng\^o. The work started during a summer seminar on the work \cite{bhargava_binary_2010}, organized by B.C. Ng\^{o} at the Vietnam Institute for Advanced Studies in Mathematics (VIASM). V.B. L\^{e} H\`{u}ng and Q. H\`{\^o} would like to thank the VIASM for its hospitality. V.B. L\^{e} H\`{u}ng would like to thank the University of Chicago for its support and hospitality during a visit where part of this work was done. We thank the referees for their careful reading of our manuscript.
	
	\paragraph{Notations:} $k = \mathbb{F}_q$ with $\car k \neq 2, 3$, $\lbar{k}$ its algebraic closure, $C$ is a smooth, complete, geometrically connected curve over $k$ such that $C(k) \neq \emptyset$, $K = k(C)$, the field of rational functions on $C$, and $G = \PGL_2$.
	
	\section{Elliptic curves over $K$} \label{sec:Weierstrass_models}
We will need to specify an ordering on the infinite set of isomorphism classes of elliptic curves over $K = k(C)$ in order to make sense of the notion of average. This can be done via the notion of height, which in turn relies on the theory of minimal Weierstrass models of elliptic curves.	
	
	\subsection{Height and minimal Weierstrass model}
	
	We will recall the statements of the necessary results of the theory of Weierstrass model, and refer the readers to the literature for the proofs.

	\begin{defn}
    A family of Weierstrass curves over a scheme $S$ is a flat family of arithmetic genus one curves $\pi:E\to S$ with integral geometric fibers, equipped with a section $e:S\to E$ not passing through the cusps or nodes of any fiber.
  \end{defn}
  
  A family of Weierstrass curves admits a simple presentation, which justifies its name.
  
  \begin{prop}\label{prop:weierstrass_form_in_family}
    Let $(E, e)$ be a family of Weierstrass curves over a scheme $S$. Then, there exists a triple $(\mathcal{L}, a, b)$ with $\mathcal{L}$ a line bundle over $S$, $a\in H^0(C, \mathcal{L}^{\otimes 4})$ and $b\in H^0(C, \mathcal{L}^{\otimes 6})$ such that the pair $(E, e)$ is isomorphic to the closed subscheme of $\mathbb{P}(\mathcal{L}^{\otimes -2} \oplus \mathcal{L}^{\otimes -3} \oplus \mathcal{O}_C)$ defined by the equation
    \[
      yz^2 = x^3 + axz^2 + bz^3,
    \]
    and the section $e: S\to E$ is given by $(0, 1, 0)$.
    
    Moreover, $(\mathcal{L}, a, b)$ is unique up to the following identification: $(\mathcal{L}, a, b) \sim (\mathcal{L}', a', b')$ when $\mathcal{L}\cong \mathcal{L}'$ and $(a, b) = (c^4 a, c^6 b)$ for some $c\in k^\times$. In particular, $(E, e)$ completely determines $\mathcal{L}$, and in fact, $\mathcal{L} = \pi_*(\mathcal{O}_E(e)/\mathcal{O}_E)^{-1}$.
  \end{prop}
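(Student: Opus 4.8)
The strategy is to reconstruct $(E,e)$ from the graded family of pushforwards $\mathcal E_n := \pi_*\mathcal O_E(ne)$, running the classical construction of a Weierstrass equation over the base $S$. The first task is the cohomological input. Since $\pi$ is proper and flat with geometrically integral fibres and $e$ meets every fibre in its relative smooth locus, $e(S)$ is an effective relative Cartier divisor. On a geometric fibre $E_s$ --- an integral proper Gorenstein curve of arithmetic genus $1$, whose dualizing sheaf $\omega_{E_s}$ has degree $0$ --- one has $h^0(\mathcal O_{E_s})=1$ (geometric integrality), and for $n\ge 1$, Riemann--Roch together with the vanishing $h^1(\mathcal O_{E_s}(ne_s))=h^0(\omega_{E_s}(-ne_s))=0$ gives $h^0(\mathcal O_{E_s}(ne_s))=n$. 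By cohomology and base change one then concludes that $\mathcal E_n$ is locally free of rank $\max(n,1)$, that its formation commutes with arbitrary base change, that $R^1\pi_*\mathcal O_E(ne)=0$ for $n\ge 1$, and that $\mathcal E_0=\mathcal E_1=\mathcal O_S$.

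Next I would bring in the Hodge line bundle and the pole-order filtration. For $n\ge 1$ there is a canonical isomorphism $\mathcal O_E(ne)/\mathcal O_E((n-1)e)\cong e_*\bigl(N_{e/E}^{\otimes n}\bigr)$ with $N_{e/E}=e^*\mathcal O_E(e)$, so $\mathcal L := \pi_*\bigl(\mathcal O_E(e)/\mathcal O_E\bigr)^{-1}$ equals $N_{e/E}^{-1}=e^*\mathcal O_E(-e)\cong e^*\Omega^1_{E/S}$, the Hodge bundle, and pushing forward the filtration of $\mathcal O_E(ne)$ by order of pole along $e$ gives, for $n\ge 2$, short exact sequences of locally free $\mathcal O_S$-modules
\[
  0 \longrightarrow \mathcal E_{n-1} \longrightarrow \mathcal E_n \longrightarrow \mathcal L^{\otimes -n} \longrightarrow 0,
\]
the last map being the leading coefficient of a section at $e$.

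Now I would construct the Weierstrass coordinates and descend. Working Zariski-locally over an affine open $U$ trivializing $\mathcal L$, the sequences above split, so I may choose $x\in H^0(E_U,\mathcal O_E(2e))$ and $y\in H^0(E_U,\mathcal O_E(3e))$ with leading coefficient $1$; the monomials of pole order at most $6$ other than $y^2$ form an $\mathcal O_U$-basis of $\mathcal E_6$, and expressing $y^2$ in that basis and comparing leading coefficients yields a Weierstrass relation whose degree-$i$ coefficient lies in $H^0(U,\mathcal L^{\otimes i})$. Since $2,3\in\mathcal O_S^\times$, completing the square in $y$ and the cube in $x$ clears the lower coefficients and puts the relation in the form of the statement (after homogenizing with $z$), and simultaneously rigidifies $(x,y)$: the only change of variables preserving this normalization is $(x,y)\mapsto(u^2 x,u^3 y)$ with $u$ a unit, the $z$-coordinate being the canonical section $1$. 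Hence the transition data between two local presentations are matrices $\mathrm{diag}(u_{ij}^2,u_{ij}^3,1)$, and the leading-coefficient identifications match the twists so that the local $\mathbb P^2$-bundles patch into $\mathbb P(\mathcal L^{\otimes -2}\oplus\mathcal L^{\otimes -3}\oplus\mathcal O_S)$ while the local coefficients patch into global $a\in H^0(S,\mathcal L^{\otimes 4})$ and $b\in H^0(S,\mathcal L^{\otimes 6})$. The residual ambiguity is $(a,b)\mapsto(u^4 a,u^6 b)$ for $u\in H^0(S,\mathcal O_S^\times)$ --- over $S=C$ this is $k^\times$ --- and $\mathcal L=\pi_*(\mathcal O_E(e)/\mathcal O_E)^{-1}$ by construction. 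It remains to see that this cubic is all of $E$: using the exact sequences one checks by induction that the monomials in $x,y$ generate $\mathcal E_n$ fibrewise for all $n$, so $\bigoplus_n\mathcal E_n$ is generated by $1,x,y$ subject to exactly the above relation, whence $E\cong\mathrm{Proj}$ of this algebra, i.e.\ the asserted closed subscheme, with $e=(0,1,0)$.

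I expect the crux to be the cohomological input of the first step together with the fibrewise generation statement: the intervening material is the Weierstrass algorithm executed in families and is essentially bookkeeping, but the local freeness, base-change compatibility and fibrewise spanning of the $\mathcal E_n$ have to be established uniformly over the nodal and cuspidal fibres, where one relies on the hypothesis that the fibres are geometrically integral Gorenstein curves of arithmetic genus one rather than on any smoothness.
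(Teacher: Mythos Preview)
Your sketch is the standard argument and is essentially correct; it is in fact more than what the paper provides, since the paper does not prove this proposition but simply refers to \cite[Theorem~2.1]{miranda_moduli_1981} and \cite{suominen_introduction_1970}. The construction you outline---cohomology and base change to get the local freeness of $\pi_*\mathcal O_E(ne)$ using that the fibres are integral Gorenstein of arithmetic genus one and that $e$ meets the smooth locus, then the pole-order filtration to identify the successive quotients with powers of the Hodge bundle, then the local Weierstrass algorithm and the gluing via $\mathrm{diag}(u^2,u^3,1)$---is exactly the content of those references, so there is nothing to compare: you have reproduced the cited proof rather than offered an alternative.
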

  \begin{proof}
    See~\cite[theorem 2.1]{miranda_moduli_1981} and~\cite{suominen_introduction_1970}.
  \end{proof}

	\begin{rmk} \label{rmk:univerrsal_weierstrass} Proposition~\ref{prop:weierstrass_form_in_family} allows us to construct the moduli stack of Weierstrass curves as the stack quotient $[\mathbb A^2/\Gm]$, with $\Gm$ acting on $\mathbb A^2$ by the formula $c\cdot(a,b)=(c^4 a,c^6b)$. The universal family is the closed subscheme of $\mathbb{P} (\mathcal{L}_{uni}^{\otimes -2} \oplus \mathcal{L}_{uni}^{\otimes -3} \oplus \mathcal{O}_C)$ cut out by the equation $yz^2 = x^3 + axz^2 + bz^3$, with $\mathcal L_{uni}$ being the pullback of the universal line bundle on $B\Gm$, and the section $e:S\to E$ given by $(0, 1, 0)$.   
  \end{rmk}

  \begin{thm}\label{thm:minimal_Weierstrass}
    Let $(E_K, e_K)$ be an elliptic curve over $K$. Then, we can extend $(E_K, e_K)$ to a family of Weierstrass curves $(E, e)$ over $C$. Moreover, the extension is unique up to isomorphism if we demand that the line bundle $\mathcal{L}=\pi_*(\mathcal{O}_E(e)/\mathcal{O}_E)^{-1}$  (see proposition~\ref{prop:weierstrass_form_in_family}) is of minimal degree.
  \end{thm}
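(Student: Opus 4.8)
The plan is to deduce the theorem from the elementary theory of minimal Weierstrass models over the discrete valuation rings $\mathcal{O}_v$ ($v$ a closed point of $C$), reorganised through Proposition~\ref{prop:weierstrass_form_in_family}. Since $\car K\neq 2,3$, the elliptic curve $(E_K,e_K)$ admits a short Weierstrass equation $y^2=x^3+A_0x+B_0$ with $A_0,B_0\in K$, unique up to the substitution $(A_0,B_0)\mapsto(u^4A_0,u^6B_0)$ for $u\in K^\times$; I fix one such pair. The heart of the argument is a dictionary between extensions of $(E_K,e_K)$ to families of Weierstrass curves over $C$ and certain divisors on $C$.

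To set it up, given such an extension $(E,e)$ with associated triple $(\mathcal{L},a,b)$ as in Proposition~\ref{prop:weierstrass_form_in_family}, I would first choose a nonzero rational section of $\mathcal{L}$, identifying $\mathcal{L}$ with $\mathcal{O}_C(D)$ for a divisor $D$; then $a,b$ become rational functions $f_a,f_b$ with $\mathrm{div}(f_a)+4D\ge 0$ and $\mathrm{div}(f_b)+6D\ge 0$, and $(f_a,f_b)$ is a short Weierstrass pair for $E_K$, hence $(f_a,f_b)=(u^4A_0,u^6B_0)$ for some $u\in K^\times$. Putting $D^\ast:=D+\mathrm{div}(u)$, which alters neither the isomorphism class of $\mathcal{O}_C(D)$ nor its degree, the two conditions become
\[
  \mathrm{div}(A_0)+4D^\ast\ge 0,\qquad \mathrm{div}(B_0)+6D^\ast\ge 0.
\]
One checks that $D^\ast$ is independent of the chosen rational section (another choice scales $u$ by an element of $k^\times$, which has trivial divisor) and of the Weierstrass equation, so that $\deg\mathcal{L}=\deg D^\ast$ is an invariant of $(E,e)$. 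Conversely, any divisor $D^\ast$ satisfying the two displayed inequalities yields, through Proposition~\ref{prop:weierstrass_form_in_family} applied to $(\mathcal{O}_C(D^\ast),A_0,B_0)$, a genuine family of Weierstrass curves over $C$ extending $(E_K,e_K)$: the projection is flat, being a family of plane cubics over a reduced base with constant Hilbert polynomial; each geometric fiber is a (possibly nodal or cuspidal) Weierstrass cubic, hence integral; and the section $(0,1,0)$ is a smooth point of every fiber, so it avoids all nodes and cusps.

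It then remains to carry out the minimisation. Writing $D^\ast=\sum_v n_v[v]$, the two inequalities are equivalent to the local conditions $n_v\ge m_v:=\max\!\bigl(\lceil -\mathrm{ord}_v(A_0)/4\rceil,\ \lceil -\mathrm{ord}_v(B_0)/6\rceil\bigr)$, and $m_v=0$ for all but finitely many $v$. Thus the admissible $D^\ast$ are exactly those with $D^\ast\ge D_{\min}:=\sum_v m_v[v]$, whence $\deg\mathcal{L}=\deg D^\ast\ge\deg D_{\min}$ with equality if and only if $D^\ast=D_{\min}$. This gives existence of an extension with $\mathcal{L}$ of minimal degree, corresponding to $D_{\min}$, and uniqueness up to isomorphism — two extensions with the same $D^\ast$ being isomorphic by the uniqueness clause of Proposition~\ref{prop:weierstrass_form_in_family}.

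I expect the only genuinely delicate point to be the bookkeeping in the dictionary: checking that $\deg\mathcal{L}$ is faithfully recorded by the single divisor $D^\ast$ and is insensitive to the choices of short Weierstrass equation over $K$ and of rational trivialisation of $\mathcal{L}$. The remaining ingredients — the constancy-of-Hilbert-polynomial criterion for flatness, and the fact that a Weierstrass cubic is always geometrically integral with a smooth point at infinity — are routine.
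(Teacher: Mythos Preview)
Your argument is sound and follows the standard route to minimal Weierstrass models in residue characteristic $\neq 2,3$: translate the data of Proposition~\ref{prop:weierstrass_form_in_family} into a single divisor $D^\ast$ and then minimise coefficientwise. The paper itself does not prove this theorem; it simply cites \cite[section~9.4]{liu_algebraic_2006}, where exactly this local-to-global minimisation is carried out. So your proposal supplies what the paper defers to the literature, and there is nothing to compare against.

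One small correction to your bookkeeping: the parenthetical ``another choice scales $u$ by an element of $k^\times$'' is not the right reason. Replacing the rational section $s$ of $\mathcal{L}$ by $gs$ with $g\in K^\times$ sends $D\mapsto D+\mathrm{div}(g)$ and $f_a\mapsto f_a/g^4$, hence $u\mapsto u/g$; it is the cancellation $(D+\mathrm{div}(g))+\mathrm{div}(u/g)=D+\mathrm{div}(u)$ that makes $D^\ast$ independent of the section. The residual $k^\times$ ambiguity (in fact $\pm 1$) arises separately, from the pair $(u^4,u^6)$ determining $u$ only up to sign. You should also flag the degenerate cases $A_0=0$ or $B_0=0$, where the corresponding ceiling is to be read as $-\infty$ and drops out of the maximum; since $E_K$ is smooth they cannot both vanish, so $D_{\min}$ is still well defined.
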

	\begin{proof}
		See~\cite[section 9.4]{liu_algebraic_2006}.
	\end{proof}

\begin{defn}
The height of an elliptic curve $E_K$ defined over $K$ is defined to be the minimal $\deg\mathcal{L}$ in the theorem above.
\end{defn}

Using proposition~\ref{prop:weierstrass_form_in_family} and remark~\ref{rmk:univerrsal_weierstrass}, theorem \ref{thm:minimal_Weierstrass} can now be reformulated in a slightly different way. Every elliptic $(E_K,e_K)$ over the generic point $\Spec(K)$ of $C$ can be extended as a family of Weierstrass curves $(E,e)$ over $C$, and hence gives rise to a morphism $h_E:C\to [\mathbb A^2/\Gm]$. The extension is unique if $\deg h_E^* \mathcal L_{uni}$ is minimal.

Let $(E,e)$ be a family of Weierstrass curves over $C$. Then the fiber $E_v$ over a point $v\in C$ is singular if and only if $v$ lies in the zero divisor of the discriminant 		
		$$\Delta(a,b) = -(4a^3+27b^2) \in \Gamma(C, \mathcal{L}^{\otimes 12}).$$  
We will sometimes use the notation $\Delta(E_K)$ to denote the discriminant of  the minimal Weierstrass model.
	
\begin{defn} \label{transversal}
A morphism $\alpha:C\to [\mathbb{A}^2/\Gm]$ is said to be transversal to the discriminant locus if the zero divisor of $\alpha^*\Delta = 4a^3 + 27b^2 \in \Gamma(C, \mathcal{L}^{\otimes 12})$ is multiplicity free.	
\end{defn}
		
\subsection{Statements of the main theorems} \label{subsec:the_results}

We recall that for each  elliptic curve $E$ defined over $K$, the 2-Selmer group of $E$ is defined as the kernel of the homomorphism:
\[
  \Sel_2(E)={\rm ker}(H^1(K, E[2]) \to \prod_{v\in |C|} H^1(K_v, E)).
\]
We will now state the main results of the paper. First, we introduce the following notation:
	\[
		\AS(d) = \disfrac{\sum_{h(E_K) \leq d} \frac{|\Sel_2(E_K)|}{|\Aut(E_K)|}}{\sum_{h(E_K) \leq d} \frac{1}{|\Aut(E_K)|}}
		\qquad \text{and} \qquad 
		\AR(d) = \disfrac{\sum_{h(E_K) \leq d} \frac{|\Rank(E_K)|}{|\Aut(E_K)|}}{\sum_{h(E_K) \leq d} \frac{1}{|\Aut(E_K)|}}. \teq \label{eq:AS_AR_formula}
	\]
	Similarly, we denote $\AS(\mathcal{L})$ and $\AR(\mathcal{L})$ to be similar to $\AS(d)$ and $\AR(d)$ except that we restrict ourselves to those elliptic curves whose minimal models are given by a fixed line bundle $\mathcal{L}$ (see theorem~\ref{thm:minimal_Weierstrass}). Note that it makes sense to talk about $\AS$ and $\AR$ since the number of isomorphism classes of elliptic curves over $K$ with bounded height is finite.

In all the results below, we make the assumption that the base field $k$ has more than 32 elements. The source of this restriction will be explained in subsection~\ref{subsec:E[2](C)_nontrivial}.
	
	\begin{thm} \label{thm:main}
		We have the following bounds for $\AS(\mathcal{L})$:
		$$
		 \limsup_{\deg \mathcal{L} \to \infty} \AS(\mathcal{L}) \leq 3 + \frac{T}{(q-1)^2},
		$$
		and
		$$
		  \liminf_{\deg \mathcal{L} \to \infty} \AS(\mathcal{L}) \geq 3\zeta_C(10)^{-1},
		$$
where $T$ is a constant depending only on $C$, and $\zeta_C$ is the zeta function associated to $C$. 
	\end{thm}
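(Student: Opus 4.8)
\quad The plan is to rewrite $\AS(\mathcal{L})$ as a ratio of (groupoid-weighted) counts of $\mathbb{F}_q$-points of two moduli stacks, and then to estimate these counts. By Proposition~\ref{prop:weierstrass_form_in_family}, Remark~\ref{rmk:univerrsal_weierstrass} and Theorem~\ref{thm:minimal_Weierstrass}, the denominator is the count of minimal maps $h\colon C\to[\mathbb{A}^2/\Gm]$ with $h^{*}\mathcal{L}_{uni}\cong\mathcal{L}$ and $\Delta\not\equiv 0$; by Riemann--Roch this is $\#\bigl(H^{0}(C,\mathcal{L}^{\otimes 4})\oplus H^{0}(C,\mathcal{L}^{\otimes 6})\bigr)/(q-1)$, up to the thin locus where $\Delta\equiv 0$. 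For the numerator I would invoke the classical orbit parametrization underlying $2$-descent: the generic stabilizer of $G=\PGL_2$ acting on binary quartic forms is the Klein group $(\mathbb{Z}/2)^2$, which is precisely $E[2]$ as a Galois module, so that $H^{1}(K,E[2])$ is parametrized by $\PGL_2(K)$-orbits of binary quartics with invariants $(I,J)$ equal to those of the minimal Weierstrass model of $E$, and restricting to everywhere-locally-soluble orbits of \emph{integral} quartics over $C$ cuts this down to $\Sel_2(E)$. This exhibits the numerator as the point count of a moduli stack $\mathcal{M}_{\mathcal{L}}$ carrying a Hitchin-type map to the stack of Weierstrass models of type $\mathcal{L}$, given by the $(I,J)$-invariant, with fibre $\Sel_2(E)$ over $[E]$; equivalently, one is counting torsors over $C$ under the quasi-finite group scheme $\mathcal{E}[2]$ subject to prescribed local conditions.

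The main term comes from stratifying $\mathcal{M}_{\mathcal{L}}$ by the factorization type of the binary quartic over $K$ (equivalently, by the \'etale $K$-algebra generated by its four roots). The reducible strata --- quartics with a rational factor --- always contain the canonical section representing $0\in\Sel_2(E)$, and, together with the strata carrying the $2$-torsion, they contribute the denominator up to a factor governed by $|E[2](K)|$, which is generically trivial: this is the ``$1$''. The irreducible stratum is the geometric analogue of the bulk of Bhargava--Shankar's fundamental domain; I would count maps $C\to\mathcal{M}_{\mathcal{L}}^{\mathrm{irr}}$ directly as sections of an \'etale-locally-trivial fibration, use Riemann--Roch to extract the leading power of $q$, and compare it with the leading power of $q$ in the denominator. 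The ratio of leading coefficients should be the constant $2$, so $\AS(\mathcal{L})\to 1+2=3$ as $\deg\mathcal{L}\to\infty$, up to error terms.

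To obtain the two explicit inequalities one controls the errors in opposite directions. For the upper bound one keeps all elliptic curves, in particular those whose minimal model is not transversal to the discriminant locus (Definition~\ref{transversal}); at a place $v$ of bad behaviour the number of admissible local quartic data is bounded by a convergent geometric series in $q_{v}^{-1}$, and summing these corrections over $|C|$ and over the two $\Gm$-twists (on the Weierstrass side and on the quartic side) produces a bound of the shape $3+T/(q-1)^{2}$ with $T$ depending only on the zeta function of $C$. For the lower bound one works inside a subfamily of curves cut out by clean local conditions --- the transversality of Definition~\ref{transversal} together with the further congruence conditions that make the count of everywhere-locally-soluble orbits exactly evaluable --- whose density among all curves of discriminant degree $12\deg\mathcal{L}$ is, by a sieve over the closed points of $C$, an explicit Euler product that one identifies with $\zeta_C(10)^{-1}$; discarding the complementary family and keeping this main contribution yields $\liminf\AS(\mathcal{L})\ge 3\zeta_C(10)^{-1}$.

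The main obstacle is the honest evaluation of the irreducible stratum: proving that the number of everywhere-locally-soluble irreducible $\PGL_2$-orbits of integral binary quartics with given invariants is, on average over the Weierstrass base, asymptotic to $2$ rather than merely $O(1)$. This is the geometry-of-numbers heart of Bhargava--Shankar, and in the function-field setting it becomes a point count on a moduli of sections whose fibres are torsors under $\mathcal{E}[2]$ ramified along the discriminant; controlling the contribution of sections of unexpected type --- where the ambient $\PGL_2$-torsor is nontrivial, or where one would need a nontrivial fundamental domain for reduction theory --- is exactly where a purity and support-theorem input for the Hitchin-type map (in the style of Ng\^{o}'s proof of the Fundamental Lemma) is required in order to pin down the main term rather than merely bound it. Secondary but essential is to make the errors along the non-transversal locus and at the places of bad reduction uniformly small, in particular to control the curves with nontrivial $2$-torsion (which is where the hypothesis $q>32$ enters), so that all errors tend to $0$ both as $\deg\mathcal{L}\to\infty$ and as $q\to\infty$.
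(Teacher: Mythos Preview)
Your framework is right in outline---binary quartics, the identification of the generic stabilizer with $E[2]$, the resulting Hitchin-type map from a moduli of $G$-twisted quartics down to the Weierstrass base, and the reduction of the numerator to a point count on $\mathcal{M}_\mathcal{L}$---and this is exactly how the paper sets things up. But the central counting step, where the constant $3=1+2$ is produced, proceeds quite differently from what you propose, and in particular does \emph{not} require any purity or support-theorem input.

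You stratify by the factorization type of the quartic over $K$ (reducible vs.\ irreducible) and identify the ``$2$'' as the average number of everywhere-locally-soluble irreducible orbits. The paper instead fibres $\mathcal{M}_\mathcal{L}$ over $\Bun_G$ and stratifies by the Harder--Narasimhan polygon of the underlying $\PGL_2$-bundle $\mathcal{E}$, measured by its unstable degree $n$ against $d=\deg\mathcal{L}$. The case $n=2d$ forces every regular section to factor through the Weierstrass section and contributes exactly~$1$; the range $d<n<2d$ is bounded crudely by a geometric series and gives the error $T/(q-1)^2$; the range $n>2d$ and a thin band near $n=d$ contribute~$0$; and the main range ($n<d-g-1$ or $\mathcal{E}$ semistable) contributes exactly~$2$, obtained by combining Riemann--Roch, a Poonen-type density computation showing that the regular locus has density $\zeta_C(2)^{-1}$, and the Tamagawa-number identity $|\Bun_G(\mathbb{F}_q)|=2q^{3(g-1)}\zeta_C(2)$. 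No orbit-by-orbit analysis of irreducible quartics, and no support theorem, is needed: the ``$2$'' is literally the Tamagawa number of $\PGL_2$.

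Two further points where your sketch drifts from the paper. First, the paper does not count $\Sel_2$ directly via locally-soluble integral orbits; it counts $H^1(C,E_\alpha[2])$-torsors (equivalently, all regular sections of $V(\mathcal{E},\mathcal{L})$) and then invokes the elementary inequality $|\Sel_2(E_\alpha)|\le|H^1(C,E_\alpha[2])|$ when $E_\alpha[2](K)=0$, handling the nontrivial-$2$-torsion locus separately (this is indeed where $q>32$ enters). Second, the factor $\zeta_C(10)^{-1}$ in the lower bound is not a transversality density: it is the density of \emph{minimal} Weierstrass pairs $(a,b)$ among all pairs, arising because $\AS(\mathcal{L})$ is normalized by minimal models while the transversal subfamily (for which the average is exactly~$3$ by the same Harder--Narasimhan count, the range $d<n<2d$ now being empty) is only a subset of those.
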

	
	From this theorem, we derive the following corollaries.
	\begin{cor} \label{cor:main}
		If we order elliptic curves over $K$ by height, then we have
		$$
			\limsup_{d\to \infty}	\AS (d) \leq 3 + \frac{T}{(q-1)^2},
		$$
		and
		$$
		  \liminf_{d\to \infty} \AS(d) \geq 3\zeta_C(10)^{-1}.
		$$
		In particular,
		$$
			\lim_{q\to \infty} \limsup_{d\to \infty} \AS (d) \leq 3,
		$$
		and
		$$ 
		  \lim_{q\to \infty} \liminf_{d\to \infty} \AS(d) \geq 3.
		$$
	\end{cor}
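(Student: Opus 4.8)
The plan is to deduce Corollary~\ref{cor:main} from Theorem~\ref{thm:main} by rewriting $\AS(d)$ as a weighted average of the numbers $\AS(\mathcal{L})$ over line bundles $\mathcal{L}$ of degree $\leq d$ and then running a Toeplitz/Ces\`{a}ro-type averaging argument. First I would stratify the sum defining $\AS(d)$ by the line bundle of the minimal Weierstrass model: by Theorem~\ref{thm:minimal_Weierstrass} and the definition of height, every elliptic curve $E_K$ over $K$ satisfies $h(E_K)=\deg\mathcal{L}_{E_K}\geq 0$, where $\mathcal{L}_{E_K}$ is the line bundle attached to the minimal Weierstrass model of $E_K$. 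For a line bundle $\mathcal{L}$ on $C$ write $N(\mathcal{L})=\sum|\Aut(E_K)|^{-1}$ and $S(\mathcal{L})=\sum|\Sel_2(E_K)|\,|\Aut(E_K)|^{-1}$, the sums running over the isomorphism classes of elliptic curves $E_K$ with $\mathcal{L}_{E_K}\cong\mathcal{L}$ --- finite sets, since all such curves have the same height --- so that $\AS(\mathcal{L})=S(\mathcal{L})/N(\mathcal{L})$ whenever $N(\mathcal{L})\neq 0$. Since $\operatorname{Pic}^n(C)(\mathbb{F}_q)$ is finite for each $n$, there are finitely many line bundles of any bounded degree, and the finite set of elliptic curves of height $\leq d$ is the disjoint union, over $\{\mathcal{L}:0\leq\deg\mathcal{L}\leq d\}$, of those with $\mathcal{L}_{E_K}\cong\mathcal{L}$. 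Writing $B_d=\sum_{\deg\mathcal{L}\leq d}N(\mathcal{L})$, this gives
\[
  \AS(d)=\frac{\sum_{\deg\mathcal{L}\leq d}S(\mathcal{L})}{B_d}=\frac{\sum_{\deg\mathcal{L}\leq d}\AS(\mathcal{L})\,N(\mathcal{L})}{B_d},
\]
a weighted average of the $\AS(\mathcal{L})$ with nonnegative weights $N(\mathcal{L})$ (omitting the terms with $N(\mathcal{L})=0$). Since $|\Sel_2(E_K)|\geq 1$ always, every $\AS(\mathcal{L})$ and every $\AS(d)$ is $\geq 1$; and $B_d\to\infty$ as $d\to\infty$, because $N(\mathcal{L})>0$ for $\deg\mathcal{L}$ large (which is already needed for Theorem~\ref{thm:main} to be non-vacuous).

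For the upper bound, put $L=\limsup_{\deg\mathcal{L}\to\infty}\AS(\mathcal{L})$, finite (and $\geq 1$) by Theorem~\ref{thm:main}. Given $\varepsilon>0$, choose $D_0$ with $\AS(\mathcal{L})\leq L+\varepsilon$ for all $\mathcal{L}$ with $\deg\mathcal{L}>D_0$. Splitting the weighted average at $D_0$: the terms with $\deg\mathcal{L}>D_0$ contribute at most $(L+\varepsilon)B_d^{-1}\sum_{D_0<\deg\mathcal{L}\leq d}N(\mathcal{L})\leq L+\varepsilon$, while the finitely many terms with $\deg\mathcal{L}\leq D_0$ contribute $M_0/B_d$ with $M_0=\sum_{\deg\mathcal{L}\leq D_0}S(\mathcal{L})$ a fixed finite constant, which tends to $0$ as $B_d\to\infty$. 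Hence $\limsup_{d\to\infty}\AS(d)\leq L+\varepsilon$ for every $\varepsilon>0$, so $\limsup_{d\to\infty}\AS(d)\leq L\leq 3+T/(q-1)^2$. The lower bound is symmetric: with $\ell=\liminf_{\deg\mathcal{L}\to\infty}\AS(\mathcal{L})\geq 3\zeta_C(10)^{-1}>0$, fix $0<\varepsilon<\ell$, choose $D_0$ with $\AS(\mathcal{L})\geq\ell-\varepsilon$ for $\deg\mathcal{L}>D_0$, and discard the nonnegative terms with $\deg\mathcal{L}\leq D_0$ to get $\AS(d)\geq(\ell-\varepsilon)B_d^{-1}\sum_{D_0<\deg\mathcal{L}\leq d}N(\mathcal{L})$; since $B_d^{-1}\sum_{D_0<\deg\mathcal{L}\leq d}N(\mathcal{L})\to 1$, this yields $\liminf_{d\to\infty}\AS(d)\geq\ell-\varepsilon$, hence $\liminf_{d\to\infty}\AS(d)\geq\ell\geq 3\zeta_C(10)^{-1}$.

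The two statements with $q\to\infty$ then follow by letting $q\to\infty$ in these inequalities: $T/(q-1)^2\to 0$, and, writing $\zeta_C(s)=P_C(q^{-s})/\bigl((1-q^{-s})(1-q^{1-s})\bigr)$ with $P_C$ the numerator polynomial of the zeta function (so $P_C(0)=1$), one has $\zeta_C(10)\to 1$ and therefore $3\zeta_C(10)^{-1}\to 3$; over families of curves $C$ of bounded genus this convergence is uniform, which is the regime in which the limit assertions are to be read. There is no substantial obstacle in this argument --- the real content sits in Theorem~\ref{thm:main} --- the only points meriting a little care being the finiteness inputs used to stratify the sum (finiteness of $\operatorname{Pic}^n(C)(\mathbb{F}_q)$ and of the set of elliptic curves of bounded height) and the divergence $B_d\to\infty$, both of which are elementary.
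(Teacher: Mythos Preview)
Your argument is correct and is essentially the same approach as the paper's: the paper's proof is the single line ``This is clear from theorem~\ref{thm:main}, noticing that $\lim_{n\to\infty}\zeta_{C\otimes\mathbb{F}_{q^n}}(10)=1$,'' and your Ces\`aro/weighted-average computation is precisely the content hidden behind the word ``clear.'' The one minor difference is in reading the $q\to\infty$ limit: the paper interprets it as base-changing a fixed $C$ to $\mathbb{F}_{q^n}$ (whence $\zeta_{C\otimes\mathbb{F}_{q^n}}(10)\to 1$ is immediate, and $T$, which by the proof of Case~3 in \S\ref{subsec:average_no_I-torsors} depends only on the genus, stays fixed), whereas you phrase it for families of curves of bounded genus; both readings yield the same conclusion.
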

	\begin{proof}
		This is clear from theorem~\ref{thm:main}, noticing that $\lim_{n\to \infty} \zeta_{C \otimes {{\mathbb F}_{q^n}}}(10) = 1$.
	\end{proof}
	
	\begin{cor} We have the following bounds for the average rank:
		$$
			\limsup_{d\to \infty} \AR(d) \leq \frac{3}{2} + \frac{T}{2(q-1)^2}.
		$$
		In particular,
		$$
			\lim_{q\to \infty} \limsup_{d\to \infty} \AR(d) \leq \frac{3}{2},
		$$
	\end{cor}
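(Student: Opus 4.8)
The plan is to bound the rank termwise by $\log_2$ of the size of the $2$-Selmer group and then quote Corollary~\ref{cor:main}. Recall first that for every elliptic curve $E_K$ over $K$ the Kummer descent sequence gives an injection $E_K(K)/2E_K(K) \hookrightarrow \Sel_2(E_K)$; since $E_K(K)$ is finitely generated of rank $\Rank(E_K)$, the group $(\mathbb{Z}/2\mathbb{Z})^{\Rank(E_K)}$ is a subquotient of $E_K(K)/2E_K(K)$, so
\[
  2^{\Rank(E_K)} \;\leq\; |E_K(K)/2E_K(K)| \;\leq\; |\Sel_2(E_K)|,
\]
i.e.\ $\Rank(E_K) \leq \log_2|\Sel_2(E_K)|$. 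Moreover $\Sel_2(E_K)$ is a finite-dimensional $\mathbb{F}_2$-vector space, being a subgroup of $H^1(K, E_K[2])$, so $|\Sel_2(E_K)|$ is a power of $2$.

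Next I would record the elementary inequality $\log_2 x \leq \tfrac{1}{2}x$, valid for every $x$ in the set $\{1,2,4,8,\dots\}$ of powers of $2$ (equality holds at $x=2$ and $x=4$, strict inequality otherwise; note that the inequality genuinely fails for real $x$ strictly between $2$ and $4$, which is why restricting to powers of $2$ is essential). Applying it with $x = |\Sel_2(E_K)|$ and combining with the bound above gives, for each $E_K$,
\[
  \frac{\Rank(E_K)}{|\Aut(E_K)|} \;\leq\; \frac{1}{2}\cdot\frac{|\Sel_2(E_K)|}{|\Aut(E_K)|}.
\]
Summing over all isomorphism classes of elliptic curves $E_K$ over $K$ with $h(E_K)\leq d$ and dividing by the finite positive quantity $\sum_{h(E_K)\leq d}|\Aut(E_K)|^{-1}$, we obtain $\AR(d)\leq \tfrac12\AS(d)$ for every $d$.

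It then remains only to pass to the limit: taking $\limsup_{d\to\infty}$ and inserting the first bound of Corollary~\ref{cor:main} yields
\[
  \limsup_{d\to\infty}\AR(d) \;\leq\; \frac{1}{2}\limsup_{d\to\infty}\AS(d) \;\leq\; \frac{3}{2} + \frac{T}{2(q-1)^2},
\]
and letting $q\to\infty$ kills the error term, giving the second assertion. There is no real obstacle beyond Theorem~\ref{thm:main}, on which this corollary is entirely parasitic; the only point that deserves a moment's care is the appeal to the power-of-$2$ structure of $\Sel_2(E_K)$, since without it the constant $\tfrac12$ — and hence the clean bound $\tfrac32$ — would have to be replaced by something larger.
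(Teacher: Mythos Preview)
Your proof is correct and is precisely the standard argument the paper has in mind when it says ``This is a direct consequence of corollary~\ref{cor:main}.'' The paper gives no further details, so you have simply spelled out the intended reasoning: $\Rank(E_K)\le \dim_{\mathbb{F}_2}\Sel_2(E_K)$ via Kummer descent, together with the elementary inequality $s\le \tfrac12\cdot 2^s$ for nonnegative integers $s$, yielding $\AR(d)\le \tfrac12\AS(d)$ and hence the stated bound.
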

	\begin{proof}
		This is a direct consequence of corollary~\ref{cor:main}.
	\end{proof}
	
	If we restrict ourselves to the case where $\Delta(E_K)$ square-free, then we get a better estimate for the average size of the 2-Selmer groups, and hence, also for the average rank. For the sake of brevity, we add the superscript $sf$ to $\AS^{sf}(d)$, $\AR^{sf}(d)$, $\AS^{sf}(\mathcal{L})$ and $\AR^{sf}(\mathcal{L})$ to mean that we restrict the range to the cases where $\Delta(E_K)$ is square-free.
		
	\begin{thm} \label{thm:transversal}
		When we restrict ourselves to the square-free range, then
		$$
			\lim_{\deg\mathcal{L} \to \infty} \AS^{sf}(\mathcal{L}) = 3,
		$$
		and hence
		$$
			\lim_{d\to \infty} \AS^{sf}(d) = 3,
		$$
		and
		$$
			\lim_{d\to \infty} \AR^{sf}(d) \leq \frac{3}{2}.
		$$
	\end{thm}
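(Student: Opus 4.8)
The plan is to recast $\AS^{sf}(\mathcal L)$ as a ratio of stacky point counts over the curve $C$ and to evaluate its limit by combining the Grothendieck--Lefschetz trace formula with a sieve. Let $\mathcal A^{sf}_{\mathcal L}$ denote the moduli stack of morphisms $h\colon C_{k'}\to[\mathbb A^2/\Gm]$ with $h^{*}\mathcal L_{uni}\cong\mathcal L$ that are transversal to the discriminant locus (Definition~\ref{transversal}); by Proposition~\ref{prop:weierstrass_form_in_family} this is an open substack of $\bigl[(H^0(C,\mathcal L^{\otimes 4})\oplus H^0(C,\mathcal L^{\otimes 6}))/\Gm\bigr]$, whose $\mathbb F_q$-points weighted by $1/\#\Aut$ are exactly the terms appearing in the denominator defining $\AS^{sf}(\mathcal L)$. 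Over $\mathcal A^{sf}_{\mathcal L}$ I would construct, using the $\PGL_2$-formulation of $2$-descent on binary quartics --- a coregular setting whose invariant map lands in $[\mathbb A^2/\Gm]$ --- and its version in families over $C$, a stack $\mathcal M^{sf}_{\mathcal L}$ parametrizing such quartic data over $C$ that are everywhere locally soluble, together with a morphism $\mathcal M^{sf}_{\mathcal L}\to\mathcal A^{sf}_{\mathcal L}$ whose fibre over a Weierstrass curve $E$ is canonically $\Sel_2(E)$. Then $\AS^{sf}(\mathcal L)=\#\mathcal M^{sf}_{\mathcal L}(\mathbb F_q)/\#\mathcal A^{sf}_{\mathcal L}(\mathbb F_q)$ for the stacky counts, the morphism is quasi-finite, and both stacks have dimension $10\deg\mathcal L+O_C(1)$ for $\deg\mathcal L$ large by Riemann--Roch.

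I would then exploit that in the square-free (transversal) range every degeneration is as mild as possible. Since $\mathrm{div}(h_E^{*}\Delta)$ is reduced, $E\to C$ has only nodal fibres, the N\'eron-model $2$-torsion $\mathcal E[2]$ is a quasi-finite flat group scheme of order $4$ over the complement of $\mathrm{div}(\Delta)$ and of order $2$ over it, and the degree-$3$ cover $\widetilde C\to C$ given by the nonzero $2$-torsion of $E$ (cut out inside a line bundle by $x^{3}+ax+b$) is a smooth curve with only simple branch points. In this situation everywhere-local-solubility is vacuous at places of good reduction and is a single explicit condition at each nodal place, so $\mathcal M^{sf}_{\mathcal L}$ is a smooth Deligne--Mumford stack of dimension $\dim\mathcal A^{sf}_{\mathcal L}$, and the fibre of $\mathcal M^{sf}_{\mathcal L}\to\mathcal A^{sf}_{\mathcal L}$ over $E$ --- namely $\Sel_2(E)$ --- is controlled by the $2$-torsion of the Prym variety $\mathrm{Prym}(\widetilde C/C)$, subject to the local conditions. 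This is the geometric analogue, in the spirit of the Hitchin fibration used in the proof of the Fundamental Lemma, of the fact that the spectral/Prym description keeps the fibres smooth over the transversal base; in particular it yields a product formula expressing $\#\Sel_2(E)$ as a product of local factors attached to the places of $C$.

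The count is then organized as a sieve. Plugging the product formula into $\#\mathcal M^{sf}_{\mathcal L}(\mathbb F_q)=\sum_{E}\#\Sel_2(E)$ (stacky sum over $E\in\mathcal A^{sf}_{\mathcal L}(\mathbb F_q)$) and summing over $E$ place by place yields an identity of the shape
\[
  \AS^{sf}(\mathcal L)=\Bigl(\prod_{v\in|C|}\tau_v\Bigr)\bigl(1+o(1)\bigr)\qquad(\deg\mathcal L\to\infty),
\]
where $\tau_v$ is a local density --- a rational function of $q_v=q^{\deg v}$, equal to $1$ at places of good reduction and to an explicit twisted factor at the nodal places --- read off from the $\PGL_2$-orbit count of binary quartics over $K_v$. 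A direct local computation then shows $\prod_{v\in|C|}\tau_v=\sum_{d\mid 2}d=3$, the function-field shadow of the local density computation of Bhargava and Shankar. I expect the real difficulty to lie in the sieve step: one must show that quartic data ``concentrated over'' closed points of large degree, or over loci where the discriminant would fail to be reduced, contribute negligibly, uniformly as $\deg\mathcal L\to\infty$. This amounts to controlling the cohomology of $\mathcal M^{sf}_{\mathcal L}$ --- its Betti numbers and the weights occurring in it --- uniformly in $\mathcal L$, which is precisely what a support-type theorem for the Hitchin-type fibration $\mathcal M^{sf}_{\mathcal L}\to\mathcal A^{sf}_{\mathcal L}$ supplies; this is the technical heart of the method.

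The two remaining assertions follow formally. The stack of elliptic curves with minimal line bundle $\mathcal L$ has about $q^{10\deg\mathcal L}$ many $\mathbb F_q$-points in the stacky sense, and there are only $\#\mathrm{Pic}^{0}(C)(\mathbb F_q)$ line bundles of each degree, so in $\AS^{sf}(d)$ the sum over $h(E_K)\le d$ is dominated by the terms with $\deg\mathcal L$ near $d$; together with the convergence $\AS^{sf}(\mathcal L)\to3$, made uniform in $\mathcal L$ by the error control above, an elementary partial-summation argument gives $\lim_{d\to\infty}\AS^{sf}(d)=3$. For the rank, the Kummer sequence provides an injection $E(K)/2E(K)\hookrightarrow\Sel_2(E_K)$, and the Mordell--Weil theorem then gives $\Rank(E_K)\le\dim_{\mathbb F_2}\Sel_2(E_K)=:s(E_K)$; since $s\le 2^{\,s-1}$ for every integer $s\ge 0$, we obtain $\AR^{sf}(d)\le\tfrac12\AS^{sf}(d)$, whence $\limsup_{d\to\infty}\AR^{sf}(d)\le\tfrac32$.
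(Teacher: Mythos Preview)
Your proposal is a plausible research plan, but it is not a proof, and it diverges substantially from what the paper actually does. Several of your key steps are only asserted (``I would construct\ldots'', ``I expect\ldots''), and the machinery you invoke --- Prym varieties, support-type theorems, Grothendieck--Lefschetz with uniform weight/Betti control --- is neither supplied nor needed. In particular, the claim that the answer $3$ arises as a product of local densities $\prod_v \tau_v = \sum_{d\mid 2} d$ does not match how the constant is produced here: the paper obtains $3 = 1 + 2$ from two entirely global sources, namely the Weierstrass section and the Tamagawa number of $\PGL_2$.

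Concretely, the paper's argument runs as follows. Using the isomorphism $[V^{\reg}/G]\cong B E_S[2]$, one rewrites $|\mathcal M_{\mathcal L}(k)|$ as a count of regular sections of the rank-five bundles $V(\mathcal E,\mathcal L)$, summed over $G$-bundles $\mathcal E$ weighted by $1/|\Aut\mathcal E|$, and stratifies $\Bun_G$ by the Harder--Narasimhan unstable degree $n$. For $n>2d$ there are no regular sections; for $n=2d$ every regular section is $G$-equivalent to the Weierstrass section, contributing exactly $1$; the range $d-g-1\le n\le d$ is negligible; and the semistable/low-$n$ range contributes exactly $2$ via $|\Bun_G(\mathbb F_q)| = 2 q^{3(g-1)}\zeta_C(2)$ together with the Poonen-type density computation that the regular locus has density $\zeta_C(2)^{-1}$ (the transversal densities of Propositions~\ref{prop:density_(a,b)_transversal} and~\ref{prop:density_V_reg_transversal} cancel in numerator and denominator). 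The only remaining range is $d<n<2d$, which in the general theorem produced the error term $T/(q-1)^2$; the point of the square-free case is a short lemma: if $d<n<2d$ then any section has $c_4=0$ and $c_3$ vanishing somewhere, forcing $\Delta$ to vanish to order $\ge 2$ there, so this range is \emph{empty} under the transversality constraint. Combined with Proposition~\ref{prop:transversal_Sel_torsor}, which says $H^1(C,E_\alpha[2])\to\Sel_2(E_\alpha)$ is an isomorphism in the transversal case (so no ``everywhere-locally-soluble'' condition needs to be imposed separately), this yields the exact limit $3$.

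So the genuine gap in your write-up is twofold: you never identify the mechanism that kills the error term in the square-free range (the elementary observation that unstable degree $d<n<2d$ forces a repeated factor in $\Delta$), and you replace the direct, elementary count --- Harder--Narasimhan plus Riemann--Roch plus the Tamagawa number --- by an unproved appeal to support theorems and cohomological uniformity. Your derivation of the last two displayed statements from the first is fine.
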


The rest of the paper will be devoted to the proofs of theorems~\ref{thm:main} and~\ref{thm:transversal}. The main strategy to our counting problem is the introduction of a morphism of stacks $\mathcal{M}_\mathcal{L} \to \mathcal{A}_\mathcal{L}$ parametrized by line bundles $\mathcal{L}$ on $C$, and calculate the limit  of the ratio of masses 
$$ |\mathcal M_{\mathcal L}(k)|/ |\mathcal{A}_\mathcal{L}(k)| $$
as $\deg(\mathcal L)\to\infty $. This geometric situation will be set up in subsection \ref{subsec:geometric_setup} after some necessary preparations.

	\section{Invariant theory of binary quartic forms}
	
	\subsection{Invariants}
	Let $V = \Spec k[c_0,c_1,c_2,c_3,c_4]$ be the space of binary quartic forms with coefficients $c_0,c_1,c_2,c_3,c_4$, i.e. a point $f\in V(k)$ can be written as
\[
		f(x, y) = c_0x^4 + c_1x^3y + c_2x^2y^2 + c_3xy^3 + c_4y^4.
\]
We can view $V$ as a representation of $\GL_2$ by identifying $V$ with $\Sym^4 \std \otimes \det^{-2}$, where $\std$ stands for the standard representation of $\GL_2$. The center of $\GL_2$ acts trivially on $V$, which makes this into a representation of $G=\PGL_2$. From the classical theory of invariants, we know that the GIT quotient $\geoquot{V}{G}$ of $V$ is isomorphic to $S = \Spec k[a, b]$, where
	\begin{align*}
		a &= -\frac{1}{3} (12c_0 c_4 - 3c_1 c_3 + c_2^2),\\
		b &= -\frac{1}{27} (72c_0c_2c_4 + 9c_2c_3c_4 - 27c_0c_3^2 - 27c_4c_1^2 - 2c_2^3),
	\end{align*}
and we denote $\pi: V\to S$ the quotient map. The discriminant
	\[
	   \Delta(f) = -(4a^3+27b^2)
	\]
defines regular functions on $V$ and $S$.
	
We also have a linear action of $\Gm$ on $V$ and $S$ compatible with $\pi$ and with the $G$-action defined as follows
\[ 
		c\cdot f = c^2 f \qquad \text{and}\qquad c\cdot(a,b) = (c^4 a, c^6 b). \teq \label{Gm-action}
\]
These relations induce a natural morphism of quotient stacks $\pi: [V/G\times \Gm] \to [S/\Gm]$. We also have the relation:
\[
  c\cdot \Delta=c^{12} \Delta
\]
which implies that $\Delta$ defines a divisor on $[S/\Gm]$.

The quotient map $\pi$ admits a section $s$ given by
\[
		s(a, b) = y(x^3+axy^2 +by^3), \teq \label{Weierstrass-section}
\]
which we will call the Weierstrass section. In fact, this section can be extended to a map $S\times \Gm \to V\times G\times \Gm$ compatible with all the actions involved
	$$
		s((a, b), c) = \left(y(x^3+axy^2+by^3), \mtrix{1 & 0 \\ 0 & c^2}, c\right).
	$$
This section induces a section on the level of quotient stacks:
$$[S/\Gm] \to [V/ G\times \Gm]$$	
also to be called the Weierstrass section.
	
	\subsection{Stable orbits}
We will now investigate the orbits and stabilizers of the action of $G$ on the space of binary quartic forms. A non-zero binary quartic form $f \in V(\lbar{k})$ can be written in the following form:
	\[
		f(x, y) = \prod_{i=1}^4 \, (a_ix + b_i y), \qquad a_i, b_i \in \lbar{k}.
	\]
Based on multiplicity of its zeros, a non-zero binary quartic form $f$ can be assigned one of the following types:
	\[
		(1, 1, 1, 1), \quad (1, 1, 2), \quad (1, 3), \quad	(2, 2), \quad (4).
	\]
For instance, type $(1, 1, 1, 1)$ includes those binary quartic forms with no multiple root, while type $(1, 1, 2)$ includes those with exactly one double root, and so on. It is clear that if two geometric points $f, g\in V(\lbar{k})-\{0\}$ are conjugate, then they have the same type and also have the same invariants $a$ and $b$. The converse is also true.
	
\begin{prop} \label{prop:orbits_on_fibers}
	In each geometric fiber of $\pi: V\to S$, 	$G$ acts transitively on the set of geometric points of a given type. In other words, if $f, g\in V(\lbar{k})-\{0\}$ have the same invariants $a$ and $b$, and are of the same type, then there exists an element of $h\in G(\lbar{k})$ such that $hf=g$.

  Let $(a, b) \in \bar{k}^2$ be a geometric point of $S$. Then the geometric fiber $V_{(a, b)} = \pi^{-1}(a, b)$ has the following descriptions:
  \begin{enumerate}[\quad (i)]
		\setlength{\itemsep}{0pt}
		\setlength{\parskip}{0pt}
    \item  If $\Delta(a, b)\neq 0$, $V_{(a, b)}$ has precisely one orbit, and it is of type $(1, 1, 1, 1)$.
    \item  If $\Delta(a, b) = 0$ but $(a, b)\neq (0, 0)$, $V_{(a, b)}$ has two orbits, which are of types $(1, 1, 2)$ and $(2, 2)$.
    \item  Finally, $V_{(0, 0)}$ has three orbits, which are of types $(1, 3), (4)$ and $f = 0$.
  \end{enumerate}
\end{prop}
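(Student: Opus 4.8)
The excerpt already records that conjugate forms have the same type and the same invariants $(a,b)=\pi(f)$, so what remains is the converse of the first assertion and then reading off which types occur over which points of $S$. Two preliminary reductions: first, since we work over $\lbar{k}$, every element of $\PGL_2(\lbar{k})$ lifts to $\mathrm{SL}_2(\lbar{k})$ (rescale a lift so that its determinant becomes $1$, using that $\lbar{k}$ is algebraically closed), so it suffices to find the conjugating element in $\mathrm{SL}_2(\lbar{k})$; the gain is that on $\mathrm{SL}_2$ the twist $\det^{-2}$ is trivial, so $\mathrm{SL}_2$ sends a product of linear forms to a product of linear forms with no extra scalar, i.e. it permutes the roots of a quartic without altering its normalization. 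Second, classically $\Delta(\pi(f))\neq 0$ if and only if $f$ has four distinct roots (it is, up to a nonzero constant, the discriminant of the binary quartic $f$; here $\car k\neq 2,3$), so $\Delta(a,b)\neq 0$ exactly means every $f\in V_{(a,b)}$ is of type $(1,1,1,1)$, and moreover $0\notin V_{(a,b)}$ then.

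\emph{Transitivity within a fixed type.} Fix nonzero $f,g$ of the same type with $\pi(f)=\pi(g)=(a,b)$; for definiteness take $g=s(a,b)=y(x^3+axy^2+by^3)$, the Weierstrass section. If the type is $(1,3)$, $(2,2)$ or $(1,1,2)$ — at most three distinct roots — sharp $3$-transitivity of $\PGL_2(\lbar{k})$ on $\mathbb{P}^1(\lbar{k})$ carries the root configuration of $f$ onto that of $g$, so a lift $\tilde{h}\in\mathrm{SL}_2(\lbar{k})$ gives $\tilde{h}\cdot f=\mu g$ for some $\mu\in\lbar{k}^\times$. If the type is $(1,1,1,1)$ then $\Delta(a,b)\neq 0$, so $g=s(a,b)$ also has four distinct roots (those of $z^3+az+b$ together with $\infty$); since two configurations of four distinct points on $\mathbb{P}^1$ are $\PGL_2(\lbar{k})$-equivalent if and only if they have the same $j$-invariant, and classically this $j$ equals $-6912\,a^3/\Delta$, a function of $(a,b)$ alone, we again obtain $\tilde{h}\cdot f=\mu g$. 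In every case, comparing invariants gives $a=\mu^2a$ and $b=\mu^3b$. If $a\neq 0\neq b$ this forces $\mu=1$ — which already settles types $(1,1,2)$ and $(2,2)$ completely, since on $\{\Delta=0\}$ one has $a=0\iff b=0$, so there $(a,b)\neq(0,0)$ makes both coordinates nonzero. The remaining cases are absorbed by automorphisms of $g$ inside $\mathrm{SL}_2(\lbar{k})$: if $a=b=0$ (types $(1,3)$ and $(4)$), the torus element $\mathrm{diag}(t,t^{-1})$ scales the normal forms $xy^3$ and $x^4$ by $t^{-2}$ and $t^4$, and every element of $\lbar{k}^\times$ is both a square and a fourth power; if $a\neq 0$, $b=0$ (the harmonic quartic, $j=1728$), an order-four element of $\mathrm{SL}_2(\lbar{k})$ cyclically permuting the roots $\{0,\infty,\pm\sqrt{-a}\}$ of $s(a,0)$ acts on it by $-1$; if $a=0$, $b\neq 0$ (the equianharmonic quartic, $j=0$), the element $\mathrm{diag}(\zeta,\zeta^{-1})$ with $\zeta$ a primitive cube root of unity permutes the roots of $s(0,b)$ and scales it by $\zeta^{-1}$. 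Modifying $\tilde{h}$ by such an element yields $\tilde{h}\cdot f=g$; the form $0$ is its own orbit. This proves the first assertion. (The last two special cases are exactly where $\car k\neq 2,3$ is needed.)

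\emph{Which types occur over which points of $S$.} A quick computation of invariants of normal forms gives: $xy^3$ (type $(1,3)$) and $x^4$ (type $(4)$) both have $(a,b)=(0,0)$; $xy^2(x-y)$ (type $(1,1,2)$) has $a=-1/3\neq 0$; and each $\lambda x^2y^2$ with $\lambda\neq 0$ (type $(2,2)$) has $(a,b)=(-\lambda^2/3,\,2\lambda^3/27)$, so $\Delta=0$ but $(a,b)\neq(0,0)$. Combined with the preliminary remark this gives (i): when $\Delta(a,b)\neq 0$ the fiber $V_{(a,b)}$ consists only of type $(1,1,1,1)$, hence is a single orbit by transitivity. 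For (ii), take $(a,b)$ with $\Delta(a,b)=0$ and $(a,b)\neq(0,0)$, so $a\neq 0\neq b$: the section $s(a,b)$ has type $(1,1,2)$ since the cubic $z^3+az+b$ then has a double and a simple root (both finite), while solving $\lambda^2=-3a$ and $\lambda^3=27b/2$ — compatible precisely because $\Delta(a,b)=0$ — produces $\lambda x^2y^2$ of type $(2,2)$ with these invariants; since no form of any other type lies over such a point, transitivity yields exactly these two orbits. For (iii), over $(a,b)=(0,0)$ the forms $xy^3$, $x^4$ and $0$ realize the three listed types, and the computation above rules out types $(1,1,2)$ and $(2,2)$ there; transitivity then gives exactly three orbits.

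\emph{Main obstacle.} Everything above is bookkeeping except the type $(1,1,1,1)$ case of transitivity, namely that equality of the invariants forces the two four-point configurations into a common $\PGL_2(\lbar{k})$-orbit. This rests on the classical identification of the space of four unordered points on $\mathbb{P}^1$ modulo $\PGL_2$ with the $j$-line, together with the classical formula $j=-6912\,a^3/\Delta$ for binary quartics. Alternatively, for part (i) one can invoke geometric invariant theory directly: a binary quartic is $\PGL_2$-stable exactly when it has four distinct roots, and the GIT quotient identifies the stable locus modulo $\PGL_2$ with the open subscheme $\{\Delta\neq 0\}\subseteq S$.
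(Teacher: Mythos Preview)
Your proof is correct, but it takes a noticeably more circuitous route than the paper's, especially for type $(1,1,1,1)$. The paper handles all three \emph{stable} types uniformly via an elementary normalization (stated as a separate lemma, Proposition~\ref{prop:stable-orbit}, just before this one): given any $f$ with at least one simple zero, move that zero to $\infty$ so that $f = y(c_0 x^3 + c_1 x^2 y + c_2 xy^2 + c_3 y^3)$ with $c_0 \neq 0$; then use the upper-triangular Borel, which stabilizes $\infty$, to depress the cubic to $x^3 + axy^2 + by^3$, and check directly that $(a,b) = (a(f), b(f))$. This produces $hf = s(a(f), b(f))$ with no appeal to cross-ratios, the $j$-invariant, or the extra automorphisms at $j = 0, 1728$. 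For types $(2,2)$ and $(4)$ the paper moves the repeated roots to $\{0,\infty\}$ (resp.\ to $\infty$) and then argues essentially as you do: for $(2,2)$ one lands on $cx^2y^2$ and observes that $(a,b) = (-c^2/3,\,2c^3/27)$ determines $c$; for $(4)$ one lands on $cy^4$ and scales by a diagonal matrix.

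What your approach buys is an explicit view of the special automorphisms at $j=0$ and $j=1728$, which is conceptually pleasant; what it costs is importing a nontrivial classical fact (that the $j$-invariant is a complete $\PGL_2$-invariant of unordered $4$-tuples and agrees with $-6912\,a^3/\Delta$) and then patching those two special fibers by hand. The paper's Borel normalization is entirely self-contained and treats the stable types uniformly, which is why it is preferred here. One small presentational point: your phrase ``for definiteness take $g = s(a,b)$'' only makes sense for the stable types, since the Weierstrass section always has a simple zero at $\infty$ and so is never of type $(2,2)$ or $(4)$; your argument does not actually rely on that choice, so the slip is harmless, but it is worth rephrasing.
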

	
A non-zero binary quartic form $f\in V(\bar k)$ is said to be {\em stable} if it has at least one single zero, or in other words if it is of one of the types $(1,1,1,1),(1,1,2)$ or $(1,3)$. We will first treat the stable case.	
	
\begin{prop}\label{prop:stable-orbit}
Let $f\in V(\bar k)$ be a stable binary quartic form. Then there exists $h\in G(\bar k)$ such that 
\[
  hf=y(x^3+axy^2+by^3)
\]
where $a=a(f)$ and $b=b(f)$.
\end{prop}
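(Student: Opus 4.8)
The plan is to deduce the statement from Proposition~\ref{prop:orbits_on_fibers}, which already classifies the $G$-orbits in each geometric fiber of $\pi\colon V\to S$ by type. The point is that the Weierstrass form $g := s(a,b) = y(x^3+axy^2+by^3)$ is \emph{itself} always stable and has invariants exactly $(a,b)$; granting this, any stable $f$ with $a=a(f)$, $b=b(f)$ lies in the same fiber $V_{(a,b)}$ as $g$, and since that fiber contains a unique orbit consisting of stable forms, $f$ and $g$ must be $G(\bar k)$-conjugate.

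Concretely, I would argue as follows. First, $g = y\cdot(x^3+axy^2+by^3)$ vanishes at the point $[1:0]$ (where $y=0$) with multiplicity exactly one, since the cubic factor $x^3+axy^2+by^3$ takes the value $x^3\neq 0$ there; hence $g$ has a simple zero and is stable for every $(a,b)\in\bar k^2$. Next, by the very construction of the Weierstrass section one has $\pi\circ s=\mathrm{id}$, so $a(g)=a$ and $b(g)=b$, i.e. $g\in V_{(a,b)}$. Now take $f\in V(\bar k)$ stable with $a=a(f)$, $b=b(f)$; then $f$ and $g$ both lie in $V_{(a,b)}$. Inspecting the three cases of Proposition~\ref{prop:orbits_on_fibers}: if $\Delta(a,b)\neq 0$ the only type present is $(1,1,1,1)$; if $\Delta(a,b)=0$ and $(a,b)\neq(0,0)$ the only \emph{stable} type present is $(1,1,2)$ (the other orbit has the non-stable type $(2,2)$); and if $(a,b)=(0,0)$ the only stable type present is $(1,3)$. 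In each case $V_{(a,b)}$ contains exactly one orbit of stable forms, so $f$ and $g$ — both stable and both in $V_{(a,b)}$ — lie in it, giving $h\in G(\bar k)$ with $hf=g=y(x^3+axy^2+by^3)$.

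The only content beyond this bookkeeping is the elementary check that $g$ is stable (a one-line evaluation of the cubic factor at $[1:0]$) and the identity $\pi\circ s=\mathrm{id}$ (part of the definition of $s$, amounting to substituting the Weierstrass form into the formulas for $a$ and $b$), so I do not anticipate a genuine obstacle here — the real work, the orbit classification fiberwise, is already done in Proposition~\ref{prop:orbits_on_fibers}. One could instead give a direct normalization: use $G(\bar k)$ to move a chosen simple zero of $f$ to $[1:0]$, which forces the $y^4$-coefficient to vanish and the $xy^3$-coefficient to be nonzero, then rescale and complete the cube on the residual binary cubic; but this merely re-proves a special case of Proposition~\ref{prop:orbits_on_fibers}, so I would keep the short route.
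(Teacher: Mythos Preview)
Your argument is formally valid \emph{if} Proposition~\ref{prop:orbits_on_fibers} were already available, but in the paper's logical structure it is not: the proof of Proposition~\ref{prop:orbits_on_fibers} is deferred until after Proposition~\ref{prop:stable-orbit}, and its very first sentence reads ``The case of stable orbits is already settled by proposition~\ref{prop:stable-orbit}.'' So the fiberwise orbit classification you want to invoke is, for the stable types, a \emph{consequence} of the statement you are trying to prove, and your short route is circular.

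Ironically, the ``direct normalization'' you sketch in your last paragraph and then dismiss is exactly the paper's proof: move a simple zero of $f$ to $\infty=[1:0]$ so that $f=y(c_0x^3+c_1x^2y+c_2xy^2+c_3y^3)$ with $c_0\neq 0$, then use the upper-triangular subgroup of $G$ (which fixes $\infty$) to scale $c_0$ to $1$ and kill the $x^2y$-term (this is where $\operatorname{char} k\neq 3$ enters), arriving at $y(x^3+axy^2+by^3)$; finally one checks on this normal form that the coefficients agree with the invariants $a(f),b(f)$. That direct argument is the logically prior fact here, and the orbit description in Proposition~\ref{prop:orbits_on_fibers} is built on top of it, not the other way around.
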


\begin{proof}
Let ${\mathbb P}^1$ be the projective line with projective coordinate $[x:y]$, where $\infty$ is defined by the equation $y=0$. By conjugation, we can assume that $f$ has a single zero at $\infty$. In other words, it has the form
\[
  f=y(c_0 x^3+c_1 x^2 y + c_2 xy^2 + c_3 y^3)
\]
with $c_0\in\bar k^\times$ and $c_1,c_2,c_3\in \bar k$. The subgroup of upper triangular matrix in $G$ stabilizes $\infty\in \mathbb P^1$. Its action allows us to bring the cubic factor into the form $x^3+axy^2+by^3$ provided that $\car k\neq 3$. We can then check that $a=a(f)$ and $b=b(f)$ on the form $y(x^3+axy^2+by^3)$.  
\end{proof}
	
\begin{proof} (of proposition \ref{prop:orbits_on_fibers}) The case of stable orbits is  already settled by proposition \ref{prop:stable-orbit}. Indeed, since any stable binary quartic form $f$ of invariant $(a,b)$ is conjugate to the polynomial  $y(x^3+axy^2+by^3)$, two stable binary quartic forms of the same invariant  $(a,b)$ are conjugate. Also $\Delta(a,b)\neq 0$ if and only if the cubic  polynomial $x^3+ax^2+b$ have three distinct zeros. If $\Delta(a,b)=0$, it has at least a double zero, and furthermore, it has a triple zero if and only if $(a,b)=(0, 0)$.

We next consider the case of a quartic form $f$ type $(2,2)$. By using the action of $G$ we can assume that $f$ has double zeros at $0$ and $\infty$. In other words, $f$ is of the form $f=cx^2 y^2$ with $c\neq 0$. We observe that in this case, the invariants $a(f)=-c^2/3$ and $b(f)=2c^3/27$ completely determine $c$, and hence $f$, assuming that the characteristic of $k$ is not $2$ nor $3$. 

We finally consider the case of a quartic form $f$ of type $(4)$. By using the action of $G$ we can assume that $f$ has quadruple zero at $\infty$. In other  words, $f$ is of the form $f=cy^4$ with $c\neq 0$. It is then easy to exhibit a diagonal two by two matrix $h$ such that $hf=y^4$.
\end{proof}

Let $I$ be the universal stabilizer of the action of $G$ on $V$, that is
\[ 
    I = (G\times_S V) \times_{V\times_S V} V, \teq \label{stabilizer}
\]
where $G\times_S V \to V\times_S V$ is defined by $(g, v) \mapsto (v, gv)$ and 
$V\to V\times_S V$ is the diagonal map. This is a group scheme over $V$ 
whose Lie algebra can be described as follows.	
	
	\begin{prop} \label{prop:infinitesimal_stab}
	The infinitesimal stabilizers of the action of $\mathfrak{g}= \Lie(G)$ on $V$ are as follows:
	\begin{enumerate}[\quad (i)]
		\setlength{\itemsep}{0pt}
		\setlength{\parskip}{0pt}
		\item Trivial for points of stable types $(1,1,1,1)$, $(1,1,2)$ and $(1,3)$,
		\item One-dimensional for points of types $(2,2)$ and $(4)$,
		\item All of $\mathfrak{g}$ for the point $f=0$.
	\end{enumerate}
	\end{prop}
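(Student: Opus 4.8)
The plan is to reduce the statement to a short explicit computation with differential operators. The infinitesimal stabilizer at a point $f \in V$ is the kernel $\mathfrak{i}_f = \ker\bigl(\mathfrak{g} \to T_f V,\ X \mapsto X\cdot f\bigr)$, and since conjugation by $g \in G$ carries $\mathfrak{i}_f$ isomorphically onto $\mathfrak{i}_{gf}$, it is enough to compute $\mathfrak{i}_f$ on a single representative of each orbit; the explicit representatives produced in the proofs of Propositions~\ref{prop:orbits_on_fibers} and~\ref{prop:stable-orbit} are exactly what is needed. Since $\car k \neq 2$, we have $\mathfrak{g} = \Lie(\PGL_2) \cong \mathfrak{sl}_2$, and as the twist by $\det^{-2}$ is invisible at the level of $\mathfrak{sl}_2$ (its elements are traceless), $\mathfrak{g}$ acts on $V = \Sym^4 \std \otimes \det^{-2}$ through the span of the derivations $x\,\partial_y$, $y\,\partial_x$ and $x\,\partial_x - y\,\partial_y$ acting on binary quartic forms. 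Writing a general element of $\mathfrak{g}$ as $X = \alpha\, x\partial_y + \beta\, y\partial_x + \gamma(x\partial_x - y\partial_y)$, the equation $X\cdot f = 0$ becomes a linear system in $(\alpha,\beta,\gamma)$, obtained by setting the five coefficients of the quartic $X\cdot f$ to zero.

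For the three stable types it suffices, by Proposition~\ref{prop:stable-orbit}, to take $f = y(x^3 + axy^2 + by^3)$ for arbitrary $(a,b)$. A direct computation shows that the $x^4$-coefficient of $X\cdot f$ is $\alpha$, its $x^3y$-coefficient is $2\gamma$, and its $x^2y^2$-coefficient is $3(a\alpha + \beta)$; hence $\alpha = \gamma = 0$ and then $\beta = 0$ (this is where $\car k \neq 3$ enters), so $\mathfrak{i}_f = 0$. This disposes of types $(1,1,1,1)$, $(1,1,2)$ and $(1,3)$ simultaneously. For type $(2,2)$ take $f = x^2y^2$: then $(x\partial_x - y\partial_y)\cdot f = 0$, while $x\partial_y\cdot f = 2x^3y$ and $y\partial_x\cdot f = 2xy^3$ are linearly independent, so $\mathfrak{i}_f$ is the one-dimensional space spanned by $x\partial_x - y\partial_y$. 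For type $(4)$ take $f = y^4$: then $y\partial_x\cdot f = 0$, while $x\partial_y\cdot f = 4xy^3$ and $(x\partial_x - y\partial_y)\cdot f = -4y^4$ are independent, so $\mathfrak{i}_f$ is the one-dimensional line spanned by $y\partial_x$. Finally $f = 0$ is annihilated by all of $\mathfrak{g}$.

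No real obstacle arises: the argument is bookkeeping once the two preliminary observations are in place (the identification $\mathfrak{g}\cong\mathfrak{sl}_2$, valid as $\car k\neq 2$, and the vanishing of the $\det^{-2}$-contribution on $\mathfrak{sl}_2$). The one subtlety worth flagging is that the hypotheses $\car k \neq 2, 3$ are genuinely used — $\car k \neq 3$ appears in the coefficient $3(a\alpha + \beta)$, mirroring precisely its use in Proposition~\ref{prop:stable-orbit} — and for the stable locus one could alternatively invoke the finiteness of the stabilizer in $\PGL_2$ of four distinct points of $\mathbb{P}^1$.
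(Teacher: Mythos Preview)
Your proof is correct. The approach is dual to the paper's: you fix a representative $f$ for each orbit type and compute the annihilator $\{X\in\mathfrak{sl}_2 : X\cdot f=0\}$ directly as a linear system in the coefficients of $X$, whereas the paper fixes a nonzero $X\in\mathfrak{sl}_2$, classifies it by Jordan type (regular semisimple or regular nilpotent), conjugates $X$ to a standard form, and then reads off which $f$ it annihilates (multiples of $x^2y^2$ in the semisimple case, multiples of $y^4$ in the nilpotent case). Your route has the advantage of handling all three stable types in one stroke via the Weierstrass representative $y(x^3+axy^2+by^3)$ from Proposition~\ref{prop:stable-orbit}, and it makes the use of $\car k\neq 2,3$ completely explicit in the vanishing of the $x^3y$- and $x^2y^2$-coefficients; the paper's route is slightly more conceptual in that it does not need the orbit representatives as input and instead recovers the shape of the non-stable $f$'s from the structure of $\mathfrak{sl}_2$. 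Both arguments identify the same one-dimensional stabilizers for types $(2,2)$ and $(4)$ (the diagonal torus and the nilpotent line, respectively).
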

	\begin{proof}
The action of $\mathfrak g=\Lie(G)$ on $V$ can be identified with the representation $\Sym^4 \std$ of $\fsl_2$. Let us consider a pair $(X,f)\in \fsl_2\times V$ with $X\neq 0$, $f\neq 0$ but $Xf=0$. Since $X\neq 0$, it is either regular semi-simple or regular nilpotent. 

If $X$ is regular semi-simple, after conjugation by an element $h\in G$, it has the form  
        $$
		X=\mtrix{
				a & 0 \\ 
				0 & -a
			}.
		$$
In this case, $f$ has to be a multiple of $x^2y^2$. In other words, $f$ is of type  $(2,2)$. Conversely, if $f$ is of type $(2,2)$, it is conjugate to a quartic polynomial of the type $c x^2 y^2$ with $c\neq 0$ whose infinitesimal centralizer is the space of diagonal matrices in $\fsl_2$. 
		
If $X$ is regular nilpotent, after 
conjugation by an element $h\in G$, it has the form  
        \[ 
		      X=\mtrix{
				  0 & 1 \\ 
				  0 & 0}. \teq \label{nilpotent-matrix}
        \]
The space of $f$ annihilated by $X$ is generated by $y^4$. In other words, $f$ is of type $(4)$. Conversely, if $f$ is of type $(4)$, it is conjugate to $y^4$. Its infinitesimal centralizer is a one-dimensional space of matrices generated by a non-zero nilpotent matrix \eqref{nilpotent-matrix}.
	\end{proof}

We can compute explicitly the geometric stabilizers in stable orbits. Since there  is no infinitessimal stabilizer by proposition~\ref{prop:infinitesimal_stab}, it suffices to determine the $\lbar{k}$-points of $I_f$ for a given stable binary quartic form.
	
	\begin{prop} \label{prop:stabilizer}
	If $f \in V(\lbar{k})$ is of type $(1, 1, 1, 1)$, $(1, 1, 2)$ and $(1, 3)$, then $I_f$ is isomorphic to $\mathbb{Z}/2\mathbb{Z}\times\mathbb{Z}/2\mathbb{Z}$, $\mathbb{Z}/2\mathbb{Z}$ and $0$, respectively.
	\end{prop}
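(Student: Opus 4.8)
The plan is to compute the finite group $I_f(\lbar{k})$ directly for each of the three stable types. Since by proposition~\ref{prop:infinitesimal_stab} the infinitesimal stabilizer of a stable $f$ vanishes, $I_f$ is \'etale over $\lbar{k}$, hence a constant finite group scheme, and it is enough to identify the abstract group $I_f(\lbar{k})$. Two elementary facts will be used throughout: the action of $G=\PGL_2$ on $V\cong\Sym^4\std\otimes\det^{-2}$ is given by $g\cdot f=\det(g)^2\,(f\circ g^{-1})$ for $g\in\GL_2$, the $\det^2$-twist being exactly what makes the action factor through $\PGL_2$; and if $g\cdot f=f$ then $g$ preserves the zero divisor of $f$ on $\mathbb{P}^1$, while a nontrivial element of $G(\lbar{k})$ fixes at most two points of $\mathbb{P}^1$.

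For type $(1,3)$, conjugate so that $f=x^3y$; then any $g\in I_f(\lbar{k})$ fixes the triple root and the simple root, hence is represented by a diagonal matrix $\mtrix{t & 0 \\ 0 & 1}$, and the action formula gives $g\cdot(x^3y)=t^{-1}\,x^3y$, forcing $t=1$, so $I_f=0$. For type $(1,1,2)$, an element of $I_f(\lbar{k})$ fixes the double root and either fixes or interchanges the two simple roots, and is trivial if it fixes all three points; normalizing the double root to a point at infinity and the simple roots to a pair interchanged by $z\mapsto -z$, one gets $f=y^2(c_0x^2+c_2y^2)$ with $c_0,c_2\in\lbar{k}^\times$, and a direct check shows $z\mapsto -z$ fixes $f$, so $I_f(\lbar{k})\cong\mathbb{Z}/2\mathbb{Z}$.

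The interesting case is type $(1,1,1,1)$. Writing $P_1,\dots,P_4$ for the four distinct roots, the permutation action on $\{P_1,\dots,P_4\}$ gives a homomorphism $I_f(\lbar{k})\to S_4$, injective since an element fixing three of the $P_i$ fixes three points of $\mathbb{P}^1$. For the \emph{upper} bound I show the image contains no transposition, $3$-cycle or $4$-cycle: if $g\in I_f(\lbar{k})$ induces a $\nu$-cycle with $\nu\in\{2,3,4\}$, then $g$ has order $\nu$, so moving its two fixed points to $0$ and $\infty$ gives $g=\mtrix{\zeta & 0 \\ 0 & 1}$ with $\zeta$ a primitive $\nu$-th root of unity; the four roots then consist of a single $g$-orbit of size $\nu$ together with $4-\nu$ of them lying in $\{0,\infty\}$, which forces $f$ into the explicit shape $c(x^4-r^4y^4)$, $c\,y(x^3-r^3y^3)$, or $c\,xy(x^2-r^2y^2)$ according as $\nu=4,3,2$. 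In each case the action formula yields $g\cdot f=\zeta^{2-w}f$, where $w$ (equal to $0,0,1$ respectively) is the common residue modulo $\nu$ of the $x$-degrees occurring in $f$; since $\car k\neq 2,3$ we get $\zeta^{2-w}\neq 1$, a contradiction. As the only elements of $S_4$ that are neither transpositions, $3$-cycles nor $4$-cycles are the identity and the three double transpositions, the image lies in the Klein four-group. For the \emph{lower} bound, each of the three partitions of $\{P_1,\dots,P_4\}$ into two pairs is realized by a (unique) involution of $\mathbb{P}^1$---after moving the first pair to $\{0,\infty\}$ it is $z\mapsto(P_3P_4)/z$---and a direct computation shows this $g$ satisfies $g\cdot f=f$ on the nose, not merely up to scalar; the three involutions so obtained generate the Klein four-group inside $I_f(\lbar{k})$. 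Hence $I_f(\lbar{k})\cong\mathbb{Z}/2\mathbb{Z}\times\mathbb{Z}/2\mathbb{Z}$.

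The one genuine subtlety, and the step most deserving of care, is the scalar bookkeeping in the type $(1,1,1,1)$ case: it is precisely the $\det^2$-twist in the $G$-action that separates the condition that $g$ preserve the divisor of $f$ from the condition that $g$ fix $f$, and this is what cuts the naive symmetry group of a configuration of four points---which for harmonic or equianharmonic $f$ can be as large as $D_4$ or $A_4$---down to the Klein four-group asserted by the proposition. I note that the explicit normal forms and the computations above involve no genericity assumption, so the argument is uniform over all type $(1,1,1,1)$ forms. Alternatively, one can identify $I_f$ with the $2$-torsion of the Jacobian of the genus-one double cover $z^2=f(x,y)$, which gives $(\mathbb{Z}/2\mathbb{Z})^2$ at once; I prefer the self-contained computation above.
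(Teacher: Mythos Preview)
Your argument is correct, and for types $(1,3)$ and $(1,1,2)$ it is essentially the paper's proof with cosmetic differences in normalization. The genuine divergence is in type $(1,1,1,1)$: the paper does \emph{not} compute $I_f$ directly there but instead defers the case to proposition~\ref{prop:E[2]}, where $I|_{V^\reg}$ is identified with $E[2]|_{V^\reg}$ via the action on the genus-one double cover $D_f:z^2=f(x,y)$; since $E_f$ is an elliptic curve for $f$ of type $(1,1,1,1)$, one reads off $I_f\cong E_f[2]\cong(\mathbb{Z}/2\mathbb{Z})^2$. You mention this route in your last sentence but choose instead the self-contained $\PGL_2$ computation: embed $I_f$ in $S_4$, exclude transpositions, $3$-cycles and $4$-cycles by a scalar-factor calculation hinging on the $\det^2$-twist, and then exhibit the three double-transposition involutions. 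Your approach has the virtue of being elementary and of isolating exactly where the twist matters (your remark about harmonic/equianharmonic configurations is the right diagnosis); the paper's approach, on the other hand, is not merely an alternative but is structurally load-bearing, since the isomorphism $I\cong E[2]$ over $V^\reg$ is precisely what later links the invariant theory to $2$-Selmer groups. So your proof stands on its own, but within the paper's architecture the deferred argument does double duty.
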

	\begin{proof}
    The case where $f$ is of type $(1, 1, 1, 1)$ is postponed to proposition~\ref{prop:E[2]}. 
		
	  If $f$ is of type $(1, 1, 2)$, by the action of $G$, we can assume that $f = cxy(x-y)^2$. Thus, each element in the stabilizer of $f$ must stabilize the multiset $\{0, \infty, 1^{(2)}\}$. If $h\in I_f$ then either it stabilizes all three points $\{0,1,\infty\}$, or it exchanges $0, \infty$ and stabilizes $1$. Since an element of $G$ is completely determined by its action on three distinct points on $\mathbb{P}^1$, the stabilizer in this case is at most $\mathbb{Z}/2$. A direct calculation shows that it is equal to $\mathbb{Z}/2 \mathbb{Z}$.
		
		For type $(1, 3)$, as above, we can assume that $f = cx^3 y$. Each element in the stabilizer of $f$  must stabilize the multiset $\{0^{(3)}, \infty\}$, which means it stabilizes both $0$ and $\infty$. An element of $G$ fixing both points $0$ and $\infty$ has to lie in the diagonal torus. Now the diagonal torus acts on $x^3y$ by scalar multiplication, and only scalar matrices stabilizes $x^3 y$.
	\end{proof}	
	
	\begin{prop} \label{prop:Vreg_open_dense}
	The union of orbits of stable types $(1, 1, 1, 1), (1, 1, 2)$ and $(1, 3)$ is a dense open subset $V^\reg$ of $V$, which contains the image of the Weierstrass section $s: S\to V$. The restriction of $\pi:V\to S$ to $V^\reg$ is smooth. Moreover, the restriction of the stabilizer group scheme $I$ to $V^\reg$ is \etale.
	\end{prop}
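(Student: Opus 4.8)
The plan is to reduce all four assertions to the orbit classification of Proposition~\ref{prop:orbits_on_fibers}. For openness and density of $V^\reg$: the complement $V\setminus V^\reg$ is precisely the locus of binary quartics which are squares of a binary quadratic form; being the (affine cone over the) image of the morphism $\mathbb{P}^2\to\mathbb{P}^4$, $[q]\mapsto[q^2]$, it is Zariski closed, of dimension at most $3$, so $V^\reg$ is open and dense. (Density is also visible from $V^\reg\supseteq\pi^{-1}(\{\Delta\neq 0\})$, a nonempty open of the irreducible variety $V\cong\mathbb{A}^5$.) That the Weierstrass section lands in $V^\reg$ is immediate: $s(a,b)=y(x^3+axy^2+by^3)$ has a simple zero at $[1:0]$, since $x^3+axy^2+by^3$ does not vanish there, hence $s(a,b)$ is of stable type.

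The essential structural point is that \emph{every fibre of $\pi|_{V^\reg}$ is a single $G$-orbit}. Indeed, by Proposition~\ref{prop:orbits_on_fibers} the stable locus of the fibre of $\pi$ over a geometric point $(a,b)$ consists of exactly one orbit-type: type $(1,1,1,1)$ when $\Delta(a,b)\neq 0$, type $(1,1,2)$ when $\Delta(a,b)=0\neq(a,b)$, and type $(1,3)$ when $(a,b)=0$; and $G$ acts transitively on it. Since the stabilizers in the stable range are finite (Propositions~\ref{prop:infinitesimal_stab} and~\ref{prop:stabilizer}), each such orbit is smooth of dimension $\dim G=3$. Hence all fibres of $\pi|_{V^\reg}$ are geometrically smooth of dimension $3$, and $V^\reg=G\cdot s(S)$ as sets.

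To prove $\pi|_{V^\reg}$ smooth I would use the section: as $\pi\circ s=\mathrm{id}_S$, the differential $d\pi$ is surjective along $s(S)$, so — $V^\reg$ and $S$ being smooth over $k$ — $\pi|_{V^\reg}$ is smooth along $s(S)$; its smooth locus is open and stable under $G$ (which acts by automorphisms of $V^\reg$ over $S$), and containing $s(S)$ it contains $G\cdot s(S)=V^\reg$. (Alternatively: $V^\reg$ is Cohen--Macaulay, $S$ is regular, and the fibre dimension is constantly $3=5-2$, so ``miracle flatness'' gives flatness, and smoothness of the fibres finishes.) For étaleness of $I|_{V^\reg}$, consider the action morphism $\beta\colon G\times V^\reg\to V^\reg\times_S V^\reg$, $(g,v)\mapsto(v,gv)$: its fibre over $(v,v')$ is an $I_v$-torsor, hence finite, and $\beta$ is surjective because any two points in a fibre of $\pi|_{V^\reg}$ are $G$-conjugate. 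Source and target are smooth over $k$ of dimension $8$ (for the target one uses that $\pi|_{V^\reg}$ is now known to be smooth of relative dimension $3$), so ``miracle flatness'' again yields $\beta$ flat; its geometric fibres are torsors under the groups $\mathbb{Z}/2\mathbb{Z}\times\mathbb{Z}/2\mathbb{Z}$, $\mathbb{Z}/2\mathbb{Z}$ or $0$ of Proposition~\ref{prop:stabilizer}, in particular étale, so $\beta$ is unramified and hence étale. Since $I|_{V^\reg}$ is the base change of $\beta$ along the diagonal $V^\reg\hookrightarrow V^\reg\times_S V^\reg$, it is étale over $V^\reg$.

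The only step carrying genuine content is the constancy of the fibre dimension of $\pi|_{V^\reg}$ — equivalently, the complete orbit classification in the stable range — which is exactly Proposition~\ref{prop:orbits_on_fibers}; everything else is formal. The two hypotheses on the characteristic enter precisely here: $\car k\neq 3$ is needed to bring a stable quartic into Weierstrass form (already used in Proposition~\ref{prop:stable-orbit}), and $\car k\neq 2$ enters through Proposition~\ref{prop:stabilizer}, making the finite stabilizers étale rather than infinitesimal.
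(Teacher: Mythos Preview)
Your argument is correct and rests on the same inputs as the paper's (Propositions~\ref{prop:orbits_on_fibers}, \ref{prop:infinitesimal_stab}, \ref{prop:stabilizer}), but the packaging differs. The paper observes once and for all that the action map $m\colon G\times S\to V$, $(g,(a,b))\mapsto g\cdot s(a,b)$, is \'etale (its differential is injective because the infinitesimal stabilizers vanish along $s(S)$ by Proposition~\ref{prop:infinitesimal_stab}, and source and target both have dimension $5$) with image exactly $V^\reg$ (by Proposition~\ref{prop:orbits_on_fibers}); smoothness of $\pi|_{V^\reg}$ is then immediate since $\pi\circ m$ is the smooth projection $G\times S\to S$, and \'etaleness of $\beta\colon G_S\times_S V^\reg\to V^\reg\times_S V^\reg$ follows by \'etale descent along $m$. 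You instead handle the two conclusions separately: smoothness via surjectivity of $d\pi$ along $s(S)$ plus $G$-invariance of the smooth locus, and \'etaleness of $\beta$ via miracle flatness (equidimensional source and target, finite fibres). Both routes are short; the paper's has the virtue of isolating a single \'etale chart $G\times S\to V^\reg$ from which everything flows, while yours makes the role of the section and of the stabilizer computation more explicit and avoids checking injectivity of $dm$.
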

	
	\begin{proof}
	The first two assertions follow directly from 
proposition~\ref{prop:infinitesimal_stab} and~\ref{prop:stable-orbit} above. We derive from~\ref{prop:infinitesimal_stab}, that the morphism $m: G\times S \to V$, defined by restricting the action morphism to the Weierstrass section, is \'etale. By proposition \ref{prop:orbits_on_fibers}, the image of this map is $V^\reg$. We infer that the restriction of $\pi$ to $V^\reg$ is smooth. Moreover, the morphism $G_S \times_S V^\reg \to V^\reg\times_S V^\reg$ defined by $(g, v) \mapsto (v, gv)$ is \'{e}tale, and in particular, the restriction of $I$ to $V^\reg$ is an \etale{} group scheme.	
\end{proof}

\begin{cor} \label{prop:I_S}
There exists a unique group scheme $I_S$ over $S$ equipped with a $G$-equivariant isomorphism $\pi^* I_S \to I$ over $V^\reg$. There is a $\Gm$-equivariant  isomorphism $[BI_S]=[V^\reg/G]$ where $BI_S$ is the relative classifying stack of $I_S$ over $S$. 
\end{cor}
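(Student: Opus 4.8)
The plan is to take $I_S := s^*I$, the pullback of the universal stabilizer along the Weierstrass section $s\colon S\to V$, which lands in $V^\reg$ by proposition~\ref{prop:Vreg_open_dense}. Since the restriction of $I$ to $V^\reg$ is an \'etale group scheme (proposition~\ref{prop:Vreg_open_dense}) and $S=\mathbb A^2$ is reduced, $I_S$ is an \'etale, and in particular reduced, group scheme over $S$. The one point requiring real care is that \emph{$I_S$ is commutative}: its geometric fibre over $x\in S$ is the stabilizer $I_{s(x)}$, which by proposition~\ref{prop:stabilizer} is one of $\mathbb Z/2\mathbb Z\times\mathbb Z/2\mathbb Z$, $\mathbb Z/2\mathbb Z$ or the trivial group according to the type of $s(x)$, hence abelian. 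As $I_S\times_S I_S$ is \'etale over the reduced scheme $S$, the two morphisms $I_S\times_S I_S\to G_S$ sending $(\iota,j)$ to $\iota j\iota^{-1}$ and to $j$ agree on all geometric points and therefore coincide.

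Next I would analyze the morphism $m\colon G\times S\to V^\reg$, $(g,x)\mapsto g\cdot s(x)$, obtained by restricting the action map to the Weierstrass section. By the proof of proposition~\ref{prop:Vreg_open_dense} this morphism is \'etale, and it is surjective because, by proposition~\ref{prop:orbits_on_fibers}, each geometric fibre of $V^\reg\to S$ is a single $G$-orbit (the unique stable orbit, of type $(1,1,1,1)$, $(1,1,2)$ or $(1,3)$). Hence $m$ is faithfully flat. Viewing $I_S$ as a closed subgroup scheme of $G_S=G\times S$ and letting it act on $G\times S$ by right translation in the $G$-factor, one checks that the shear map $(G\times S)\times_S I_S\to (G\times S)\times_{V^\reg}(G\times S)$, $((g,x),\iota)\mapsto((g,x),(g\iota,x))$, is an isomorphism: it is a monomorphism since $I_S\hookrightarrow G_S$ is, and it is surjective since $g_1 s(x_1)=g_2 s(x_2)$ forces $x_1=x_2$ (apply $\pi$) and then $g_1^{-1}g_2\in I_{s(x)}=(I_S)_x$. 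Thus $m$ is an $I_S$-torsor and exhibits $V^\reg$ as the fppf quotient $(G\times S)/I_S$, equivariantly for the left $G$-action.

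The $G$-equivariant isomorphism $\pi^*I_S\xrightarrow{\sim}I$ over $V^\reg$ is then obtained by descent along $m$. Over $v=[g,x]$ the $G$-stabilizer is $I_v=g\,(I_S)_x\,g^{-1}$, and conjugation by $g$ gives an isomorphism $(\pi^*I_S)_v=(I_S)_x\to I_v$; replacing $g$ by $g\iota$ with $\iota\in(I_S)_x$ changes it by conjugation by $\iota$ on $(I_S)_x$, which is trivial by commutativity of $I_S$. So these isomorphisms are independent of the local choice of $g$ and descend along the faithfully flat $m$ to an isomorphism $\phi\colon\pi^*I_S\to I$ with $s^*\phi=\operatorname{id}_{I_S}$. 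For uniqueness, given another pair $(J,\psi\colon\pi^*J\xrightarrow{\sim}I)$, applying $s^*$ yields $J=s^*\pi^*J\xrightarrow{s^*\psi}s^*I=I_S$; this identification is compatible with $\phi$ and $\psi$ because $\pi$ is faithfully flat with geometrically connected fibres (each fibre of $V^\reg\to S$ being a $\PGL_2$-orbit), so isomorphisms between pullbacks from $S$ descend to $S$.

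Finally, for the stack statement I would use the two commuting free actions on $G\times S$: the left $G$-action by translation, with quotient $(G\times S)/G=S$, and the right $I_S$-action above, with quotient $V^\reg$. Forming the double quotient stack in either order gives
\[
  [V^\reg/G]\;=\;\bigl[\,((G\times S)/I_S)/G\,\bigr]\;=\;\bigl[\,((G\times S)/G)/I_S\,\bigr]\;=\;[S/I_S]\;=\;BI_S,
\]
where $I_S$ acts trivially on $S$ on the right. All of $s$, $m$, $I_S$ and these identifications are compatible with the $\Gm$-actions, this being built into the extended Weierstrass section $S\times\Gm\to V\times G\times\Gm$ recorded earlier (which is equivariant for all actions in sight), so the resulting isomorphism $BI_S\cong[V^\reg/G]$ is $\Gm$-equivariant. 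The main obstacle is exactly the commutativity of $I_S$: without it one only obtains that $I$ is an inner form of $\pi^*I_S$ along the torsor $m$, and neither the descent of $\phi$ nor the double-quotient identification goes through; everything else is formal manipulation with faithfully flat descent and quotient stacks.
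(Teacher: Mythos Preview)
Your proof is correct and follows essentially the same line as the paper's. The paper's argument is terse: it obtains $I_S$ by descending $I$ along the faithfully flat map $\pi|_{V^{\reg}}$, with descent datum coming from the conjugation action of $G$ on $I$ and the fact that $I$ is abelian, and declares the rest formal. Your construction $I_S:=s^*I$ is exactly the concrete incarnation of that descent (since $s$ is a section of $\pi$, the descended object must be $s^*I$), and your explicit verification that $m$ is an $I_S$-torsor together with the double-quotient computation spells out what the paper calls ``a formal consequence.'' Both arguments hinge on the same key point, which you correctly identify and justify: the commutativity of the stabilizers.
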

\begin{proof}
  The group scheme $I_S$ is obtained by descending $I$ along $\pi|_{V^\reg}$. The descent datum is obtained using the conjugating action of $G$ on $I$ and the fact that $I$ is abelian. The rest of the corollary is a formal consequence of what we have established so far.
\end{proof}

\section{Elliptic curves}
	
	The relation between elliptic curves and invariant geometry of binary quartic forms has been discovered since 19th century by Cayley and Hermite, and later stated with precision by Weil \cite{Weil:1954tv}.  
		
	\subsection{Jacobian of genus one curves}

Let $D_V$ be the family of arithmetic genus one curves defined over $V$ by the equation $z^2 = f(x, y)$ where $f$ varies over all binary quartic forms. It is constructed by the following cartesian diagram: 
\[
	\xymatrix{
		D_V \ar[d] \ar[r] & \mathcal{O}_{\mathbb{P}^1_V}(2) \ar[d]^{(-)^2} \\
		\mathbb{P}^1_V \ar@/^1pc/[r]^f \ar[d] & \mathcal{O}_{\mathbb{P}^1_V}(4) \ar[l] \\
		V
	}
	\teq \label{diag:universal_quartic}
\]
where $f$ is the universal binary quartic form, and $(-)^2: \mathcal{O}_{\mathbb{P}^1_V}(2) \to \mathcal{O}_{\mathbb{P}^1_V}(4)$ is the squaring map.

\begin{lem}
  If $f\in V(\bar k)-\{0\}$, $D_f$ is reduced. If $f\in V^\reg(\bar k)$, $D_f$ is integral. 
\end{lem}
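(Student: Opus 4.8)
The plan is to realize $D_f$, via its defining Cartesian diagram above, as an honest double cover of $\mathbb{P}^1$, and then to deduce both assertions from elementary facts about the quadratic polynomial $w^2-g$ over the polynomial ring $\bar k[t]$. Cover $\mathbb{P}^1$ by the two standard affine charts $U_\infty=\{y\neq 0\}$ and $U_0=\{x\neq 0\}$, with coordinates $t=x/y$ and $s=y/x$. Trivializing $\mathcal{O}_{\mathbb{P}^1}(2)$, hence also $\mathcal{O}_{\mathbb{P}^1}(4)$, over $U_\infty$ by the appropriate power of $y$, the quartic $f$ becomes the polynomial $g(t)=f(t,1)\in R:=\bar k[t]$, and the diagram identifies the preimage of $U_\infty$ in $D_f$ with $\Spec(R[w]/(w^2-g))$, where $w=z/y^2$; symmetrically, the preimage of $U_0$ is $\Spec(\bar k[s][u]/(u^2-\tilde g))$ with $\tilde g(s)=f(1,s)$. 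Since $D_f\to\mathbb{P}^1$ is affine, these two opens cover $D_f$, so it suffices to analyse the two rings; and since $f\neq 0$ as a section of $\mathcal{O}_{\mathbb{P}^1}(4)$ while $\mathbb{P}^1$ is irreducible, neither $g$ nor $\tilde g$ vanishes identically.

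For reducedness, observe that $A:=R[w]/(w^2-g)$ is free of rank $2$ over the domain $R$, hence $R$-torsion-free, hence embeds into $A\otimes_R F=F[w]/(w^2-g)$, where $F:=\mathrm{Frac}(R)$. Since $g\neq 0$ and $\car\bar k\neq 2$, the polynomial $w^2-g$ is separable over $F$ -- its discriminant $4g$ is nonzero -- so $F[w]/(w^2-g)$ is reduced, indeed finite \'etale over $F$. Therefore $A$ is reduced; the same argument over $U_0$ shows that $D_f$ is reduced whenever $f\neq 0$.

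For integrality when $f\in V^{\reg}$, it remains to show that $D_f$ is irreducible, and for this it is enough that each chart be a domain, i.e. that $g$ -- and likewise $\tilde g$ -- not be a square in $F$; for then $w^2-g$ is irreducible over $F$, so (arguing as above) $A$ embeds into the field $F[w]/(w^2-g)$ and is a domain. Suppose then $g=h^2$ with $h\in R$. Every zero of $g$ (a root in $\mathbb{A}^1\subset\mathbb{P}^1$) has even multiplicity, and the multiplicity of $f$ at $\infty=[1:0]$ equals $4-\deg g=4-2\deg h$, again even; so every zero of $f$ on $\mathbb{P}^1$ would have even multiplicity. But by Proposition~\ref{prop:Vreg_open_dense} a form in $V^{\reg}$ is of type $(1,1,1,1)$, $(1,1,2)$ or $(1,3)$, hence has a simple zero -- a contradiction. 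Thus both affine charts of $D_f$ are integral; they cover $D_f$, and their intersection (the preimage of $\Gm\subset\mathbb{P}^1$) is nonempty since $D_f\to\mathbb{P}^1$ is finite flat of degree $2$, hence surjective. As a scheme covered by two irreducible opens with nonempty intersection is irreducible, $D_f$ is irreducible; being also reduced, it is integral.

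All of this is routine; the two points deserving some care are the bookkeeping of the multiplicity of $f$ at the point at infinity of each chart -- which is exactly where the hypothesis $f\in V^{\reg}$ enters -- and the elementary passage from the two affine charts back to $D_f$ itself. One may also phrase the irreducibility step more conceptually: $D_f$ is the double cover $z^2=f$ of $\mathbb{P}^1$, and such a cover is reducible -- equivalently, since $D_f$ is reduced, disconnected -- exactly when $f$ is the square of a global section of $\mathcal{O}_{\mathbb{P}^1}(2)$, i.e. when the zero divisor of $f$ equals $2E$ for an effective divisor $E$, which cannot happen once $f$ has a zero of odd multiplicity.
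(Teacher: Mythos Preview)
Your proof is correct, but it takes a different route from the paper's. For reducedness, the paper argues that $D_f$ is cut out by a single equation on the smooth surface $\mathcal{O}_{\mathbb{P}^1}(2)$, hence has no embedded components; since it is generically reduced (at any point where $f$ does not vanish the equation $z^2=f$ is smooth), it is reduced everywhere. You instead compute on affine charts, embedding $R[w]/(w^2-g)$ into the \'etale $F$-algebra $F[w]/(w^2-g)$. For irreducibility, the paper localizes at a simple zero of $f$: over the formal disc there, $z^2=f$ is a ramified double cover of a disc, hence connected, so the global degree-$2$ cover cannot split into two sheets. You instead argue globally that $g$ cannot be a square in $R$ (equivalently in $F$, since $R$ is a UFD --- a small step you pass over when writing ``Suppose then $g=h^2$ with $h\in R$''), whence each chart is already a domain, and then patch. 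Your final paragraph, recasting the obstruction as ``$f$ is not twice an effective divisor,'' is essentially the paper's argument rephrased.

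Both approaches are short; yours is more hands-on and entirely self-contained, while the paper's invokes standard structural facts (Cohen--Macaulayness of hypersurfaces, local structure of ramified covers) and is perhaps easier to transport to higher-degree analogues where explicit charts become unwieldy.
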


\begin{proof}
For every $f\in V$, the curve $D_f$ is defined on the ruled surface $\mathcal{O}_{\mathbb{P}^1_V}(2)$ by one single equation. For $f\neq 0$, it is generically reduced and thus reduced. If moreover $f\in V^\reg(\bar k)$, the restriction of $D_f$ over the formal completion of $\mathbb P^1$ at a simple zero of $f$ is an irreducible covering of this formal disc. We deduce that $D_f$ is irreducible for every $f\in V^\reg(\bar k)$. Since $D_f$ is reduced and irreducible, it is integral.
\end{proof}

Let $D^\reg$ be the restriction of $D$ to $V^\reg$. We can now apply the representability of the relative Picard functor and obtain 
the scheme $\Pic_{D^\reg/V^\reg}$ locally of finite type over $V^\reg$. The Jacobian $E_{V^\reg} = \Pic^0_{D^\reg/V^\reg}$ over $V^\reg$ is defined to be the component classifying line bundles of degree $0$. The smooth locus $D^\sm$ of $D^\reg\to V^\reg$ can be identified with $\Pic^1_{D^\reg/V^\reg}$, which is the component classifying line bundles of degree $1$. In particular, $D^\sm$ is an $E$-torsor over $V^\reg$.

One can easily check that if $f\in V^\reg(\bar k)$ is a binary quartic form of one of the types $(1, 1, 1, 1)$, $(1, 1, 2)$ and $(1, 3)$, then $E_f$ is an elliptic curve, $\Gm$ and $\Ga$ respectively. In the first case, $D_f$ is a smooth genus one curve acted on simply transitively by the elliptic curve $E_f$. In the two latter cases, $D_f$ is a rational curve, with nodal or cuspidal singularity respectively, acted on by $E_f$.

Over $S$, the universal Weierstrass curve $E_S$ is defined to be the closed subscheme of $\mathbb{P}^2_S$ given by the equation:
$$
	z^2y = x^3 + axy^2 + by^3.
$$
Following Cayley and  Hermite, Weil proved in~\cite{Weil:1954tv} that for every binary quartic form $f\in V^\reg(\bar k)$ of type $(1,1,1,1)$ of invariant $(a,b)\in S(\bar k)$,  there is a canonical isomorphism $E_f=E_{a,b}$. His proof can be extended to the regular locus so that we have a canonical isomorphism
\[
  E_{V^\reg} \to E_S \times_S {V^\reg}. \teq \label{Weil-Cayley-Hermite}
\]
We remark that this isomorphism can be made naturally $G\times \Gm$-equivariant compatible  with the action of $G\times \Gm$ on $V$ given by the formula \eqref{Gm-action}. 
	
	\subsection{Centralizer and $2$-torsion of elliptic curves}
	In this subsection, we will present what we see as an important link between the arithmetic of elliptic curves and invariant geometry of binary quartic forms. Recall that over $S$, formula \eqref{stabilizer} defines the stabilizer group scheme $I$, which is quasi-finite and \'etale over $V^\reg$.  
	
\begin{prop} \label{prop:E[2]}
Over $V^\reg$, there is a canonical isomorphism
		$$
			I|_{V^\reg} \cong E[2] |_{V^\reg}.
		$$
\end{prop}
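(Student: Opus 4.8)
The plan is to identify the stabilizer group scheme $I|_{V^\reg}$ with $E[2]|_{V^\reg}$ by exhibiting a natural action of the stabilizer on the genus one curve $D_f$ which, upon passing to the Jacobian, becomes translation by a $2$-torsion point; conversely every $2$-torsion point should arise this way. Concretely, for $f\in V^\reg(\bar k)$ of type $(1,1,1,1)$, I would first observe that $h\in I_f \subseteq G(\bar k)$ acts on $\mathbb{P}^1$ preserving the quartic $f$ up to the natural character, hence lifts to an automorphism of the double cover $D_f\colon z^2 = f(x,y)$ commuting with the hyperelliptic involution $z\mapsto -z$. This gives a homomorphism $I_f \to \Aut(D_f)$, and composing with the action of $\Aut(D_f)$ on $\Pic^0(D_f) = E_f$ one lands in $\Aut(E_f)$ as an abelian group; the image actually lands in translations because $h$ acts trivially on the invariant differential (it comes from a linear action preserving the relevant line bundles), so one gets a map $I_f \to E_f(\bar k)$. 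The key point is that this map has image in $E_f[2]$: since $h^2$ already fixes three of the four branch points and an element of $G$ is determined by its action on three points, combined with proposition~\ref{prop:stabilizer} telling us $I_f \cong \mathbb{Z}/2 \times \mathbb{Z}/2$, one checks the composite $I_f \to E_f$ is injective with image of exponent dividing $2$.

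Having the injection $I_f \hookrightarrow E_f[2]$ of group schemes over $V^\reg$ (which can be done in families, not just on geometric points, since the whole construction diagram~\eqref{diag:universal_quartic} and the Picard scheme are defined relatively), it remains to check it is an isomorphism. For this I would either count: by proposition~\ref{prop:stabilizer} the geometric fiber of $I$ over a type-$(1,1,1,1)$ point has order $4$, and $E_f[2]$ has order $4$ over the elliptic locus, so an injective map of finite flat group schemes of the same rank over the (reduced, even smooth by proposition~\ref{prop:Vreg_open_dense}) base $V^\reg$ restricted to the type-$(1,1,1,1)$ locus is an isomorphism there; then one extends over the degenerate types $(1,1,2)$ and $(1,3)$ by noting both sides are quasi-finite étale of the expected rank ($\mathbb{Z}/2$ over the $(1,1,2)$-locus matching the $2$-torsion of $\Gm$, and trivial over the $(1,3)$-locus matching the $2$-torsion of $\Ga$ in characteristic $\neq 2$) and that an isomorphism on a dense open set between étale separated group schemes extends — or more cleanly, build the map directly over all of $V^\reg$ and check it is an isomorphism fiberwise using propositions~\ref{prop:stabilizer} and the identification of $E_f$ with $E_f,\Gm,\Ga$ in the three cases. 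Alternatively, and perhaps more in the spirit of the paper, one can produce the inverse map geometrically: a $2$-torsion line bundle on $D_f$ (equivalently a degree-$0$ divisor class $P$ with $2P\sim 0$) determines, via the linear system $|\mathcal{O}_{D_f}(2)|$ embedding $D_f$ in $\mathbb{P}^1\times\mathbb{P}^1$ or the original $g^1_2$, an automorphism of $D_f$ over $\mathbb{P}^1$ hence an element of $G$ stabilizing $f$.

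The main obstacle I anticipate is the type-$(1,1,1,1)$ case itself — precisely the case proposition~\ref{prop:stabilizer} deferred here — namely producing the explicit identification $I_f \cong E_f[2]$ compatibly in families and matching it with the Weil/Cayley--Hermite isomorphism~\eqref{Weil-Cayley-Hermite} rather than just an abstract isomorphism of group schemes. The classical statement is that the four elements of $I_f$ correspond to the identity and the three "pairings" of the four roots of $f$ into two pairs, and these three involutions of $\mathbb{P}^1$ (each swapping two pairs of branch points) correspond under the Jacobian map to the three nontrivial $2$-torsion points of $E_f$; writing this down carefully, and checking the group law matches (composition of involutions of $\mathbb{P}^1$ versus addition in $E_f[2]$, both being $(\mathbb{Z}/2)^2$), is the computational heart. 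Making it work in a family over $V^\reg$ — where the four roots are not globally defined — requires phrasing everything in terms of the relative Picard functor and the action of the relative automorphism group scheme of $D^\reg/V^\reg$ on $E_{V^\reg} = \Pic^0_{D^\reg/V^\reg}$, which is exactly where the relative representability set up before proposition~\ref{prop:E[2]} gets used. A final routine check is that the degenerate cases $(1,1,2)$ and $(1,3)$, where $E_f$ is $\Gm$ or $\Ga$, are consistent with the stabilizer orders $\mathbb{Z}/2$ and $0$ from proposition~\ref{prop:stabilizer}, which uses $\car k \neq 2$ for the $\Ga$ case.
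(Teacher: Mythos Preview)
Your broad architecture matches the paper's: build a homomorphism $I_f \to E_f$ from the action of $I_f$ on $D_f$, show it factors through $E_f[2]$, then verify bijectivity on geometric fibers over each orbit type. But two key steps in your execution are shakier than the paper's, and the paper's fixes are worth knowing.

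First, your construction of the map $I_f \to E_f$. You pass through $\Aut(D_f)$ and argue the image ``lands in translations because $h$ acts trivially on the invariant differential.'' This needs justification you have not supplied: one must actually compute the character by which $h\in I_f$ acts on $H^0(D_f,\Omega^1)$ and see it is trivial, and the twist by $\det^{-2}$ in $V$ makes this a genuine (if small) computation. The paper bypasses this entirely. Since the Cayley--Hermite--Weil isomorphism \eqref{Weil-Cayley-Hermite} is $G$-equivariant with $G$ acting trivially on the base $E_S$, it is immediate that $I_f$ acts trivially on $E_f$. Then, because $D_f^{\rm sm}$ is an $E_f$-torsor and the $I_f$-action commutes with the $E_f$-action (precisely because $I_f$ acts trivially on $E_f$), the torsor structure alone produces the homomorphism $I_f\to E_f$.

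Second, and more seriously, your argument that the image lies in $E_f[2]$ is circular. You invoke ``proposition~\ref{prop:stabilizer} telling us $I_f\cong\mathbb{Z}/2\times\mathbb{Z}/2$,'' but that proposition explicitly defers the type $(1,1,1,1)$ case to the present one. Your alternative claim that ``$h^2$ already fixes three of the four branch points'' is not true for an arbitrary permutation of four points (a $4$-cycle squared fixes none), so it cannot be used before you know the cycle type. The paper avoids this neatly by bringing in the ramification locus $R_f\subset D_f$: the subgroup $E_f[2]\subset E_f$ acts simply transitively on $R_f$, and $I_f$ visibly preserves $R_f$, so the homomorphism $I_f\to E_f$ automatically factors through $E_f[2]$. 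Injectivity then falls out cleanly: if $h\in I_f$ maps to $0$ it fixes every point of $R_f$, hence fixes all four zeros of $f$ in $\mathbb{P}^1$, hence $h=1$ by sharp $3$-transitivity. Surjectivity is then a count. Your proposed explicit description via ``pairings of the four roots'' would eventually recover all of this, but the ramification-locus argument is both shorter and manifestly functorial over $V^\reg$, which is what you need for the family statement.
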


\begin{proof} By construction \eqref{diag:universal_quartic}, $G$ acts on the family of arithmetic genus one curve $D$ over $V^\reg$. This induces an action of $G$ on the Jacobian $E$ of $D$. For every $f\in V^\reg$, the stabilizer $I_f$ acts on the genus one curve $D_f$ and its Jacobian $E_v$. It follows from the Cayley-Hermite-Weil theorem \eqref{Weil-Cayley-Hermite} that $I_f$ acts trivially on $E_f$. 

As our construction is functorial, if $h\in I_f$, $d\in D_f^{\rm sm}$ and $e\in E_f$, we have 
$$h(ed)= h(e) h(d)$$
where $ed$ denotes the action of $E_f$ on $D_f^{\rm sm}$. Since $I_f$ acts trivially on $E_f$, the above equality implies that the action of $I_f$ and $E_f$ on $D_f^{\rm sm}$ commute. As $D_f^{\rm sm}$ is a torsor under the action of $E_f$, this gives rise to a homomorphism 
\[
  I_f\to E_f \teq \label{I-to-E}
\]
through which the action of $I_f$ on $D_f^{\rm sm}$ factors.

We will first prove that  the homomorphism \eqref{I-to-E} factors through the subgroup $E_f[2]$ of $2$-torsions of $E_f$. It suffices to prove this for $f$ of type $(1,1,1,1)$, since the general case follows by flatness. Let $R_f$ denote the ramification locus of $D_f$ over ${\mathbb P}^1$. One can check that $E_f[2]$ acts simply transitively on $R_f$ and this action commutes with the action of $I_f$. This gives rise to a homomorphism $I_f\to E_f[2]$ through which~\eqref{I-to-E} factors. 

For both $I$ and $E[2]$ are \'etale group schemes over $V^\reg$, in order to prove that $I\to E[2]$ is an isomorphism, it is enough to check that it induces a bijection on geometric points over each $f\in V^\reg(\bar k)$. 

Let $f\in V^\reg(\bar k)$ be of type $(1,1,1,1)$. Let $h\in I_f$ be an element with trivial image in $E_f[2]$. In this case, $f$ fixes all the four ramifications points of $D_f$. In other words, it fixes the four zeros of $f$, which implies that $h=1$ since ${\rm PGL}_2$ acts sharply $3$-transitive on the projective line. It follows that the homomorphism $I_f\to E_f[2]$ is injective. It must also be  surjective, for both groups $I_f$ and $E_f[2]$ have $4$ elements.

For type $(1,1,2)$, this is an explicit calculation for nodal rational curve as in proposition \ref{prop:stabilizer}. Finally, for type $(1,3)$, there is nothing to be proved, since both groups $I_f$ and $E_f[2]$ are trivial. \end{proof}

The isomorphism $I\to E[2]$ over $V^\reg$ is by construction $G$-equivariant. It descends to an isomorphism of group schemes $I_S \to E_S[2]$ over $S$, where $I_S$ is defined in proposition  \ref{prop:I_S} and $E_S$ in \eqref{Weil-Cayley-Hermite}. It follows from proposition  \eqref{prop:I_S} that there exists a $\Gm$-equivariant isomorphism
\[
  BE_S[2]=[V^\reg/G]. \teq \label{BE2}
\]

\subsection{Link to 2-Selmer groups}	

Recall that $C$ is a smooth, projective and geometrically connected curve over $k$. We will denote $K=k(C)$ the field of rational functions of $C$ and $K_v$ 
its completion at a closed point $v\in |C|$.

For each morphism $\alpha:C\to [S/\Gm]$ we have a family of Weierstrass curve $E_\alpha=\alpha^*E_S$. The groupoid of maps $\beta:C\to [BI_S/\Gm]$ over $\alpha$ is by definition the groupoid of $I_\alpha$-torsors over $E$ where $I_\alpha=\alpha^* I_S$. We will show in this section that there is a closed connection between this groupoid and the 2-Selmer group of the generic fiber $E_{\alpha,K}$ of $E_{\alpha}$. We recall that for each  elliptic curve $E$ defined over $K$, the 2-Selmer group of $E$ is defined as the kernel of the homomorphism:
\[
  {\rm Sel_2}(E)={\rm ker}(H^1(K, E[2]) \to \prod_{v\in |C|} H^1(K_v, E)).
\]
We will write $\Sel_2(E_\alpha)$ instead of $\Sel_2(E_{\alpha,K})$ as this shorthand doesn't cause any confusion.

The \etale{} cohomology group $H^1(C,I_\alpha)$ is naturally identified with the group of isomorphism classes of $I_\alpha$-torsors over $E$. By restriction to the generic fiber of $C$, we obtain a homomorphism
\[
  H^1(C,I_\alpha) \to H^1(K, I_{\alpha})=H^1(K, E_{\alpha}[2]).\teq \label{restriction-to-generic}
\]
	
\begin{prop} \label{prop:Lang_theorem}
The homomorphism \eqref{restriction-to-generic} factors through the 2-Selmer group ${\rm Sel}_2(E_{\alpha})$.
\end{prop}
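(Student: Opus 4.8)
The plan is to verify the defining local conditions for the 2-Selmer group at every closed point $v \in |C|$. Recall that an element of $H^1(K, E_\alpha[2])$ lies in $\Sel_2(E_\alpha)$ precisely when its image in $H^1(K_v, E_\alpha)$ is trivial for all $v$; equivalently, when for each $v$ the restricted class lies in the image of the local Kummer map $E_\alpha(K_v)/2E_\alpha(K_v) \hookrightarrow H^1(K_v, E_\alpha[2])$. So, given a class $\xi \in H^1(C, I_\alpha)$, I would fix $v$, let $\mathcal{O}_v$ be the completed local ring at $v$ with fraction field $K_v$, and analyze the image of $\xi$ in $H^1(\Spec \mathcal{O}_v, I_\alpha)$, which then restricts to the class we care about in $H^1(K_v, E_\alpha[2])$.

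The key point is that $\xi$ extends over all of $C$, so in particular it comes from a class in $H^1(\Spec \mathcal{O}_v, I_\alpha)$ — an $I_\alpha$-torsor over the whole local curve, not just its generic point. First I would use that $I_\alpha = \alpha^* I_S$ and that $I_S \cong E_S[2]$ over $S$ (the isomorphism established just before this statement), so the torsor lives over the Weierstrass model $E_\alpha$ pulled back to $\Spec \mathcal{O}_v$, which is a family of Weierstrass curves over a henselian (indeed complete) DVR. Then I would invoke the standard fact that for such an integral model the local Kummer image is exactly $H^1_{\mathrm{ur}}$-type, more precisely that the image of $H^1(\mathcal{O}_v, E_\alpha[2]) \to H^1(K_v, E_\alpha[2])$ lands inside the image of $E_\alpha(K_v)/2$. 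This is where one uses properness of the Néron model / the fact that $E_\alpha(\mathcal{O}_v) \to H^1(\mathcal{O}_v, E_\alpha[2])$ surjects onto the relevant part after applying Lang's theorem to the (connected components of the) special fiber, together with Hensel lifting. Chasing the commutative square
\[
  \xymatrix{
    H^1(\mathcal{O}_v, I_\alpha) \ar[r] \ar[d] & H^1(K_v, E_\alpha[2]) \ar[d] \\
    H^1(\mathcal{O}_v, E_\alpha) \ar[r] & H^1(K_v, E_\alpha)
  }
\]
then shows the image in $H^1(K_v, E_\alpha)$ dies, because the left vertical target $H^1(\mathcal{O}_v, E_\alpha)$ vanishes (again by Lang's theorem: $E_\alpha$ has connected special fiber over the henselian local ring only when the reduction is good, but in general one argues with the smooth locus / Néron model and the fact that $H^1$ of a finite field with coefficients in a smooth connected group scheme is trivial).

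I expect the main obstacle to be the bad-reduction places: where $E_{\alpha,v}$ does not have good reduction, one cannot simply say $H^1(\mathcal{O}_v, E_\alpha) = 0$ on the nose, and the model $E_\alpha$ coming from $\alpha$ need not be the Néron model. The honest argument has to pass to the Néron model $\mathcal{N}_v$ of $E_{\alpha,K_v}$, use that $\mathcal{N}_v(\mathcal{O}_v) = E_{\alpha}(K_v)$, and that the class $\xi$, being unramified (it extends over $\Spec \mathcal{O}_v$), maps into the "unramified" subgroup $H^1_{\mathrm{ur}}(K_v, E_\alpha[2]) = \mathrm{image}\big(H^1(\mathcal{O}_v^{\mathrm{sh}} / \mathcal{O}_v) \big)$ — and the content is the well-known identification (Lang) of this unramified subgroup with the image of the Kummer map $E_\alpha(K_v)/2E_\alpha(K_v)$, valid because $H^1$ of $\widehat{\mathbb{Z}}$ acting on $\mathcal{N}_v(\mathcal{O}_v^{\mathrm{sh}})$ surjects appropriately and $H^1$ of the residue field with coefficients in the component group's complement vanishes. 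Once the local picture is pinned down at every $v$ — unramified classes restrict into the Kummer image, hence have trivial image in $H^1(K_v, E_\alpha)$ — the proposition follows immediately from the definition of $\Sel_2$. I would present the argument in the form: (1) $\xi$ is everywhere locally unramified since it extends over $C$; (2) unramified $\Rightarrow$ in the Kummer image locally, via Lang's theorem applied to special fibers of Néron models; (3) conclude the image in $\prod_v H^1(K_v, E_\alpha)$ is trivial, hence the class is in $\Sel_2(E_\alpha)$.
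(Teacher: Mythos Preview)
Your core approach matches the paper's: restrict the global class to $H^1(\Spec \mathcal{O}_v, I_\alpha)$, chase through the commutative square to $H^1(K_v, E_\alpha)$, and kill it there via Lang's theorem. The paper's proof is exactly this three-line diagram chase.

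Where you diverge is in anticipating an obstacle that is not there. You write that at bad places ``one cannot simply say $H^1(\mathcal{O}_v, E_\alpha) = 0$ on the nose'' and propose a detour through the N\'eron model, component groups, and the identification of unramified cohomology with the Kummer image. The paper bypasses all of this with a single observation: the Weierstrass group scheme $E_\alpha$ (equivalently, the smooth locus of the Weierstrass family, which is what carries the group structure and contains $I_\alpha$) has \emph{geometrically connected} fibers at every point of $C$ --- an elliptic curve, $\Gm$, or $\Ga$ according to the type of reduction. Lang's theorem (plus Hensel) then gives $H^1(\Spec \mathcal{O}_v, E_\alpha) = 0$ directly, with no need to compare $E_\alpha$ to the N\'eron model. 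The whole point is that one works with the given model $E_\alpha$, not the N\'eron model; connectedness of fibers is built in, and that is all Lang requires.

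Your N\'eron-model route could be made to work, but it is more delicate than you indicate (for instance, ``unramified'' for the quasi-finite group scheme $I_\alpha$ over $\mathcal{O}_v$ is not literally Galois-unramified when the special fiber of $I_\alpha$ is smaller than the generic one), and it is simply unnecessary here.
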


\begin{proof}
		We have the following commutative diagram for each $v\in |C|$:
		\[
		\xymatrix{
			H^1(C, I_\alpha) \ar[d] \ar[r] & H^1(K, I_\alpha) \ar[d] \\
			H^1(\Spec \mathcal{O}_v, I_\alpha) \ar[d] \ar[r] & H^1(K_v, I_\alpha) \ar[d] \\
			H^1(\Spec \mathcal{O}_v, E_\alpha) \ar[r] & H^1(K_v, E_\alpha).
		}
		\]
But by Lang's theorem, we know that $H^1(\Spec \mathcal{O}_v, E_\alpha) = 0$ since $E_S$ has connected fibers. It follows that the composition map 
\[
  H^1(C, I_\alpha) \to H^1(K_v, E_\alpha)
\]
is trivial for all $v\in |C|$. The lemma follows.
\end{proof}

As a corollary, we obtain a natural map
\[
	\rho_\alpha: H^1(C,I_\alpha)=H^1(C, E_\alpha[2]) \to \Sel_2(E_\alpha) \teq \label{H1I-Sel2}
\]
for all maps $\alpha:C\to [S/\Gm]$ whose image is not contained in the discriminant locus.
	
\begin{prop} \label{prop:transversal_Sel_torsor}
If $\alpha: C\to [S/\Gm]$ is transversal to the discriminant locus in the sense of \ref{transversal}, then the homomorphism $\rho_\alpha: H^1(C, H^1(C, E_\alpha[2])) \to \Sel_2(E_\alpha)$ is an isomorphism.
\end{prop}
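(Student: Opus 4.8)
The plan is to identify both $H^1(C,I_\alpha)$ and $\Sel_2(E_\alpha)$, viewed as subgroups of $H^1(K,E_\alpha[2])$, with the subgroup of classes that are ``everywhere locally unramified'', and then to observe that $\rho_\alpha$ is by construction the restriction-to-the-generic-point map. The crux — and the only place transversality is genuinely needed — is the claim that the quasi-finite \'etale group scheme $I_\alpha=\alpha^*I_S\cong E_\alpha^{\sm}[2]$ (the isomorphism coming from Proposition~\ref{prop:E[2]}, with $E_\alpha^{\sm}$ the smooth locus of the Weierstrass model $E_\alpha$) coincides with the pushforward $j_*(E_{\alpha,K}[2])$, where $j\colon\Spec K\hookrightarrow C$ is the generic point.

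First I would record the structural consequences of transversality. Since $\alpha^*\Delta$ has a simple zero at each point $v$ of its support, $E_\alpha$ has Kodaira type $I_1$ there and type $I_0$ elsewhere; hence $E_\alpha$ is fibrewise minimal, all its geometric fibres are irreducible, $N:=E_\alpha^{\sm}$ is a smooth commutative group scheme over $C$ with connected fibres, and over each $\Spec\mathcal O_v$ it is the N\'eron model of $E_{\alpha,K}$. Because $\car k\neq 2$, multiplication by $2$ on $N$ is flat, \'etale, and surjective on geometric stalks, so it is an epimorphism of \'etale sheaves; we therefore have Kummer sequences $0\to I_\alpha\to N\xrightarrow{2}N\to 0$ of \'etale sheaves on $C$ and on each $\Spec\mathcal O_v$.

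Next I would extract the two local descriptions and compare them. On the Selmer side: from the local Kummer sequence and Lang's theorem $H^1(\mathcal O_v,N)=0$ — exactly the input used in Proposition~\ref{prop:Lang_theorem}, valid because $N$ has connected special fibre over a finite field — one gets $H^1(\mathcal O_v,I_\alpha)\cong N(\mathcal O_v)/2\cong E_{\alpha,K}(K_v)/2$, and comparison with the Kummer sequence over $K_v$ shows that its image in $H^1(K_v,E_\alpha[2])$ equals $\ker\bigl(H^1(K_v,E_\alpha[2])\to H^1(K_v,E_{\alpha,K})\bigr)$; hence $\Sel_2(E_\alpha)$ is precisely the set of $c\in H^1(K,E_\alpha[2])$ whose image in $H^1(K_v,E_\alpha[2])$ lies in $\mathrm{im}\,H^1(\mathcal O_v,I_\alpha)$ for every $v$. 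On the other side I would prove $I_\alpha=j_*(E_{\alpha,K}[2])$: the natural map $I_\alpha\to j_*(E_{\alpha,K}[2])$ is injective because $I_\alpha$, being locally at $v$ the disjoint union of a finite \'etale part and a piece supported over the generic point, has no sections supported at closed points; and it is an isomorphism on geometric stalks at each bad $v$ because for type $I_1$ the inertia invariants $E_\alpha[2]^{I_v}$ have order $2$, which is exactly the order of the fibre of $I_\alpha=N[2]$ there (over the good locus both sides are finite \'etale of degree $4$). The Leray spectral sequence for $j$ then gives $H^1(C,I_\alpha)=\ker\bigl(H^1(K,E_\alpha[2])\to H^0(C,R^1j_*E_\alpha[2])\bigr)$, and a routine inflation--restriction argument (using $\mathrm{cd}\,k(v)=1$) identifies the $v$-component of $H^0(C,R^1j_*E_\alpha[2])$ with $H^1(K_v,E_\alpha[2])/\mathrm{im}\,H^1(\mathcal O_v,I_\alpha)$. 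Thus $H^1(C,I_\alpha)$ is cut out inside $H^1(K,E_\alpha[2])$ by literally the same local conditions as $\Sel_2(E_\alpha)$, and since $\rho_\alpha$ is the restriction map — injective by the same Leray sequence — it is an isomorphism.

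The main obstacle is the sheaf-level identity $I_\alpha=j_*(E_{\alpha,K}[2])$, equivalently the assertion that passing to $H^1(C,-)$ imposes no local condition beyond the Selmer one. This fails without transversality: as soon as $\alpha^*\Delta$ has a zero of order $\geq 2$ at some $v$, the N\'eron model acquires a nontrivial component group, $E_\alpha^{\sm}$ no longer equals it, and the orders of $E_\alpha[2]^{I_v}$ and of the fibre of $E_\alpha^{\sm}[2]$ can disagree, so one only obtains that $\rho_\alpha$ is injective with a controlled cokernel — which is precisely why the square-free range of Theorem~\ref{thm:transversal} yields an equality whereas Theorem~\ref{thm:main} gives only bounds.
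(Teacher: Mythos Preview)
Your proof is correct and takes a genuinely different route from the paper's. The paper argues hands-on with torsors: for injectivity it observes that over each formal disc $C_v$ any $I_\alpha$-torsor is isomorphic to $I_\alpha$ itself, hence is the \'etale locus of its own normalisation, so globally $T$ can be reconstructed from $T_K$ by normalising and removing the ramification; for surjectivity it spreads a Selmer class over an open $U\subset C$ and extends across each missing point by using the N\'eron mapping property to realise the local torsor as a fibre of multiplication by $2$. Your argument is sheaf-theoretic: you prove the identity $I_\alpha=j_*(E_{\alpha,K}[2])$ on the nose, then use the Leray spectral sequence to identify $H^1(C,I_\alpha)$ with the classes in $H^1(K,E_\alpha[2])$ satisfying the condition ``lies in the image of $H^1(\mathcal O_v,I_\alpha)$'' at every $v$, and match that with the Selmer condition via the local Kummer sequence for $N$ and Lang's theorem. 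Both approaches rest on the same two inputs --- that under transversality $E_\alpha^{\sm}$ is the N\'eron model with connected fibres, and that $|E_\alpha[2]^{I_v}|=2$ at the bad places --- but your packaging makes the equality of local conditions a one-line consequence of standard machinery, whereas the paper's argument is more elementary and self-contained.

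One small correction to your final paragraph: without transversality $\rho_\alpha$ need not remain injective. You still have an inclusion of sheaves $I_\alpha\hookrightarrow j_*(E_{\alpha,K}[2])$, but when it is strict the cokernel is a skyscraper whose $H^0$ can map nontrivially into $H^1(C,I_\alpha)$, yielding classes that vanish generically; the paper says explicitly, just after this proposition, that $\rho_\alpha$ can fail to be both injective and surjective, which is why only the size comparison of Proposition~\ref{prop:inequalities} survives in general.
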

	
\begin{proof} The assumption $\alpha: C\to [S/\Gm]$ is transversal to the discriminant locus implies that $E_\alpha/C$ is a smooth group scheme with elliptic or multiplicative fibers, which is the global \Neron{} model of its generic fiber. 

Let $v$ be a geometric point of $C$ such that $c(v)$ lies in the discriminant locus. We denote $C_v$ the completion of $C\otimes_k \bar k$ at $v$, $\Spec(K_v)$ the generic point of $C_v$, and $I_v=\Gal(K_v)$. The transversality implies that $\Delta$ vanishes at $v$ to order 1. Using the description of the Tate curve, we know that
$$(E_\alpha(K_v)[2])^{I_v}=\mathbb Z/2\mathbb Z.$$
Geometrically, this means that over $C_v$, the \'etale group scheme $E_\alpha[2] $ is exactly the \'etale locus in its normalization over $C_v$. We deduce that globally, $E_\alpha[2]$ is exactly the \'etale locus in its normalization over $C$. 

This observation will allow us to prove the injectivity of the map 
\[
  H^1(C,E_\alpha[2])   	\to H^1(K,E_\alpha[2]).
\]
Indeed, let $T$ be an $E_\alpha[2]$-torsor over $C$. We will prove that $T$ is uniquely determined by its generic fiber. We observe that for every geometric point $v$ of $C$, the restriction of $T$ to the formal disc $C_v$ is isomorphic to the restriction of $E_\alpha[2]$. As $E_\alpha[2]$, $T$ restricted to $C_v$ is exactly the \'etale locus of its normalization over $C_v$. Hence, globally, $T$ can also be identified with the \'etale locus of its normalization over $C$. This means that we can reconstruct $T$ by removing the ramification locus from the normalization of its generic fiber. This proves the injectivity of $\rho_\alpha$.

We will now prove that $\rho_\alpha$ is surjective. Let $T_K$ be an $E_\alpha[2]$-torsor over $K$ whose isomorphism class lies in $\Sel_2(E_\alpha)$. We will show that the Selmer condition implies that $T$ can be extended as an $I$-torsor over $C$. We first spread $T$ to a $E_\alpha[2]$-torsor defined over some nonempty open subset $U$ of $C$. After that, we only need to prove that $T$ can be extended to a $E_\alpha[2]$-torsor over the formal discs $C_v$ around the remaining points, and thus we are reduced to a local problem.

The Selmer condition at $v$ implies that the class of $T$ in $H^1(K_v, E_\alpha[2])$ lies in the image of $E_\alpha(K_v)/2E_\alpha(K_v)$. There exists a point $x\in E_\alpha (K_v)$ such that the torsor $T_{K_v}$ fits in a cartesian diagram:
		\[
		\xymatrix{
			T_{K_v}	\ar[d] \ar[r] & E_{\alpha,K_v} \ar[d]^{\cdot 2} \\
			\Spec K_v \ar[r]^>>>>>x & E_{\alpha,K_v}
		}
		\]
Since $E_{\alpha,C_v}$ is the \Neron{} model of $E_{\alpha,K_v}$, the $K_v$-point $x$ of $E_\alpha$ can be extended as a $C_v$-point $\tilde x$. We can now extend the $E_{K_v}$-torsor $T_{K_v}$ to a $E_{C_v}$-torsor by forming the cartesian diagram
$$
		\xymatrix{
			T_{C_v}	\ar[d] \ar[r] & E_{\alpha,C_v} \ar[d]^{\cdot 2} \\
			C_v \ar[r]^>>>>>x & E_{\alpha,C_v}
		}
		$$
This completes the proof of surjectivity of $\rho_\alpha$. 
\end{proof}
	
In the case where $\alpha$ is not transversal to the discriminant locus, it can happen that the homomorphism $\rho_\alpha$ is neither surjective nor injective. Nevertheless, we can compare sizes of $\Sel_2(E_{\alpha})$ and $H^1(C,I_\alpha)$.
	
	\begin{prop} \label{prop:inequalities}
		Let $\alpha: C\to [S/\Gm]$ and suppose that the generic fiber of $E_\alpha$ is an elliptic curve. Then
		\[
		\begin{cases}
			|\Sel_2(E_\alpha)| \leq |H^1(C, I_\alpha)|, &\text{when } E_\alpha[2](K) = 0, \\
			|\Sel_2(E_\alpha)| \leq 4|H^1(C, I_\alpha)|, &\text{otherwise.}
		\end{cases}
		\]
	\end{prop}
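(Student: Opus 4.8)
The plan is to reduce the comparison of $|\Sel_2(E_\alpha)|$ and $|H^1(C, I_\alpha)|$ to a purely local analysis at the finitely many points $v$ where $\alpha$ meets the discriminant locus, exactly as in the proof of Proposition~\ref{prop:transversal_Sel_torsor}, but now keeping track of the possible failure of injectivity and surjectivity of $\rho_\alpha$ rather than proving both. I would begin by fixing a nonempty open $U \subset C$ over which $E_\alpha$ is smooth with elliptic or multiplicative fibers (so $E_\alpha|_U$ is the \Neron{} model over $U$) and on which the $E_\alpha[2]$-torsor picture is unobstructed, and then study the behavior at each bad point $v \in C \setminus U$ via the local cohomology of the formal disc $C_v$. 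As in Proposition~\ref{prop:Lang_theorem}, Lang's theorem kills $H^1(\Spec\mathcal O_v, E_\alpha)$ because $E_\alpha$ has connected fibers, so the map $H^1(C, I_\alpha) \to H^1(K, E_\alpha[2])$ still lands in $\Sel_2(E_\alpha)$, giving $\rho_\alpha$; the point is to bound the cokernel and kernel of $\rho_\alpha$.

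For injectivity, I would argue that an $E_\alpha[2]$-torsor $T$ over $C$ with trivial generic fiber is reconstructed from its generic fiber together with the local gluing data at the bad points, and the local indeterminacy at each $v$ is governed by $H^1(\mathcal O_v, E_\alpha[2]) \to H^1(K_v, E_\alpha[2])$, whose kernel is $H^1$ of the local Galois group of the residue field acting on $E_\alpha[2](\mathcal O_v^{sh})$ — a subquotient of $E_\alpha[2]$. Since globally such gluing data is constrained (the class in $H^1(C, E_\alpha[2])$ is determined by its restriction to the generic point up to the unramified classes, whose size is controlled by $E_\alpha[2](K)$), the kernel of $\rho_\alpha$ has order at most $|E_\alpha[2](K)|$, which is $1$ in the first case and at most $4$ in general. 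For surjectivity-type control, I would compare, at each bad $v$, the local Selmer image $E_\alpha(K_v)/2E_\alpha(K_v) \hookrightarrow H^1(K_v, E_\alpha[2])$ with the image of $H^1(\mathcal O_v, E_\alpha[2])$; when these differ, a Selmer class need not extend to a $C$-torsor, but the discrepancy is again a subquotient of $E_\alpha[2]$. A Poitou–Tate / global Euler characteristic style count then shows that the cokernel of $\rho_\alpha$ is at most $|E_\alpha[2](K)|$ as well, yielding $|\Sel_2(E_\alpha)| \le |E_\alpha[2](K)| \cdot |H^1(C, I_\alpha)| / |\ker \rho_\alpha|$, and in any case $|\Sel_2(E_\alpha)| \le |\coker\rho_\alpha|\cdot|H^1(C,I_\alpha)| \le 4|H^1(C,I_\alpha)|$, with the factor dropping to $1$ when $E_\alpha[2](K) = 0$ since then both kernel and the relevant cokernel are trivial.

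The cleanest way to organize all of this is probably to set up the global-to-local exact sequence comparing the $E_\alpha[2]$-torsor cohomology of $C$, the Selmer group, and the product of local terms at bad primes, then observe that all local discrepancy groups are subquotients of $E_\alpha[2]$ and that the only global input is $E_\alpha[2](K)$. The main obstacle I anticipate is the surjectivity bookkeeping: showing that a Selmer class, which extends over every local $C_v$ after possibly twisting, actually glues to a \emph{global} $I_\alpha$-torsor, and that the obstruction to doing so is bounded by $|E_\alpha[2](K)|$ rather than something larger. This is where one must use that $E_\alpha[2]$ is finite \'etale of rank $4$ together with a careful count — essentially the same mechanism that makes the transversal case an isomorphism, now run with inequalities. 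The case division in the statement is exactly the dichotomy between $E_\alpha[2](K)$ being trivial (where everything is rigid) and nontrivial (where one loses at most a factor of $|E_\alpha[2](K)| \le 4$).
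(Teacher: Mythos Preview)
Your local-to-global outline is reasonable in spirit, but it misses the concrete mechanism the paper uses and leaves the crucial step unproved. You correctly identify that the issue is bounding $|\coker\rho_\alpha|$, and you observe that at each bad place $v$ the discrepancy between the local Selmer image and the image of $H^1(\mathcal O_v, E_\alpha[2])$ is a subquotient of $E_\alpha[2]$. But there can be many bad places, so the product of these local discrepancies is not obviously bounded by $4$; your appeal to ``a Poitou--Tate / global Euler characteristic style count'' is exactly the step that needs an argument, and you do not supply one. (Your discussion of $\ker\rho_\alpha$ is also beside the point: only the cokernel bound is needed for the stated inequality.)

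The paper's proof replaces this vague global duality by a clean sheaf-theoretic comparison. The key object you are not using is the \emph{global} N\'eron model $\mathcal{E}$ of $E_{\alpha,K}$ over all of $C$ (not just over the good open $U$). The surjectivity half of the transversal argument shows, without any transversality hypothesis, that every Selmer class lifts to an $\mathcal{E}[2]$-torsor over $C$; hence $|\Sel_2(E_\alpha)| \le |H^1(C,\mathcal{E}[2])|$. One then compares $H^1(C,\mathcal{E}[2])$ with $H^1(C,E_\alpha[2]) = H^1(C,I_\alpha)$ via the short exact sequence $0 \to E_\alpha[2] \to \mathcal{E}[2] \to Q \to 0$ with $Q$ a skyscraper sheaf. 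Because $Q$ is supported on finitely many closed points, its cohomology is a sum of Galois cohomology groups of finite fields, so $|H^0(Q)| = |H^1(Q)|$; a multiplicative Euler-characteristic count along the long exact sequence then gives $|H^1(C,\mathcal{E}[2])| \le \dfrac{|H^0(C,\mathcal{E}[2])|}{|H^0(C,E_\alpha[2])|}\,|H^1(C,E_\alpha[2])|$, and this ratio is $1$ when $E_\alpha[2](K)=0$ and at most $4$ in general. This is precisely the ``global Euler characteristic'' you gesture at, but it only works because the comparison is run against the finite flat group scheme $\mathcal{E}[2]$ rather than directly with the merely quasi-finite $I_\alpha$.
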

	\begin{proof}
		From the proof of proposition~\ref{prop:transversal_Sel_torsor}, we always have
		\[
			|\Sel_2(E_\alpha)| \leq |H^1(C, \mathcal{E}[2])|,
		\]
    where $\mathcal{E}$ is the \Neron{} model of the generic fiber of $E_\alpha$ over $C$, since we can always lift a Selmer class to a torsor of $\mathcal{E}[2]$ over $C$. Note that in the proof of proposition~\ref{prop:transversal_Sel_torsor}, we lift the Selmer class to an $E[2]$-torsor over $C$, exploiting the isomorphism $E \cong \mathcal{E}$ in the transversal situation.
		
		From the short exact sequence of group schemes over $C$
		$$
		\xymatrix{
			0 \ar[r] & E_\alpha[2] \ar[r] & \mathcal{E}[2] \ar[r] & Q \ar[r] & 0,
		}
		$$
where $Q$ is a skyscraper sheaf, we have the following long exact sequence
		$$
		\xymatrix@C=1.3em{
			0 \ar[r] & H^0(E_\alpha[2]) \ar[r] & H^0(\mathcal{E}[2]) \ar[r] & H^0(Q) \ar[r] & H^1(E_\alpha[2]) \ar[r] & H^1(\mathcal{E}[2]) \ar[r] & H^1(Q) \ar[r] & L \ar[r] & 0.
		}
		$$
where $L$ is the kernel of the map $H^2(E_\alpha[2])\to H^2(\mathcal E[2])$.		
		
Since $Q$ is a skyscraper sheaf, its cohomology groups are direct sums of Galois cohomology groups of finite fields. It follows that
		$$
		|H^0(Q)| = |H^1(Q)|.
		$$
Using multiplicative Euler characteristic, combined with the fact that 
$$|H^0(E_\alpha[2])|=|H^0(\mathcal E[2])| =1$$ 
under the assumption $E_\alpha[2](K) = 0$, or 
$$|H^0(\mathcal{E}[2])|/|H^0(E_\alpha[2])| \leq 4,$$
without this assumption, we get the desired inequality. \end{proof}
	
	\subsection{The geometric setup} \label{subsec:geometric_setup}
	We can now define the moduli spaces $\mathcal{M}_\mathcal{L}$ and $\mathcal{A}_\mathcal{L}$ promised at the end of section~\ref{sec:Weierstrass_models}. First, we denote
	\begin{align*}
		\mathcal{M} &= \Hom(C, [BI_S/\Gm]) \\
		\mathcal{A} &= \Hom(C, [S/\Gm]).
	\end{align*}
We clearly have a map $\mathcal{M} \to \mathcal{A}$, compatible with the natural map to $\Bun_{\Gm} = \Hom(C, B\Gm)$. 

For a given line bundle $\mathcal{L} \in \Bun_\Gm(k)$ over $C$, we denote  $\mathcal{M}_\mathcal{L}$ and $\mathcal{A}_\mathcal{L}$ the fiber of $\mathcal{M}$ and $\mathcal{A}$ over $\mathcal{L}$. The space $\mathcal A_{\mathcal L}$ classifies family of Weierstrass curves of Hodge bundle $\mathcal L$. For a given $\alpha:C\to [S/\Gm]$, we denote $E_\alpha=\alpha^* E$ the induced family of Weierstrass elliptic curves and $E_\alpha[2]$ its 2-torsion subgroup. The fiber of $\mathcal M\to \mathcal A$ over $\alpha$, classifying $E_\alpha[2]$-torsor over $C$ is our replacement for the Selmer group $\Sel_2(E_{\alpha})$, for as shown in proposition \ref{prop:transversal_Sel_torsor}, there is a canonical isomorphism $H^1(C,E_\alpha[2]) \to {\rm Sel}_2(E_{\alpha})$ in case $\alpha$ is transversal to the discriminant locus, anh otherwise we have the inequality in 
proposition  \ref{prop:inequalities}.

Even though it is not easy to count points on $\Hom(C, [BI_S/\Gm])$ directly, the invariant theory of binary quartic forms allows us to represent $\mathcal M$ by yet another way. Namely, \eqref{BE2} induces an isomorphism:
\[
  \mathcal{M}= \Hom(C, [V^\reg/ G\times \Gm]).
\]
By definition, a $k$-point of $\mathcal{M}$ consists of a triple $(\mathcal{E}, \mathcal{L}, \alpha)$, where $\mathcal{E}$ is a $G$-torsor, $\mathcal{L}$ a line bundle, and $\alpha$ a section of $V(\mathcal{E}, \mathcal{L})^\reg = (V^\reg\times^G \mathcal{E}) \otimes \mathcal{L}^{\otimes 2}$. This new presentation is thus very convenient for counting points, since we are essentially counting sections of the vector bundle $V(\mathcal{E}, \mathcal{L}) = (V\times^G \mathcal{E})\otimes \mathcal{L}^{\otimes 2}$ satisfying some condition.

This suggests that instead of counting points on $\mathcal{M}$, we should count points on
\[
  \mathcal M'=\Hom(C, [V/ G\times \Gm]).
\]
and study the ratio between the two numbers. The $k$-points on $\mathcal{M}'$ are of course those triples $(\mathcal{E}, \mathcal{L}, \alpha)$ where $\mathcal{E}, \mathcal{L}$ are as above, and $\alpha$ is a section of $V(\mathcal{E}, \mathcal{L})$.

However, one needs to pay attention to the fact that for any line bundle $\mathcal L$, the number of $k$-points on $\mathcal M'_{\mathcal L}$ is infinite. In order to make sense of the ratio, one fix a $G$-bundle $\mathcal E$, and calculate the ratio 
\[
  \frac{|\mathcal M_{\mathcal E,\mathcal L}(k)|}{|\mathcal M'_{\mathcal E,\mathcal L}(k)|}
\]
as $\deg(\mathcal L) \to \infty$ while $\mathcal E$ being fixed. This ratio calculation will be performed in the next section following some ideas of Poonen.

	\section{On density}
	\subsection{Poonen's results}
	
	In this section, we will prove a density result that allows us to compute the difference between the number of sections to the regular part and the number of all sections. As the main ideas are already presented in~\cite{poonen_squarefree_2003}, we will only indicate necessary modifications in the proof. 
	
\begin{prop} \label{prop:Poonen_Density}
		Let $C$ be a smooth projective curve over $\mathbb{F}_q$, $\mathcal{E}$ a vector bundle over $C$ of rank $n$. Let $X\subset \mathcal{E}$ be a locally closed $\Gm$-stable subscheme of codimension at least 2 whose fiber at every point $v\in C$, $X_v \subset \mathcal{E}_v$ is also of codimension at least 2. Then the ratio 
\[
  \mu(X,\mathcal L)=\frac{|\{s\in \Gamma(C, \mathcal{E}\otimes \mathcal{L}): s\text{ avoids } X\otimes \mathcal{L}\}|}{|\Gamma(C, \mathcal{E} \otimes \mathcal{L})|}
\]
as $\deg(\mathcal L)\to \infty$, tends to the limit
\[
  \mu(X) := \lim_{\deg \mathcal{L} \to \infty} \mu(X,\mathcal L) = \prod_{v \in |C|} \left(1-\frac{c_v}{|k(v)|^n}\right),
\]
where $c_v = |X_v(k(v))|$, with $k(v)$ denoting the residue field at $v$.
\end{prop}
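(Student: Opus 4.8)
The plan is to follow Poonen's sieve argument from \cite{poonen_squarefree_2003}, adapted from the affine-space / integer-ring setting to sections of a twisted vector bundle over a curve. First I would set up the local-to-global bookkeeping: fix a large effective divisor $D$ (or work with $\mathcal{L}$ of large degree) and stratify the section space $\Gamma(C,\mathcal{E}\otimes\mathcal{L})$ according to the behavior of a section $s$ at each closed point $v$. For a fixed finite set $S$ of closed points, the probability that $s$ avoids $X\otimes\mathcal{L}$ at every $v\in S$ can be computed exactly once $\deg\mathcal{L}$ is large enough, by the following surjectivity statement: for $\deg\mathcal{L}\gg 0$ the reduction map $\Gamma(C,\mathcal{E}\otimes\mathcal{L})\to\bigoplus_{v\in S}(\mathcal{E}\otimes\mathcal{L})_v\otimes\mathcal{O}_v/\mathfrak{m}_v^{2}$ is surjective — this follows from the vanishing $H^1(C,\mathcal{E}\otimes\mathcal{L}(-2\sum_{v\in S}v))=0$, which holds once $\deg\mathcal{L}$ exceeds a bound depending on $S$, $\mathcal{E}$ and the genus. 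Here the key point is that because $X$ has codimension $\ge 2$ in $\mathcal{E}$ fiberwise, knowing $s$ modulo $\mathfrak{m}_v^2$ already detects whether $s$ meets $X\otimes\mathcal{L}$ at $v$ (a section meeting $X$ at $v$ to higher order still meets it), so the local density at $v$ is exactly $1-c_v/|k(v)|^n$, and the events at distinct $v\in S$ are independent in the limit.

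Next I would handle the truncation error, which is the heart of Poonen's argument. Write the count of good sections as the count of sections avoiding $X$ at all $v$ with $|k(v)|\le M$, minus the sections that avoid $X$ at all small $v$ but meet $X\otimes\mathcal{L}$ at some $v$ with $|k(v)|>M$. The main term, by the previous paragraph, converges as $\deg\mathcal{L}\to\infty$ to $\prod_{|k(v)|\le M}(1-c_v/|k(v)|^n)$, and this converges to the claimed Euler product as $M\to\infty$ precisely because $\sum_v c_v/|k(v)|^n$ converges — which uses the \emph{codimension $\ge 2$} hypothesis, since $c_v=|X_v(k(v))|=O(|k(v)|^{n-2})$, making the series dominated by $\sum_v |k(v)|^{-2}$, a convergent series (it is bounded by $\zeta_C(2)$ up to constants). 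So the real work is to show the error term is uniformly small: I would bound the number of sections meeting $X\otimes\mathcal{L}$ at a given $v$ with $|k(v)|>M$ by roughly $|k(v)|^{-2}|\Gamma(C,\mathcal{E}\otimes\mathcal{L})|$ (again using codimension $2$ and the surjectivity of reduction mod $\mathfrak{m}_v^2$, valid for the relevant range of $v$), and then sum over $v$; for $v$ in an intermediate range $M<|k(v)|\le \deg\mathcal{L}$ this surjectivity still applies and the sum over this range is $O(\sum_{|k(v)|>M}|k(v)|^{-2})\to 0$ as $M\to\infty$, uniformly in $\deg\mathcal{L}$.

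The genuinely delicate part — and the main obstacle — is the tail where $|k(v)|$ is comparable to or larger than $\deg\mathcal{L}$, i.e.\ the "medium and large primes" in Poonen's terminology, where the mod-$\mathfrak{m}_v^2$ surjectivity fails and one cannot compute the local density naively. Here I would import Poonen's quantitative argument directly: a nonzero section $s\in\Gamma(C,\mathcal{E}\otimes\mathcal{L})$ meeting $X\otimes\mathcal{L}$ at a large-norm point $v$ imposes strong constraints, and one bounds the number of such $(s,v)$ by a Bezout-type / dimension-counting estimate — one shows, for instance, that such $s$ lie in a low-dimensional subspace cut out by a partial derivative condition transverse to $X$, or one uses the fact that $X$ is contained in a hypersurface of controlled degree and counts incidences. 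The upshot is that the number of $(s,v)$ with $|k(v)|>\deg\mathcal{L}$ bad contributes $o(|\Gamma(C,\mathcal{E}\otimes\mathcal{L})|)$ as $\deg\mathcal{L}\to\infty$. Since \cite{poonen_squarefree_2003} proves exactly this kind of estimate in the arithmetic setting and the geometric analogue is formally parallel (replacing $\mathbb{Z}$-lattice point counts by $\mathbb{F}_q$-dimension counts, and the archimedean place by no place), I would state the adaptation and only indicate the necessary changes — chiefly that $\mathcal{L}$ having large degree plays the role of the large box, and that $H^1$-vanishing replaces the elementary counting of residues — rather than reproduce the full argument. Combining the main term limit with the vanishing of both error contributions yields $\mu(X)=\prod_{v\in|C|}(1-c_v/|k(v)|^n)$.
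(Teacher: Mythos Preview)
Your proposal is correct and follows essentially the same approach as the paper: truncate to closed points of bounded norm, compute the local densities exactly via surjectivity of the reduction map (justified by $H^1$-vanishing), and control the tail by passing to an affine open where $\mathcal{E}$ trivializes and invoking the estimates of \cite{poonen_squarefree_2003} directly. One small correction: since ``$s$ avoids $X$ at $v$'' depends only on the value $s(v)\in \mathcal{E}_v\otimes k(v)$, reduction modulo $\mathfrak{m}_v$ suffices (and is what the paper uses) rather than $\mathfrak{m}_v^2$; the codimension-$2$ hypothesis is not needed for local detection but only for convergence of the Euler product and for the tail bound, and the $\mathfrak{m}_v^2$ argument belongs instead to the transversality statement in Proposition~\ref{prop:Poonen_Density_transversal}.
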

	
	The main point of this result is that the density can be computed as the product of local densities, which are the factors in the product on the RHS of the formula above. Before starting the proof, we first prove the following lemma.
	
	\begin{lem} \label{lem:deg_uniform_infty}
	 Let $C$ be a smooth projective curve over $k$. There exists a finite set $S \subset |C|$ and a number $n$ such that for all line bundles $\mathcal{L}$ with $\deg \mathcal{L} > n$, there exists an effective divisor $D$ supported on $S$ such that $\mathcal{L} \cong \mathcal{O}_C(D)$. Moreover, we can choose $D_\mathcal{L} = \sum_{v\in S} a_v(\mathcal{L}) v$ for each $\mathcal{L}$ such that as $\deg\mathcal{L}$ goes to $\infty$, so does $a_v(\mathcal{L})$ for each $v\in S$.
	\end{lem}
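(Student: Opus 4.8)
The plan is to take $S$ to consist of a fixed rational point together with finitely many closed points whose degree-zero classes generate the finite group $\Pic^0(C)(k)$, to represent an arbitrary line bundle of large degree by an effective divisor supported on $S$ by subtracting off a large multiple of the rational point, and then to redistribute multiplicities among the points of $S$ using the torsion relations in $\Pic^0(C)(k)$ so that \emph{every} coefficient grows.

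Concretely, I would first invoke the standing hypothesis $C(k)\neq\emptyset$ to fix a rational point $v_0\in C(k)$ (so $\deg v_0=1$), and recall that $\Pic^0(C)(k)$ is finite: it is the group of $\mathbb{F}_q$-points of the Jacobian of $C$, an abelian variety over a finite field. Write $h=|\Pic^0(C)(k)|$. Since every line bundle on $C$ has a nonzero rational section, $\Pic(C)$ is generated by classes of closed points, and taking the degree-zero part shows $\Pic^0(C)(k)$ is generated by the classes $g_v:=[v]-\deg(v)[v_0]$ over closed points $v$; being finite, it is generated by finitely many of them, say $g_i:=g_{v_i}$ for $i=1,\dots,r$. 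Set $S=\{v_0,v_1,\dots,v_r\}$; if $r=0$ the lemma is already trivial with $D_{\mathcal L}=d\,v_0$, so assume $r\ge 1$.

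Next, given a line bundle $\mathcal L$ with $d:=\deg\mathcal L$, the class $\mathcal L\otimes\mathcal O_C(-d\,v_0)$ lies in $\Pic^0(C)(k)$, so I can write it as $\sum_{i=1}^r c_i g_i$ with the $c_i$ confined to a fixed finite range, say $0\le c_i<h$ (reducing the exponents modulo $h$, which annihilates every $g_i$). Unwinding this gives $\mathcal L\cong\mathcal O_C\big(k_0\,v_0+\sum_i c_i v_i\big)$ with $k_0=d-\sum_i c_i\deg(v_i)$, and since $\sum_i c_i\deg(v_i)$ is bounded by a constant $n_0$ depending only on $C$ and the chosen generators, $k_0$ is non-negative — indeed $k_0\ge d-n_0$ — as soon as $d>n_0$; this already yields the first assertion. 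For the "moreover" I would exploit that, by Lagrange's theorem, $h g_i=0$, i.e. $\mathcal O_C(h\,v_i)\cong\mathcal O_C\big(h\deg(v_i)\,v_0\big)$. Setting $\delta=h\sum_{i=1}^r\deg(v_i)$ and $t=t(\mathcal L)=\lfloor k_0/(2\delta)\rfloor$, and replacing, for each $i$, the chunk $th\deg(v_i)\,v_0$ by $th\,v_i$ via the relation above, I obtain
$$\mathcal L\cong\mathcal O_C\!\Big((k_0-t\delta)\,v_0+\sum_{i=1}^r(c_i+th)\,v_i\Big),$$
which is legitimate since the total number $t\delta$ of copies of $v_0$ removed satisfies $t\delta\le k_0/2\le k_0$. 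Taking $D_{\mathcal L}$ to be this divisor, its $v_0$-coefficient is at least $k_0/2\ge(d-n_0)/2$ and each $v_i$-coefficient is at least $th\ge h\lfloor (d-n_0)/(2\delta)\rfloor$; both tend to $\infty$ as $\deg\mathcal L\to\infty$. Choosing $n:=n_0+2\delta$ (so that $t\ge 1$ and the construction makes sense) completes the argument.

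I do not expect a serious obstacle here: the statement is elementary arithmetic of curves over a finite field. The only point requiring a little care is the last step — arranging that all of the coefficients $a_v(\mathcal L)$ grow simultaneously while the divisor keeps total degree exactly $d$ — which is precisely what the torsion-trading accomplishes; and one should not overlook the degenerate case $\Pic^0(C)(k)=0$, where $S=\{v_0\}$ suffices.
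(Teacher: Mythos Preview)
Your argument is correct. The use of the standing hypothesis $C(k)\neq\emptyset$ to fix a degree-$1$ point, the finiteness of $\Pic^0(C)(k)$, and the torsion-trading step are all sound; the arithmetic in the redistribution checks out, and the growth estimates for the coefficients are valid.

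Your route differs from the paper's. The paper does not single out a rational point; instead it picks closed points $Q_1,\dots,Q_m$ of relatively prime degrees so that the additive monoid they generate contains all large integers, and separately chooses enough $Q_j$ so that $\mathcal L(\sum Q_j)$ is effective for every $\mathcal L\in\Pic^0(C)(k)$, thus writing each degree-$0$ class as $\mathcal O_C(\sum P_i-\sum Q_j)$ for finitely many auxiliary points $P_i$. The set $S$ is then the union of the $P_i$ and $Q_j$. For the ``moreover'' clause the paper only sketches, saying it follows by an obvious modification; your torsion-trading via $h\,g_i=0$ makes this step completely explicit. The trade-off is that the paper's argument does not rely on the existence of a rational point (only on the existence of closed points of coprime degree, which holds over any finite field), whereas yours leans on the hypothesis $C(k)\neq\emptyset$ but in exchange is shorter and handles the growth of all coefficients transparently.
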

	
	\begin{proof}
We start with $m$ distinct points $Q_1,\ldots,Q_m \in |C|$ with $m$ being a big enough integer such that $\mathcal{L}(\sum_{j=1}^m Q_j)$ has non zero global sections for all line bundles $\mathcal{L} \in \Pic^0_{C/\mathbb{F}_q}(\mathbb{F}_q)$. It follows that every line bundle $\mathcal{L} \in \Pic^0_{C/\mathbb{F}_q}(\mathbb{F}_q)$ can be written as 
\[
  \mathcal L=\mathcal{O}_C\left(\sum_i P_i -\sum_{j=1}^m Q_j \right). \teq \label{PQ}
\]
Since $\Pic^0_{C/\mathbb{F}_q}(\mathbb{F}_q)$ is a finite set, there are finitely many points $P_i$ that may appear in \eqref{PQ}. We let $S$ be the union of all the $Q_j$ and $P_i$ appearing above. 

We also suppose that the points $Q_1,\ldots,Q_m$ have been chosen such that 
their degrees are relatively prime. It that case the monoid generated by $
\deg(Q_1),\ldots,\deg(Q_m)$ will contain all integers $d$ big enough. That is, there exists $N$ such that for all $d>N$, we can write 
$d=\sum_{j=1}^m d_i \deg(Q_i)$ 
with $d_i$ being positive integers. We can also choose the integers $d_j$ in 
such a way that each  $d_j\to \infty$ as $d\to \infty$.

Let $\mathcal{L} \in \Pic^d_{C/\mathbb{F}_q}(\mathbb{F}_q)$. If $d > N$, then we can write
		\[
			\mathcal{L} \cong \mathcal{O}\left(\sum_{j=1}^m d_i Q_i\right) \otimes \mathcal{L}', 
		\]
and $\deg \mathcal{L}' = 0$. Then by using \eqref{PQ}, we have 
    \[
			\mathcal{L} \cong \mathcal{O}\left(\sum P_i + \sum_{j=1}^m (d_j-1)Q_j\right),
		\]
where $P_i, Q_i \in S$. 
		
The last part of the lemma can be proved by an obvious modification of the argument above.
	\end{proof}
	
	\begin{rmk}
    From the proof of the lemma, we see at once that the set $S$ can always be made arbitrarily large.
  \end{rmk}
	
	Following~\cite[theorem 3.1]{poonen_squarefree_2003}, we will prove proposition~\ref{prop:Poonen_Density} by showing that we can compute the density as the limit of a finite product of densities over closed points where the sizes of the residue fields are bounded. The following lemma enables us to do so.

\begin{lem} \label{lem:density_key_lem}
Let $C, \mathcal{E}$ and $X$ be as in proposition~\ref{prop:Poonen_Density}. For each $M > 0$ we define
		$$
			\mathcal{Q}_{M, \mathcal{L}} = \{s\in \Gamma(X, \mathcal{E}\otimes\mathcal{L}): \exists v \in |C|, |k(v)| \geq M \text{ and } s_x \in X_x\}.
		$$
		Then
		$$
			\lim_{M\to \infty} \limsup_{\deg \mathcal{L} \to \infty} \frac{|\mathcal{Q}_{M, \mathcal{L}}|}{|\Gamma(X, \mathcal{E}\otimes\mathcal{L})|} = 0.
		$$
	\end{lem}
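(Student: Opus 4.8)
The plan is to adapt Poonen's sieving argument (\cite[lemma~3.1 and its proof]{poonen_squarefree_2003}) to our geometric setting, where the key input is the uniform control over line bundles provided by lemma~\ref{lem:deg_uniform_infty}. First I would fix, via lemma~\ref{lem:deg_uniform_infty}, a finite set $S \subset |C|$ and an integer $n$ so that every $\mathcal{L}$ with $\deg\mathcal{L} > n$ is isomorphic to $\mathcal{O}_C(D_\mathcal{L})$ with $D_\mathcal{L}$ effective and supported on $S$, with all coefficients $a_v(\mathcal{L})$ tending to $\infty$. Enlarging $S$ if necessary (using the remark after lemma~\ref{lem:deg_uniform_infty}), I may assume $S$ contains all closed points of small residue degree relevant to the bookkeeping below.

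The heart of the argument is a counting estimate: for a single closed point $v \in |C|$ the proportion of sections $s \in \Gamma(C, \mathcal{E}\otimes\mathcal{L})$ whose value at $v$ (more precisely, the jet at $v$ to the appropriate order, since $X_v$ has codimension $\geq 2$ in the fiber) lands in $X$ is bounded by roughly $c_v/|k(v)|^n \leq C/|k(v)|^2$ for a constant depending only on $\dim X$ and $\operatorname{rank}\mathcal{E}$, once $\deg\mathcal{L}$ is large enough relative to $v$ (so that $\mathcal{E}\otimes\mathcal{L}$ is globally generated with enough vanishing of $H^1$ to make evaluation at the $2$-jet surjective). The point where the codimension-$2$ hypothesis on $X_v$ enters is exactly here: it forces the local density to be $O(|k(v)|^{-2})$, which makes the tail sum convergent. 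Summing over closed points $v$ with $|k(v)| \geq M$, one gets
\[
\limsup_{\deg\mathcal{L}\to\infty} \frac{|\mathcal{Q}_{M,\mathcal{L}}|}{|\Gamma(C,\mathcal{E}\otimes\mathcal{L})|} \;\leq\; \sum_{\substack{v\in|C| \\ |k(v)|\geq M}} \frac{C}{|k(v)|^2},
\]
and since the curve $C$ has only $O(q^{d}/d)$ points of degree $d$, the series $\sum_{v\in|C|}|k(v)|^{-2}$ converges, so its tail tends to $0$ as $M\to\infty$. There is one subtlety to handle carefully: for a fixed $\deg\mathcal{L}$ there may be points $v$ with $|k(v)|$ comparable to $q^{\deg\mathcal{L}}$ where the "large $\deg\mathcal{L}$" hypothesis in the per-point estimate fails. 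This is precisely the issue Poonen resolves with a separate "medium/large degree" estimate, and I would import that argument: for points $v$ of degree between a slowly growing function of $\deg\mathcal{L}$ and $\deg\mathcal{L}$ itself, bound the number of offending sections by a cruder argument (a section vanishing to high order along a point of large degree is heavily constrained), and show this contribution is $o(|\Gamma(C,\mathcal{E}\otimes\mathcal{L})|)$ as well; the points of degree $\geq \deg\mathcal{L}$ impose so many conditions that essentially no section can have a jet in $X$ there.

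I would organize the proof in the order: (1) reduce to line bundles of the form $\mathcal{O}_C(D_\mathcal{L})$ via lemma~\ref{lem:deg_uniform_infty}; (2) establish the single-point estimate bounding the local proportion by $O(|k(v)|^{-2})$ for $\deg\mathcal{L}$ large relative to $\deg v$, using surjectivity of evaluation on $2$-jets; (3) handle the "large degree" points by a direct counting bound on sections with a prescribed high-order jet; (4) sum the geometric-series tail over $|k(v)| \geq M$ and let $M\to\infty$. The main obstacle is step~(3): the per-point estimate in step~(2) is only uniform once $\deg\mathcal{L}$ dominates $\deg v$, so the intermediate range of point-degrees must be dispatched by the more delicate argument of \cite[\S3]{poonen_squarefree_2003}, adapted from affine space to sections of a vector bundle on $C$ — this is where care with the geometry of $C$ (rather than $\mathbb{A}^n$) is genuinely needed, though conceptually it is the same argument.
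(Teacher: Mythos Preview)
Your outline is sound and would yield a correct proof, but it takes a longer route than the paper. The paper's argument is a two-line reduction: since only $M\gg 0$ matters, one may delete finitely many closed points and pass to an open affine $C'\subset C$ on which $\mathcal{E}$ is free; lemma~\ref{lem:deg_uniform_infty} guarantees that $C'$ can be chosen so that twisting by $\mathcal{L}$ with $\deg\mathcal{L}\to\infty$ corresponds to allowing polynomials of growing degree in a fixed affine coordinate ring. On $C'$ with $\mathcal{E}|_{C'}\cong\mathcal{O}_{C'}^{\oplus n}$, one is back in Poonen's original setting, and the codimension-$2$ hypothesis is used only to produce two generically coprime regular functions $f,g$ on $\mathcal{E}|_{C'}$ vanishing on $X|_{C'}$, after which \cite[theorem~8.1]{poonen_squarefree_2003} applies verbatim. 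Your approach instead re-runs Poonen's sieve in the vector-bundle setting, carrying along the per-point estimates and the medium/large-degree dichotomy; this is more work but has the advantage of being self-contained.

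One small correction: your invocation of $2$-jets in step~(2) is misplaced. For the present lemma the condition at $v$ is that the \emph{value} $s(v)$ lies in $X_v$, a codimension-$\geq 2$ subvariety of the fibre $\mathcal{E}_v\cong k(v)^n$; the $O(|k(v)|^{-2})$ bound comes from the Lang--Weil estimate $|X_v(k(v))|\ll |k(v)|^{n-2}$, not from any jet condition. The $2$-jet machinery belongs to proposition~\ref{prop:Poonen_Density_transversal}, where one is testing squarefreeness of $\Delta(s)$.
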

	\begin{proof}
		This statement is more or less a restatement of what is already proved in the first part of the proof of~\cite[theorem 8.1]{poonen_squarefree_2003} (see also~\cite[lemma 5.1]{poonen_squarefree_2003}). We will thus only indicate why this is the case.
		
		Since we are only interested in the case where $M \gg 0$, we can throw away as many points of $C$ as we want. We can therefore replace $C$ by any open affine subscheme $C'$ such that $\mathcal{E}$ is free over $C'$. Now, lemma~\ref{lem:deg_uniform_infty} implies that we can choose $C'$ such that our limit has the same form as the limit defined in~\cite[theorem 8.1]{poonen_squarefree_2003}.

    Observe that Poonen proves his limit for the case where $X|_{C'}$ is defined by 2 equations that are generically relative primes. But now, we can conclude by noting that since $X$ is of codimension at least 2, we can find such $f, g$ that both vanish on $X$ (see the proof of~\cite[lemma 5.1]{poonen_squarefree_2003}).
	\end{proof}
	\begin{proof}[Proof of~\ref{prop:Poonen_Density}]
		The proof of~\ref{prop:Poonen_Density} can be carried word for word from the proof of~\cite[theorem 3.1]{poonen_squarefree_2003}, where lemma~\ref{lem:density_key_lem} plays the role of~\cite[lemma 5.1]{poonen_squarefree_2003}. Indeed, if we denote
		$$
		  \mu(X_M) = \lim_{\deg\mathcal{L}\to \infty} \frac{|\{s\in \Gamma(C, \mathcal{E}\otimes \mathcal{L}): s\text{ avoids } X\otimes \mathcal{L} \text{ at all } v\in |C|, |k(v)|<M\}|}{|\Gamma(C, \mathcal{E} \otimes \mathcal{L})|},
		$$
		then lemma~\ref{lem:density_key_lem} implies that
		$$
		  \mu(X) = \lim_{M\to \infty} \mu(X_M).
		$$
		
		Note that the linear map 
		$$
		  \Gamma(C, \mathcal{E}\otimes \mathcal{L}) \to \prod_{\substack{v\in |C| \\ |k(v)|<M}} \mathcal{E}\otimes\mathcal{L}\otimes k(v) \cong \prod_{\substack{v\in |C| \\ |k(v)|<M}} \mathcal{E}\otimes k(v)
		$$
		is surjective when $\deg\mathcal{L} \gg 0$ due to the vanishing of
		$$
		  H^1\left(C, \mathcal{E}\otimes\mathcal{L}\left(-\sum_{\substack{v\in |C| \\ |k(v)|<M}} v\right)\right)
    $$
    when $\deg \mathcal{L}\gg 0$. Thus, we have
		$$
		  \mu(X_M) = \prod_{\substack{v\in |C| \\ |k(v)| < M}} \left(1-\frac{c_v}{|k(v)|^n}\right),
		$$
		where $c_v$ is defined as in proposition~\ref{prop:Poonen_Density}.
	\end{proof}
	
	Using a similar argument, we have the following result also.
	\begin{prop} \label{prop:Poonen_Density_transversal}
		Let $C, \mathcal{E}, X$ as above, and $D \subset \mathcal{E}$ be a subscheme defined by the vanishing of an equation $d: \mathcal{E} \to \mathcal{L}'$, where $\mathcal{L}'$ is a line bundle over $C$. Suppose that $d$ is generically square-free, then
		$$
			\lim_{\deg\mathcal{L} \to \infty} \frac{|\{s\in \Gamma(C, \mathcal{E}\otimes\mathcal{L}): s \in \mathcal{E} \setminus X \text{ and } s \text{ intersects D transversally}\}|}{|\Gamma(C, \mathcal{E}\otimes\mathcal{L})|} = \prod_{v\in |C|} \left(1 - \frac{c_v}{|k(v)|^{2n}}\right),
		$$
where $c_v$ is the number of elements $s$ in $\mathcal{E}\otimes \mathcal{O}_{C, v}/\mathfrak{m}_v^2$ such that $s$ lies in $X\otimes \mathcal{O}_{C, v}/\mathfrak{m}_v^2$ or $d(s) = 0$ in $\mathcal{O}_{C, v}/\mathfrak{m}_v^2$.
	\end{prop}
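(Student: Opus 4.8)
The plan is to mimic the proof of Proposition~\ref{prop:Poonen_Density} almost verbatim, with two modifications: replacing the local obstruction locus $X_v$ by a combined locus (where $s$ lands in $X$ or $s$ is tangent to $D$), and replacing the single jet space $\mathcal{E}\otimes k(v)$ by the second-order jet space $\mathcal{E}\otimes \mathcal{O}_{C,v}/\mathfrak{m}_v^2$, which is why the exponent in the local factors doubles from $n$ to $2n$. First I would reduce, exactly as in Lemma~\ref{lem:deg_uniform_infty} and Lemma~\ref{lem:density_key_lem}, to computing the density as a limit over finitely many closed points of bounded residue field: the tail contribution from high-degree points is controlled by the same Poonen-type estimate, using here that $D$ is generically square-free so that the locus of sections tangent to $D$ at $v$ is cut out (on a suitable affine open) by two generically coprime equations — this is precisely the input of \cite[lemma 5.1]{poonen_squarefree_2003}, combined again with the fact that $X$ has codimension $\geq 2$ so it too is cut out by two generically coprime equations. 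The union of the ``$X$-condition'' and the ``tangency to $D$'' condition at a point $v$ is then still amenable to the sieve, being a finite union of such loci.

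Next I would carry out the main term computation. For a fixed bound $M$, consider the map
\[
  \Gamma(C, \mathcal{E}\otimes\mathcal{L}) \longrightarrow \prod_{\substack{v\in|C|\\ |k(v)|<M}} \mathcal{E}\otimes\mathcal{L}\otimes \mathcal{O}_{C,v}/\mathfrak{m}_v^2 \;\cong\; \prod_{\substack{v\in|C|\\ |k(v)|<M}} \mathcal{E}\otimes \mathcal{O}_{C,v}/\mathfrak{m}_v^2,
\]
the last isomorphism being non-canonical but harmless. As in the proof of Proposition~\ref{prop:Poonen_Density}, this map is surjective for $\deg\mathcal{L}\gg 0$, now using the vanishing of
\[
  H^1\!\left(C,\, \mathcal{E}\otimes\mathcal{L}\!\left(-2\!\!\sum_{\substack{v\in|C|\\ |k(v)|<M}}\!\! v\right)\right)
\]
for $\deg\mathcal{L}$ large. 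Hence the probability that a random section avoids $X$ and meets $D$ transversally at every point of residue field $<M$ equals the product over such $v$ of $\bigl(1 - c_v/|k(v)|^{2n}\bigr)$, where $c_v$ counts the second-order jets at $v$ that lie in $X$ modulo $\mathfrak{m}_v^2$ or on which $d$ vanishes modulo $\mathfrak{m}_v^2$ — exactly the $c_v$ in the statement. Here one must note that ``$s$ intersects $D$ transversally at $v$'' is detected by the $2$-jet: if $d(s)$ is a unit at $v$ then $s$ avoids $D$ there; if $d(s)\in\mathfrak{m}_v\setminus\mathfrak{m}_v^2$ the intersection is transverse; and $d(s)\in\mathfrak{m}_v^2$ is the excluded (non-transverse) event, which is the condition ``$d(s)=0$ in $\mathcal{O}_{C,v}/\mathfrak{m}_v^2$''.

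Finally I would let $M\to\infty$ and invoke the truncation lemma (the analogue of Lemma~\ref{lem:density_key_lem} for the combined locus) to conclude that the full density is the infinite product $\prod_{v\in|C|}\bigl(1-c_v/|k(v)|^{2n}\bigr)$, and that this product converges. The main obstacle, and the only place any real work is needed beyond bookkeeping, is verifying the uniform tail estimate for the tangency condition: one needs that for $|k(v)|$ large the fraction of sections non-transverse to $D$ at $v$ is $O(|k(v)|^{-2})$ uniformly, and that these events over distinct large-degree points are sufficiently independent for the second-moment/sieve argument of Poonen to apply. Since $d$ is generically square-free, away from a finite set the scheme $D$ is smooth over $C$ of relative codimension one, so the non-transversality locus in the $2$-jet space has codimension $2$, giving the needed bound; the independence and the passage to the limit are then formally identical to \cite[theorem 3.1]{poonen_squarefree_2003}, so I would simply point to that argument rather than reproduce it.
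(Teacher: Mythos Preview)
Your proposal is correct and follows essentially the same approach as the paper: reduce to an open affine where $\mathcal{E}$ and $\mathcal{L}'$ trivialize, use surjectivity onto products of second-order jet spaces for the main term, and bound the tail by splitting the error into the transversality part (handled by \cite[theorem 8.1]{poonen_squarefree_2003}, since $d$ is generically square-free) and the $X$-avoidance part (handled by Lemma~\ref{lem:density_key_lem}), each of which goes to zero. The paper's proof is terser but structurally identical; your explicit remark that the $2$-jet detects transversality and your $H^1$-vanishing with the divisor doubled are exactly the bookkeeping the paper leaves implicit.
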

	\begin{proof}
    The proof of this proposition is almost identical to the one above. As we have seen, all we need to do is to prove the analog of lemma~\ref{lem:density_key_lem} for this case. Observe also that we only need to prove such a lemma for a suitable open affine sub-curve $C'$ which can be chosen such that $\mathcal{E}|_{C'}$ and $\mathcal{L}|_{C'}$ are free. In this case, $d$ is just a generically square-free polynomial with coefficient in $\Gamma(C', \mathcal{O}_{C'})$.

    If $X$ is an empty scheme, this is already done in~\cite[theorem 8.1]{poonen_squarefree_2003}. When $X$ is not empty then we see that the error term is bounded above by the sum of the error term in the case where $X$ is empty and the error term given in~\ref{lem:density_key_lem} above. But since both go to zero as $M$ goes to infinity, we are done.
	\end{proof}
	
	\subsection{Some density computations}
	
	In this subsection, for brevity's sake, we will use $V(\mathcal{E})$ and $V(\mathcal{E})^\reg$ to denote $V(\mathcal{E}, \mathcal{O}_C)$ and $V(\mathcal{E}, \mathcal{O}_C)^\reg$ respectively (see the notation in subsection~\ref{subsec:geometric_setup}), where $\mathcal{E}$ denotes an arbitrary fixed $G$-torsor.
	
	\begin{prop} \label{prop:density_reg}
	The density of $V(\mathcal{E})^\reg$ inside $V(\mathcal{E})$ is $\zeta_C(2)^{-1}$.	\end{prop}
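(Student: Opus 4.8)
The plan is to apply Poonen's density theorem (proposition~\ref{prop:Poonen_Density}) to the rank $5$ vector bundle $V(\mathcal{E})$ over $C$ and the closed subscheme $X := V(\mathcal{E}) \setminus V(\mathcal{E})^\reg$, which is the twist by the $G$-torsor $\mathcal{E}$ of $Z := V \setminus V^\reg$. Since $V^\reg$ is stable under the $\Gm$-action $c\cdot f = c^2 f$ (which preserves root multiplicities and hence the types), so is $Z$, and therefore so is $X$. Granting for the moment that $Z$, and hence every geometric fibre $X_v$, has codimension $\ge 2$, proposition~\ref{prop:Poonen_Density} gives
\[
  \mu(X) = \prod_{v \in |C|}\left(1 - \frac{c_v}{|k(v)|^{5}}\right), \qquad c_v = |X_v(k(v))|.
\]
As $k(v)$ is finite and $G = \PGL_2$ is connected, Lang's theorem makes $\mathcal{E}_v$ a trivial $G$-torsor, so $X_v \simeq Z_{k(v)}$ and $c_v = |Z(k(v))|$. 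It therefore suffices to describe $Z$ and to count its $k(v)$-points.

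By proposition~\ref{prop:orbits_on_fibers}, $Z$ is the union of the orbits of types $(2,2)$, $(4)$ and $\{0\}$, that is, the locus of binary quartics all of whose roots have even multiplicity. Equivalently, on geometric points $Z$ is the image of the squaring map
\[
  \sigma\colon \Sym^2\std\otimes\det^{-1} \longrightarrow V = \Sym^4\std\otimes\det^{-2}, \qquad q \longmapsto q^{2},
\]
which is $G$-equivariant since the centre of $\GL_2$ acts trivially on the source: indeed $\ell_1^2\ell_2^2 = (\ell_1\ell_2)^2$, $\ell^4 = (\ell^2)^2$ and $0 = 0^2$. Because $\car k \ne 2$, the fibres of $\sigma$ are the sets $\{\pm q\}$, so $\sigma$ is quasi-finite and $\dim Z = 3$; hence $Z$ has codimension $2$ in the $5$-dimensional $V$, and likewise each fibre $X_v$ has codimension $2$, which discharges the hypothesis used above.

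The heart of the matter is the local count $c_v = |Z(k(v))|$; write $\kappa = k(v)$. Every $\kappa$-point $f$ of $Z$ equals $q^2$ for a binary quadratic $q$ over $\bar\kappa$, unique up to sign, and $f = \mathrm{Frob}(f) = \mathrm{Frob}(q)^2$ forces $\mathrm{Frob}(q) = \pm q$. Thus $q$ lies either in the $3$-dimensional $\kappa$-vector space $W_0$ of binary quadratics over $\kappa$, or in $W_1 := \{q : \mathrm{Frob}(q) = -q\}$. Since $\bar\kappa$ is algebraically closed, $-1$ has a $(|\kappa|-1)$-th root $\delta \in \bar\kappa^{\times}$, and then $\mathrm{Frob}(\delta) = \delta^{|\kappa|} = \delta\cdot\delta^{|\kappa|-1} = -\delta$; consequently multiplication by $\delta$ is a $\kappa$-linear bijection $W_1 \to W_0$, so $|W_1| = |\kappa|^{3}$. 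On each of $W_0$ and $W_1$ the squaring map is two-to-one away from $0$, and the two images inside $Z(\kappa)$ meet only in $0$ (a common square root would lie in $W_0 \cap W_1$, which is $0$ as $\car k \ne 2$). Inclusion--exclusion gives
\[
  c_v = 2\left(\frac{|\kappa|^{3}-1}{2} + 1\right) - 1 = |\kappa|^{3} = |k(v)|^{3}.
\]
Substituting this into the product formula yields $\mu(X) = \prod_{v\in|C|}\bigl(1 - |k(v)|^{-2}\bigr) = \zeta_C(2)^{-1}$, as claimed. The main obstacle is precisely this point count: one must notice that $Z(k(v))$ contains, besides the squares of binary quadratics defined over $k(v)$, also the squares of those defined only over the quadratic extension on which Frobenius acts by $-1$, and the substitution by $\delta$ is what enumerates the latter.
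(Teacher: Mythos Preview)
Your proof is correct and follows essentially the same approach as the paper: apply Poonen's density theorem to the non-regular locus, identify it with the set of ``squares'' of binary quadratics, and count $|k(v)|^{3}$ points in each fibre. The only cosmetic difference is in the bookkeeping of the point count: the paper parametrises $Z(k(v))$ as squares of $k(v)$-quadratics scaled by $k(v)^{\times}/k(v)^{\times 2}$, whereas you split the square roots into the Frobenius eigenspaces $W_0$ and $W_1$; these are the same thing, since your $\delta^{2}$ is a non-square in $k(v)$ and $W_1 = \delta W_0$. You are slightly more thorough than the paper in explicitly verifying the codimension-$2$ hypothesis and invoking Lang's theorem to trivialise $\mathcal{E}_v$.
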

	\begin{proof}
		By proposition~\ref{prop:Poonen_Density}, it suffices to show that the local density at a point $v\in |C|$ of the regular part is $1-|k(v)|^{-2}$. For this, we first count the number of points in the non-regular part. By the classification of different orbits on $V$, we know that a point $f$ in the non-regular part must be of type $(2, 2)$ or $(4)$ or $0$. Thus, we see at once that up to a scalar multiple, $f$ is a square of a quadratic polynomial.
		
		Note that the squaring map (from quadratic to quartic polynomials) is a two to one map, except at the $0$ polynomial. The image of the map is not surjective on the non-regular part, and the missing points are precisely those which are a scale of a point in the image by a non-square element in $k(v)^\times$. Thus, the number of points in the non-regular part is
		$$
		  \frac{|\{\text{non-zero binary quadratic polynomials}\}|}{2} |k(v)^\times/k(v)^{\times 2}| + 1 = \frac{|k(v)|^3-1}{2} 2 + 1 = |k(v)|^3.
		$$
		Thus, the local density of the regular part is
		$$
			\frac{|k(v)|^5-|k(v)|^3}{|k(v)|^5} = 1 - |k(v)|^{-2}.
		$$
	\end{proof}
	
	\begin{prop} \label{prop:density_(a,b)_transversal}
		The density of $(a,b) \in \Gamma(C, \mathcal{L}^{\otimes 4} \oplus \mathcal{L}^{\otimes 6})$ transversal to the discriminant locus among all pairs $(a,b)$ is
		$$
			\prod_{v\in |C|} (1-2|k(v)|^{-2} + |k(v)|^{-3}).
		$$
	\end{prop}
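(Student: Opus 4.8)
The plan is to deduce this from the Poonen-type density formula of Proposition~\ref{prop:Poonen_Density_transversal}, applied with empty excluded locus to the rank-two bundle $\mathcal{L}^{\otimes 4}\oplus\mathcal{L}^{\otimes 6}$ (whose global sections are precisely the pairs $(a,b)$) and to the discriminant map
\[
  d=\Delta\colon \mathcal{L}^{\otimes 4}\oplus\mathcal{L}^{\otimes 6}\longrightarrow \mathcal{L}^{\otimes 12},\qquad (a,b)\longmapsto 4a^3+27b^2 .
\]
Two routine preliminaries are needed. First, the hypothesis that $d$ be generically square-free holds: viewed as a quadratic polynomial in $b$ over $K(a)$, the form $4a^3+27b^2$ has discriminant $-432a^3\neq 0$, so it is square-free (indeed irreducible) in $K[a,b]$. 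Second, one must check that the condition ``$s$ intersects $D$ transversally'', where $D=d^{-1}(0)$ is the zero hypersurface inside the total space, coincides with the condition of Definition~\ref{transversal} that the divisor $\Delta(a,b)=0$ on $C$ be multiplicity free; this is immediate from the description of the tangent space to $D$. (Proposition~\ref{prop:Poonen_Density_transversal} is phrased for sections of a bundle of the shape $\mathcal{E}\otimes\mathcal{L}$ with $\mathcal{E}$ fixed, but its proof uses only that the degrees of the line bundles in play tend to infinity uniformly, which holds for $\mathcal{L}^{\otimes 4}\oplus\mathcal{L}^{\otimes 6}$ as $\deg\mathcal{L}\to\infty$.) Granting this, and noting the rank is $n=2$ so that $2n=4$, the density equals $\prod_{v\in|C|}\bigl(1-c_v|k(v)|^{-4}\bigr)$, where $c_v$ counts the pairs $(a,b)$ in $(\mathcal{L}^{\otimes 4}\oplus\mathcal{L}^{\otimes 6})\otimes\mathcal{O}_{C,v}/\mathfrak{m}_v^2$ with $\Delta(a,b)=0$ in $\mathcal{O}_{C,v}/\mathfrak{m}_v^2$; it then remains to prove $c_v=2|k(v)|^2-|k(v)|$.

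Set $q_v=|k(v)|$ and, after trivializing $\mathcal{L}$ on the length-two thickening of $v$, identify $\mathcal{O}_{C,v}/\mathfrak{m}_v^2$ with $k(v)[\epsilon]/(\epsilon^2)$. Writing $a=a_0+a_1\epsilon$ and $b=b_0+b_1\epsilon$ with $a_i,b_i\in k(v)$, one finds
\[
  \Delta(a,b)=\bigl(4a_0^3+27b_0^2\bigr)+\bigl(12a_0^2a_1+54b_0b_1\bigr)\epsilon ,
\]
so $c_v$ is the number of quadruples $(a_0,b_0,a_1,b_1)$ with $4a_0^3+27b_0^2=0$ and $12a_0^2a_1+54b_0b_1=0$. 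The reduction locus $4a^3+27b^2=0$ is the affine cuspidal cubic; it is parametrized by $t\mapsto(-3t^2,2t^3)$, and since $\car k\neq 2,3$ this map is a bijection on $k(v)$-points (inverse $(a,b)\mapsto -3b/(2a)$ on the locus $a\neq 0$), so there are exactly $q_v$ points $(a_0,b_0)$, the unique one with $a_0=0$ being the cusp $(0,0)$. At the cusp the equation for $(a_1,b_1)$ is vacuous, contributing $q_v^2$; at each of the remaining $q_v-1$ points one has $a_0\neq 0$, hence $12a_0^2\neq 0$ (again using $\car k\neq 2,3$), so $12a_0^2a_1+54b_0b_1=0$ is a nontrivial linear equation with exactly $q_v$ solutions. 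Thus $c_v=q_v^2+(q_v-1)q_v=2q_v^2-q_v$ and $1-c_vq_v^{-4}=1-2q_v^{-2}+q_v^{-3}$, which gives the stated product.

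The substantive point is the local count, and within it the recognition that the reduction of the discriminant hypersurface is a cuspidal cubic with $q_v$ rational points; the count then splits transparently into the cusp, where $\Delta$ vanishes to order at least $2$ for every first-order deformation (so all $q_v^2$ jets are counted), and the $q_v-1$ smooth points, where transversality is a single nonzero linear constraint cutting the $q_v^2$ jets down to $q_v$. I expect the only things requiring real care are the precise identification of the two transversality notions and keeping track of where $\car k\neq 2,3$ is used; the rest is a short computation.
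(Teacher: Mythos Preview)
Your proof is correct and follows essentially the same route as the paper's: both invoke Proposition~\ref{prop:Poonen_Density_transversal}, reduce to a local count over $R=k(v)[\epsilon]/(\epsilon^2)$, split according to whether the reduction $(\bar a,\bar b)$ is the cusp $(0,0)$ or a smooth point of the discriminant curve, and arrive at $c_v=2q_v^2-q_v$. The only differences are cosmetic: you parametrize the cuspidal cubic explicitly by $t\mapsto(-3t^2,2t^3)$ where the paper simply writes $|\mathbb{G}_a(k(v))|-1$ for the nonzero points, and you spell out the linear tangent-space equation where the paper appeals to smoothness of $\Delta$ away from the origin; you are also more careful about verifying the square-free hypothesis and the match between the two transversality notions, which the paper leaves implicit.
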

	\begin{proof}
		By proposition~\ref{prop:Poonen_Density_transversal}, it suffices to show that the local density at a point $v\in |C|$ of the transversal part is $1-2|k(v)|^{-2} + |k(v)|^{-3}$. 
		
		Let denote $R=k(v)[\eps]/(\eps^2)$. We observe that $(a,b) \in S(R) = R^2$ is in the transversal part if and only if $\Delta(a,b) \neq 0$ in $R$.
If $(a,b) \in S(R)$, then we denote $(\lbar{a}, \lbar{b}) \in S(k(v))$ the associated $k(v)$-point, by reduction. Observe that $\Delta: \mathbb{A}^2 \to \mathbb{A}^1$ is smooth on $S-\{(0, 0)\}$. In particular, when $(a,b) \in S(R)$ such that $(\lbar{a}, \lbar{b}) \neq (0, 0)$, then the fiber of $T_{(\lbar{a}, \lbar{b})} S \to	T_{\Delta(\lbar{a}, \lbar{b})}$ has dimension exactly one. Thus, the number of non-transversal pairs $(a,b) \in S(R)$ is
		\begin{align*}
			\sum_{\substack{(\lbar{a}, \lbar{b})\neq(0,0)\\ \Delta(\lbar{a}, \lbar{b})=0}} |k(v)| + \sum_{(\lbar{a}, \lbar{b}) = (0, 0)} |k(v)|^2 
			&= |k(v)|(|\Ga(k(v))|-1) + |k(v)|^2 \\
			&= |k(v)|(|k(v)|-1) + |k(v)|^2 \\
			&= 2|k(v)|^2 - |k(v)|.			
		\end{align*}
		Thus, the local density of transversal pairs is
		$$
			\frac{|k(v)|^4-2|k(v)|^2+|k(v)|}{|k(v)|^4} = 1 - 2|k(v)|^{-2} + |k(v)|^{-3},
		$$
		where we have used $|R|^2 = |k(v)|^4$.
	\end{proof}
	
	\begin{prop} \label{prop:density_V_reg_transversal}
		The density of sections in $V(\mathcal{E})$ that are in $V(\mathcal{E})^\reg$ whose associated pair $(a,b)$ is transversal to the discriminant is
		$$
			\prod_{v\in |C|} (1-|k(v)|^{-2})(1-2|k(v)|^{-2} + |k(v)|^{-3}).
		$$
	\end{prop}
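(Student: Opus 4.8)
The plan is to invoke the transversality version of Poonen's sieve, Proposition~\ref{prop:Poonen_Density_transversal}, with the rank-$5$ bundle $V(\mathcal{E})$ playing the role of $\mathcal{E}$ there, with $X = V(\mathcal{E})\setminus V(\mathcal{E})^\reg$, and with $D\subset V(\mathcal{E})$ the zero locus of $d = \Delta\circ\pi\colon V(\mathcal{E})\to\mathcal{O}_C$, the pullback of the discriminant from $S$. The hypotheses hold: $X$ is closed and $\Gm$-stable (the $\Gm$-action $f\mapsto c^2f$ preserves orbit types), and it has codimension at least $2$ globally and in every fibre thanks to Proposition~\ref{prop:Vreg_open_dense} and the count $|(V\setminus V^\reg)(k(v))| = |k(v)|^3 = |k(v)|^{5-2}$ from the proof of Proposition~\ref{prop:density_reg}; and $d$ is generically square-free because the discriminant hypersurface in $V$ is reduced. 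Proposition~\ref{prop:Poonen_Density_transversal} then presents the density in question as $\prod_{v\in|C|}\bigl(1 - c_v/|k(v)|^{10}\bigr)$, where, writing $q_v = |k(v)|$ and $R = k(v)[\eps]/(\eps^2)$, the integer $c_v$ counts the $f\in V(R)$ whose reduction $\bar f$ lies outside $V^\reg(k(v))$, or for which $\Delta(a(f),b(f)) = 0$ in $R$.

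Everything then comes down to evaluating $c_v$, which I would do by inclusion--exclusion. Let $A\subset V(R)$ be the set of $f$ with $\bar f\notin V^\reg(k(v))$ and $B\subset V(R)$ the set of $f$ with $\Delta(a(f),b(f)) = 0$ in $R$, so $c_v = |A| + |B\setminus A|$. Since $V^\reg$ is open in $V$, $A$ is the union of the fibres of the (free, rank-$5$) reduction map $V(R)\to V(k(v))$ over $(V\setminus V^\reg)(k(v))$, so $|A| = q_v^3\cdot q_v^5 = q_v^8$. For $B\setminus A$, use that $\pi\colon V^\reg\to S$ is smooth of relative dimension $3$ (Proposition~\ref{prop:Vreg_open_dense}) and surjective, as it admits the Weierstrass section; hence $\pi\colon V^\reg(R)\to S(R)$ is surjective, and for any $(a,b)\in S(R)$ with reduction $(\bar a,\bar b)$ the reduction map $\{f\in V^\reg(R)\colon \pi(f) = (a,b)\}\to V^\reg_{(\bar a,\bar b)}(k(v))$ is $q_v^3$-to-one. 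Writing $\delta\subset S$ for the discriminant curve, $B\setminus A$ is the $\pi$-preimage of $\delta(R) = \{(a,b)\in S(R)\colon \Delta(a,b) = 0 \text{ in } R\}$, and therefore $|B\setminus A| = q_v^3\sum_{(a,b)\in\delta(R)} |V^\reg_{(\bar a,\bar b)}(k(v))|$.

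The crux is the uniform identity $|V^\reg_{(\bar a,\bar b)}(k(v))| = |\PGL_2(k(v))| = q_v^3 - q_v$ for every $(\bar a,\bar b)\in S(k(v))$. By Proposition~\ref{prop:orbits_on_fibers}, $V^\reg_{(\bar a,\bar b)}$ is a single geometric $G$-orbit in each of the three cases, and it carries the $k(v)$-rational point $s(\bar a,\bar b)$ from the Weierstrass section, so it is isomorphic to $G/I_{(\bar a,\bar b)}$ with $I_{(\bar a,\bar b)}$ the finite \'etale \emph{abelian} stabilizer. A standard count using the exact sequence attached to $1\to I_{(\bar a,\bar b)}\to G\to G/I_{(\bar a,\bar b)}\to 1$ over the finite field $k(v)$ then applies: one has $H^1(k(v),\PGL_2) = 1$ by Lang's theorem, so $(G/I_{(\bar a,\bar b)})(k(v))$ breaks into $|H^1(k(v),I_{(\bar a,\bar b)})|$ orbits under $G(k(v))$, each of size $|G(k(v))|/|I_{(\bar a,\bar b)}(k(v))|$ because abelianness of $I_{(\bar a,\bar b)}$ makes all the relevant twisted point-stabilizers again of order $|I_{(\bar a,\bar b)}(k(v))|$; and $|H^1(k(v),I_{(\bar a,\bar b)})| = |I_{(\bar a,\bar b)}(k(v))|$ since a finite $\widehat{\mathbb{Z}}$-module has as many invariants as coinvariants. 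Thus $|(G/I_{(\bar a,\bar b)})(k(v))| = |G(k(v))| = q_v^3 - q_v$. Granting this, $\sum_{(a,b)\in\delta(R)} |V^\reg_{(\bar a,\bar b)}(k(v))| = (q_v^3 - q_v)\,|\delta(R)|$, while $|\delta(R)| = 2q_v^2 - q_v$ is exactly the number of non-transversal pairs found in the proof of Proposition~\ref{prop:density_(a,b)_transversal}. Hence $c_v = q_v^8 + q_v^3(q_v^3-q_v)(2q_v^2-q_v)$, and a one-line expansion gives $1 - c_v/q_v^{10} = 1 - 3q_v^{-2} + q_v^{-3} + 2q_v^{-4} - q_v^{-5} = (1-q_v^{-2})(1-2q_v^{-2}+q_v^{-3})$, which is the claimed local factor; taking the product over $v$ finishes the proof.

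The main obstacle is exactly that uniform fibre count $|V^\reg_{(\bar a,\bar b)}(k(v))| = |\PGL_2(k(v))|$: a priori this number could depend on the orbit type and on the arithmetic of the stabilizer --- which ranges over forms of $(\mathbb{Z}/2)^2$, of $\mathbb{Z}/2$, and the trivial group --- and the real content is that the extra torsors counted by $H^1(k(v),I_{(\bar a,\bar b)})$ exactly offset the change of denominator $|I_{(\bar a,\bar b)}(k(v))|$, with $H^1(k(v),\PGL_2)$ vanishing. Everything else is bookkeeping built on the two local computations of Propositions~\ref{prop:density_reg} and~\ref{prop:density_(a,b)_transversal} and an elementary polynomial identity.
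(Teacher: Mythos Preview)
Your proof is correct, and the route is genuinely different from the paper's. The paper argues case by case over orbit types: it first counts the $k(v)$-points of type $(1,1,2)$ and of type $(1,3)$ by direct parametrization (getting $q(q^2-1)(q-1)$ and $q(q^2-1)$ respectively), and then for each type determines $|\ker d\Delta_f|$ by combining the smoothness of $V^\reg\to S$ with the (non)smoothness of $\Delta\colon S\to\mathbb{A}^1$ at the corresponding image point. Summing $q^8 + q^4\cdot q(q^2-1)(q-1) + q^5\cdot q(q^2-1)$ recovers your $c_v$.

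Your argument replaces this orbit-by-orbit bookkeeping with a single structural fact: every fibre $V^\reg_{(\bar a,\bar b)}$ is $G/I$ for a finite \'etale abelian $I$ with a rational point, and Lang's theorem together with $|H^1(k(v),I)| = |I(k(v))|$ forces $|(G/I)(k(v))| = |G(k(v))|$ uniformly. This lets you factor the $V$-count through the $S$-count already done in Proposition~\ref{prop:density_(a,b)_transversal}, and in particular it \emph{explains} why the local density factors as the product of the two earlier local densities, something the paper's computation verifies only a posteriori by polynomial factorization. The paper's approach, on the other hand, stays entirely within elementary point-counting and avoids the Galois-cohomological input.
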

	\begin{proof}
		The strategy is similar to what we have done above.	Here, we also compute the complement of the described condition on $V(\mathcal{E})$. As in the previous lemma, we let $v\in |C|$ and $R = k(v)[\eps]/(\eps^2)$. In this computation, for brevity's sake, we denote $k = \mathbb{F}_q = k(v)$, and hence, $q = |k(v)|$. The number of points that fail the described condition is
		\begin{align*}
			&\quad |V^\text{non-reg}(R)| + |V^{\reg, \text{non-transversal}}(R)| \\
			&= \sum_{f\in V^\text{non-reg}(k)} |T_{V, f}(k)| + \sum_{\substack{f\in V^\reg(k) \\ \Delta(f) = 0}} |\ker d\Delta_f(k)| \\
			&= q^3 q^5+ \sum_{\substack{f \in V^\reg(k) \\ a(f) \neq 0, b(f) \neq 0 \\ \Delta(f) = 0}} |\ker d\Delta_f(k)| + \sum_{\substack{f\in V^\reg(k) \\ a(f) = b(f) = 0}} |\ker d\Delta_f(k)|, \teq\label{eq:lem_sum_types}
		\end{align*}
		where $q^3$ comes from the computation made in proposition~\ref{prop:density_reg} above.
		
		Observe that if $f\in V^\reg(k)$, then geometrically, namely, over $\overline{\mathbb{F}_q}$, $f$ is in the same orbit as $y(x^3+a(f) xy^2 + b(f) y^3)$. The condition $\Delta(f) = 0$, then implies that $f$ can only be of type $(1, 1, 2)$ or $(1, 3)$. We see easily that type $(1, 1, 2)$ and type $(1, 3)$ can only occur in the second and third summands, respectively, of~\eqref{eq:lem_sum_types}.
		
		We will now compute the number of $f \in V(k)$ of type $(2, 1, 1)$. We see at once that the double root must be rational and hence, over $k$, we have $f = c(x-ay)^2(x^2+uxy+vy^2)$. Thus, the number of such $f$ can be computed as
		$$
			|\Gm(k)| |\mathbb{P}^1(k)| |\Sym^2 \mathbb{A}^1(k) - \text{diagonal}(k)| = (q-1)(q+1)(q^2-q) = q(q^2-1)(q-1).
		$$
		
		Similarly, the number of $f$ of type $(1, 3)$ can be computed as
		$$
			|\Gm(k)| |\mathbb{P}^1(k)| |\mathbb{A}^1(k)| = (q-1)(q+1)q = q(q^2-1).
		$$
		
		To compute the $|\ker d\Delta_f|$ factors, we note that the map $V^\reg \to S$ is smooth by corollary~\ref{prop:Vreg_open_dense}, and the smooth locus of $\Delta: S \to \mathbb{A}^1$ is precisely $S -\{(0, 0)\}$. This enables us to compute the dimension of $\ker d\Delta_f$, and hence its size, at some point $f\in V^\reg(k)$. Indeed, for type $(1,1,2)$ and $(1,3)$, $|\ker d\Delta_f(k)|$ is $q^3 q = q^4$ and $q^3 q^2 = q^5$ respectively.
		
		Gathering all the results above, we have
		$$
			\eqref{eq:lem_sum_types} = q^8 + q^5(q^2-1)(q-1) + q^6(q-1)(q+1) = 3q^8- q^7	- 2q^6 + q^5.
		$$
		Thus, the number of transversal and regular points in $V(R)$ is
		$$
			q^{10} - 3q^8 + q^7 + 2q^6 - q^5 = q^5(q^2-1)(q^3-2q+1).
		$$
		The local density is thus 
		$$
			(1-q^{-2})(1-2q^{-2}+q^{-3}).
		$$
		as stated.
	\end{proof}
	
A similar computations and arguments as above will give us the following results.
	
\begin{prop} \label{prop:density_minimal_A_B}
		The density of sections in $S$ that are minimal is $\zeta_C(10)^{-1}$.
\end{prop}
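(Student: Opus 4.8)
\emph{Proof proposal.} The plan is to rewrite minimality as a finite-level local condition on the pair $(a,b)$ and then feed it into the density machinery already established. Recall that a section of $[S/\Gm]$ with Hodge bundle $\mathcal{L}$ is a pair $(a,b)\in\Gamma(C,\mathcal{L}^{\otimes 4}\oplus\mathcal{L}^{\otimes 6})$, and it is minimal exactly when $\deg\mathcal{L}$ cannot be lowered. By theorem~\ref{thm:minimal_Weierstrass} and the classical description of minimal Weierstrass equations over a discrete valuation ring of residue characteristic $\neq 2,3$ (see~\cite[section 9.4]{liu_algebraic_2006}), a global model is minimal over $C$ if and only if it is minimal at every closed point — since any nontrivial local ``inflation'' at a point $v$ strictly increases $\deg\mathcal{L}$, and the minimal model is unique — and the model is minimal at $v$ if and only if it is \emph{not} the case that $\mathrm{ord}_v(a)\geq 4$ and $\mathrm{ord}_v(b)\geq 6$ (when both hold one replaces $(a,b)$ by $(\pi_v^{-4}a,\pi_v^{-6}b)$ and $\mathcal{L}$ by $\mathcal{L}(-v)$). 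In particular, non-minimality at $v$ depends only on the image of $(a,b)$ in $(\mathcal{L}^{\otimes 4}\oplus\mathcal{L}^{\otimes 6})\otimes\mathcal{O}_{C,v}/\mathfrak{m}_v^{6}$.

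First I would invoke the finite-level version of Poonen's density theorem. The condition ``minimal'' is a congruence condition modulo $\mathfrak{m}_v^{6}$, of exactly the type treated in the proof of proposition~\ref{prop:Poonen_Density_transversal} (which works modulo $\mathfrak{m}_v^{2}$); running the same argument with $\mathfrak{m}_v^{6}$ in place of $\mathfrak{m}_v^{2}$, together with the evident analogue of lemma~\ref{lem:density_key_lem} (the bad locus at $v$ has relative size $\asymp|k(v)|^{-10}$, which is summable over $|C|$), shows that the global density is the product over $v\in|C|$ of the local densities. It then remains to compute one local density. Trivializing $\mathcal{L}$ near $v$, one counts the pairs in $(\mathcal{O}_{C,v}/\mathfrak{m}_v^{6})^{2}$, of which there are $|k(v)|^{12}$, that fail to be non-minimal; the non-minimal pairs are exactly those with $a\in\mathfrak{m}_v^{4}/\mathfrak{m}_v^{6}$ and $b\in\mathfrak{m}_v^{6}/\mathfrak{m}_v^{6}=\{0\}$, i.e.\ $|k(v)|^{2}\cdot 1$ of them (equivalently, as $a$ and $b$ vary independently, the non-minimal density factors as $|k(v)|^{-4}\cdot|k(v)|^{-6}$). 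Hence the local density of the minimal locus at $v$ is $1-|k(v)|^{-10}$, and multiplying over all closed points and using the Euler product $\zeta_C(s)=\prod_{v\in|C|}(1-|k(v)|^{-s})^{-1}$ gives the density $\prod_{v\in|C|}(1-|k(v)|^{-10})=\zeta_C(10)^{-1}$.

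The hard part is really the first step: establishing that global minimality over $C$ is equivalent to local minimality at every closed point, together with the precise pointwise criterion $\lnot(\mathrm{ord}_v a\geq 4\wedge\mathrm{ord}_v b\geq 6)$. This rests on the uniqueness assertion in theorem~\ref{thm:minimal_Weierstrass} and on the DVR theory of minimal Weierstrass models; once it is in hand, everything else — the application of the density estimates and the small local count — is routine and parallels the computations in propositions~\ref{prop:density_reg}--\ref{prop:density_V_reg_transversal}.
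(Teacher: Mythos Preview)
Your proposal is correct and follows essentially the same approach as the paper, which simply defers the proof with the phrase ``a similar computation and arguments as above''; you have supplied exactly those details---the local criterion $\lnot(\mathrm{ord}_v a\geq 4\wedge \mathrm{ord}_v b\geq 6)$ for minimality, the finite-jet variant of the Poonen sieve, and the local count $1-|k(v)|^{-10}$---so there is nothing further to add.
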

	
	\begin{prop} \label{prop:density_V_reg_minimal}
		The density of sections in $V(\mathcal{E})$ that are in $V(\mathcal{E})^\reg$ and whose associated invariant $(a,b)$ is minimal, is $\zeta_C(2)^{-1} \zeta_C(10)^{-1}$.
	\end{prop}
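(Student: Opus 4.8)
The plan is to run the same place-by-place sieve as in the proofs of Propositions~\ref{prop:density_reg} and~\ref{prop:density_V_reg_transversal}. First I would observe that the condition on a section $s$ — that it take values in $V(\mathcal{E})^\reg$ everywhere and that its invariant $(a(s),b(s))$ be minimal — is, at each closed point $v$, a condition on the $N$-jet of $s$ at $v$ for any $N\ge 6$ (minimality at $v$ means $\mathrm{ord}_v a(s)\le 3$ or $\mathrm{ord}_v b(s)\le 5$). The bad jets lie in $\{s(v)\notin V^\reg\}\cup\{\pi(s(v))=(0,0)\}$, and both the non-regular locus and the fibre $\pi^{-1}(0,0)$ — which is the closure of the $3$-dimensional type-$(1,3)$ orbit — have codimension $2$ in the $5$-dimensional $V$. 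Hence Poonen's sieve, in the jet-condition form already used for Propositions~\ref{prop:Poonen_Density_transversal} and~\ref{prop:density_minimal_A_B}, applies, and the density equals $\prod_{v\in|C|}\mu_v$, where $\mu_v$ is the local density at $v$ on jets $s\in V(\mathcal{E})(R_v)$ with $R_v=k(v)[t]/(t^N)$.

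Next I would compute $\mu_v$ by counting the complement; write $q=|k(v)|$, so $|V(\mathcal{E})(R_v)|=q^{5N}$. A jet fails the condition iff either (i) $s(v)$ is non-regular, or (ii) $s(v)\in V^\reg$ but $\mathrm{ord}_v a(s)\ge 4$ and $\mathrm{ord}_v b(s)\ge 6$. Case (i) contributes $q^{3}\cdot q^{5(N-1)}$ jets, using the count of $q^3$ non-regular points over $k(v)$ from the proof of Proposition~\ref{prop:density_reg}. For (ii), the inequalities $\mathrm{ord}_v a(s)\ge 4$ and $\mathrm{ord}_v b(s)\ge 6$ force $\pi(s(v))=(0,0)$, and combined with $s(v)\in V^\reg$ and the fibre description in Proposition~\ref{prop:orbits_on_fibers} this forces $s(v)$ to be of type $(1,3)$; there are $q(q^2-1)=q^3-q$ such points over $k(v)$, the count in the proof of Proposition~\ref{prop:density_V_reg_transversal}.

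The key step is to count, for a fixed type-$(1,3)$ point $f_0\in V^\reg(k(v))$, the jets $s$ with $s(v)=f_0$ and $\mathrm{ord}_v a(s)\ge 4$, $\mathrm{ord}_v b(s)\ge 6$. Here I would use that $\pi\colon V^\reg\to S$ is smooth of relative dimension $3$ (Proposition~\ref{prop:Vreg_open_dense}) and that this is a morphism over $S$, so that on jets $(a(s),b(s))$ are exactly the $S$-coordinates: by the infinitesimal lifting criterion, $V^\reg(R_v)\to V^\reg(k(v))\times_{S(k(v))}S(R_v)$ is surjective with fibres of size $q^{3(N-1)}$, while the $(a,b)\in S(R_v)$ with $\mathrm{ord}_v a\ge 4$ and $\mathrm{ord}_v b\ge 6$ number $q^{N-4}\cdot q^{N-6}$. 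So $f_0$ accounts for $q^{5N-13}$ jets and (ii) contributes $(q^3-q)q^{5N-13}$. The failing jets then number $q^{3}q^{5(N-1)}+(q^3-q)q^{5N-13}=q^{5N}(q^{-2}+q^{-10}-q^{-12})$, giving $\mu_v=(1-q^{-2})(1-q^{-10})$; taking $\prod_v\mu_v$ and using $\prod_v(1-|k(v)|^{-s})=\zeta_C(s)^{-1}$ yields $\zeta_C(2)^{-1}\zeta_C(10)^{-1}$.

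The hard part will be the jet count at a type-$(1,3)$ point: it rests on the smoothness of $\pi$ over $V^\reg$ and on $\pi$ being a morphism over $S$, so that imposing $\mathrm{ord}_v a\ge 4$ and $\mathrm{ord}_v b\ge 6$ cuts the jet count by exactly the naive factor $q^{4}\cdot q^{6}$. A subtlety worth flagging is that the type-$(4)$ and zero strata of $\pi^{-1}(0,0)$, near which $\pi$ is \emph{not} smooth, never enter the count — they are non-regular and already absorbed in case (i) — and this is precisely why the local density factors as $(1-q^{-2})(1-q^{-10})$, i.e.\ why regularity and minimality at $v$ behave as independent conditions, in parallel with the factorisation in Proposition~\ref{prop:density_V_reg_transversal}.
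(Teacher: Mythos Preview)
Your proposal is correct and follows exactly the approach the paper intends: the paper itself gives no proof beyond the sentence ``A similar computations and arguments as above will give us the following results,'' and your local computation via $N$-jets, the smoothness of $\pi|_{V^\reg}$, and the counts $q^3$ (non-regular) and $q(q^2-1)$ (type $(1,3)$) is precisely the ``similar computation'' that sentence points to. The factorisation $(1-q^{-2})(1-q^{-10})$ is the expected analogue of the factorisation in Proposition~\ref{prop:density_V_reg_transversal}.
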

	
	\section{Counting}

	\subsection{The Harder-Narasimhan polygon} \label{subsec:average_no_I-torsors}

We will first compute the average number of $I$-torsors, i.e. we want to estimate the following
	$$
		\lim_{d \to \infty} \frac{|\mathcal{M}_\mathcal{L}(k)|}{|\mathcal{A}_\mathcal{L}(k)|}.
	$$
	Let $d=\deg(\mathcal L)$. Since we are only interested in the behavior of this quotient when $d \to \infty$, when we do the computation below, we assume that $d \gg 0$. Note also that when $d \gg 0$, $|\mathcal{A}_\mathcal{L}(k)|$ is easy to compute using Riemann-Roch, since it is just the number of sections to $\mathcal{L}^{\otimes 4} \oplus \mathcal{L}^{\otimes 6}$. Indeed, we have
	$$
		|\mathcal{A}_\mathcal{L}(k)| = |H^0(C, \mathcal{L}^{\otimes 4}\oplus \mathcal{L}^{\otimes 6})| = q^{10 d +2(1-g)}, \qquad \text{when } d \gg 0.
	$$
	
We will count $|\mathcal{M}_\mathcal{L}(k)|$ by using the map 
$$\mathcal M_{\mathcal L} \to \Bun_G$$
and a partition of $\Bun_G(k)$ according to the Harder-Narasimhan polygon. 
Note that since $H^2(C, \Gm) = 0$ (see~\cite[p. 109]{milne_etale_1980}) every $G$-bundle $\mathcal E$ over $C$ can be lifted to a vector bundle $\mathcal F$ of rank $2$ which is well defined up to tensor twist by a line bundle.	If $\mathcal F$ is not semi-stable, then there is a unique tensor twist so that its Harder-Narasimhan filtration has the form
\[
	\xymatrix{
		0 \ar[r] & \mathcal{L}' \ar[r] & \mathcal F \ar[r] & \mathcal{O}_C \ar[r] & 0,
	}  \teq \label{eq:HN_F}
\]
with $\deg\mathcal{L}' > 0$. Note that after such normalization, $\mathcal F$ is determined uniquely by the associated $G$-bundle $\mathcal E$, and we will call $n=\deg\mathcal{L}'$ the unstable degree of $\mathcal E$.  It is not difficult to determine the size of the automorphism group of a $G$-bundle $\mathcal E$ of unstable degree $n$ large enough compared to the genus $g$
\[
		|\Aut_G(\mathcal E)| = (q-1)q^{n+1-g}. \teq \label{size-auto}
\]
	
	Let $\mathcal E$ be a $G$-torsor of unstable degree $n>0$; it can be lifted to a rank two vector bundle $\mathcal F$ fitting in the exact sequence \eqref{eq:HN_F}. We then have an associated 5-dimensional vector bundle $V({\mathcal E},\mathcal L)$:
	$$
		V({\mathcal E},\mathcal L) = (V\times^G \mathcal E) \otimes \mathcal{L}^{\otimes 2} = V(\mathcal{E}) \otimes \mathcal{L}^{\otimes 2} \cong \Sym^4 \mathcal F \otimes \det{}^{-2} \mathcal F \otimes \mathcal{L}^{\otimes 2}
	$$
	and $V({\mathcal E},\mathcal L)^\reg$ the regular part of $V({\mathcal E},\mathcal L)$. The filtration~\eqref{eq:HN_F} on $\mathcal F$ induces an obvious filtration on $V({\mathcal E},\mathcal L)$
	$$
		0 \subset \mathcal{F}_0 \subset \mathcal{F}_1 \subset \mathcal{F}_2 \subset \mathcal{F}_3 \subset \mathcal{F}_4 = V({\mathcal E},\mathcal L),
	$$
	where $\mathcal{F}_i/\mathcal{F}_{i-1} \cong \mathcal{L}'^{\otimes (2-i)} \otimes \mathcal{L}^{\otimes 2}$. 
	
	We will calculate the mass of the groupoid $\mathcal M_{\mathcal L}(k)$ in different ranges according to the integers $n$ and $d$:
	
		\paragraph{Case 1:}	$n>2d$. When $d$ is sufficiently large, the exact sequence~\eqref{eq:HN_F} splits, and we have $F \cong \mathcal{L}'\oplus \mathcal{O}_C$, which implies that
	\[ 
		V({\mathcal E},\mathcal L) \cong (\mathcal{L}'^{\otimes 2} \otimes\mathcal{L}^{\otimes 2}) \oplus (\mathcal{L}'\otimes \mathcal{L}^{\otimes 2}) \oplus \mathcal{L}^{\otimes 2} \oplus (\mathcal{L}'^{\otimes -1} \otimes \mathcal{L}^{\otimes 2}) \oplus (\mathcal{L}'^{\otimes -2} \oplus \mathcal{L}^{\otimes 2}). \teq \label{eq:V_direct_sum}
	\]
Because $n > 2d$, there is no non zero sections to of last 2 summands. Thus, any section $f$ to $V({\mathcal E},\mathcal L)$ will have the form
	$$
		f = c_0x^4 + c_1x^3y + c_2x^2y^2 = x^2(c_0x^2 + c_1xy + c_2y^2),
	$$
where $c_0,c_1,c_2$ are sections of the first three summands in \eqref{eq:V_direct_sum}. Observe that 
$$c_1^2-4c_0c_2 \in H^0(C, \mathcal{L}'^{\otimes 2}\otimes \mathcal{L}^{\otimes 4})$$ 
necessarily vanishes somewhere, and at that point, $f$ is of type $(2, 2)$, which is not in the regular part. Thus the subset of $\mathcal M_{\mathcal L}(k)$ with $n>2d$ is empty, and the contribution to the average is precisely 0.

	\paragraph{Case 2:} $n=2d$. If $\mathcal{L}'^{-1}\otimes \mathcal{L}^{\otimes 2}$ is not trivial, then since $\deg \mathcal{L}'^{-1}\otimes \mathcal{L}^{\otimes 2} = 0$, we have $H^0(C, \mathcal{L}'^{-1} \otimes \mathcal{L}^{\otimes 2}) = 0$. Thus, similar to the first case, there is no regular section. Hence, we need only to consider the case where $\mathcal{L}'\cong \mathcal{L}^{\otimes 2}$. In this case, when $d$ is sufficiently large, then $\mathcal F \cong \mathcal{L} \oplus \mathcal{O}$, and hence, $V({\mathcal E},\mathcal L) \cong \mathcal{L}^{\otimes 6} \oplus \mathcal{L}^{\otimes 4} \oplus \mathcal{L}^{\otimes 2} \oplus \mathcal{O}_C \oplus \mathcal{L}^{\otimes -2}$. Therefore, any section $f$ to $V({\mathcal E},\mathcal L)^\reg$ must have the form $(c_0, c_1, c_2, c_3, 0)$ with $c_3\neq 0$, or in a different notation
	$$
		f = c_0x^4 + c_1x^3y + c_2x^2y^2 +c_3xy^3,
	$$
since $H^0(C,\mathcal{L}^{\otimes -2})=0$. But now, we can bring this section to the form $y(x^3+axy^2 + by^3)$ via the automorphism
	$$
    \mtrix{1 & 0 \\ -c_2/3 & 1} \mtrix{c_3^{-1} & 0 \\ 0 & 1} \mtrix{0 & 1 \\ 1 & 0}, \qquad c_3 \neq 0.
	$$
We have thus shown that all regular sections in this case actually factor through the Weierstrass section. Thus, the contribution to the average of this case is precisely 1.
	
	\paragraph{Case 3:} $d<n<2d$. As above, where $d$ is sufficiently large, the exact sequence~\eqref{eq:HN_F} splits, and we have $\mathcal F \cong \mathcal{L}'\oplus \mathcal{O}$. This also splits $V({\mathcal E},\mathcal L)$ into a direct sum of $\mathcal{L}'^{\otimes (2-i)} \oplus \mathcal{L}^{\otimes 2}$ as in~\eqref{eq:V_direct_sum}. Using~\eqref{size-auto} and Riemann-Roch for the first three summands, we see that the mass of $\mathcal M_{\mathcal L}$ in this range is majorized by
	\begin{align*}
		&\quad \sum_{n=d+1}^{2d-1} \sum_{\deg\mathcal{L}' = n} \frac{q^{6d + 3n + 3(1-g)}|H^0(C, \mathcal{L}'^{-1}\otimes \mathcal{L}^{\otimes {2}})|}{(q-1) q^{n+1-g}} \\
		&= \sum_{n=d+1}^{2d-1} \frac{q^{6d+2n+2(1-g)} |\Sym_C^{2d-n}(\mathbb{F}_q)|}{q-1} \\
		&\leq \sum_{n=d+1}^{2d-1} \frac{Tq^{8d+n+2(1-g)}}{q-1} \tag{where $T$ is some constant} \\
		&= \frac{Tq^{8d+2(1-g)}}{q-1} \sum_{n=d+1}^{2d-1} q^n \\
		&\leq \frac{Tq^{10d+2(1-g)}}{q-1} \frac{1}{q-1}.
	\end{align*}
Thus, the contribution to the average is bounded above by
	$$
		\frac{Tq^{10d+2(1-g)}}{(q-1)^2 q^{10d+2(1-g)}} = \frac{T}{(q-1)^2}.
	$$
We also note that the implied constant $T$ only depends on the genus of $C$. 
	
	\paragraph{Case 4:} $d-g-1\leq n\leq d$. Similar to the above, when $d$ is sufficiently large, $\mathcal F \cong \mathcal{L}'\oplus \mathcal{O}_C$, which induces a splitting of the filtration on $V({\mathcal E},\mathcal L)$. We then see that
	$$
		\dim H^0(C, V({\mathcal E},\mathcal L)) = \sum_{i=0}^4 \dim H^0(C, \mathcal{L}'^{\otimes (2-1)} \otimes \mathcal{L}^{\otimes 2}) \leq 10d+ 5.
	$$
	Thus, if we let $A = |\Pic^0_{C/\mathbb{F}_q}(\mathbb{F}_q)| = |\Pic^i_{C/\mathbb{F}_q}(\mathbb{F}_q)|, \forall i$ (they are all equal since we assume that $C$ has an $\mathbb{F}_q$-rational point), then the mass of $\mathcal M_{\mathcal L}$ in this range is majorized by
	$$
		\sum_{n=d-g-1}^d \frac{A q^{10d+5}}{(q-1) q^{n+1-g}} = \frac{Aq^{10d+5}}{(q-1)q^{1-g}} \sum_{n=d-g-1}^d \frac{1}{q^n}.
	$$
The contribution to the average is therefore
	$$
		\frac{1}{q^{10d+2(1-g)}} \frac{A q^{10d+5}}{(q-1)q^{n+1-g}} \sum_{n=d-g-1}^d \frac{1}{q^n} = \frac{A q^{2+3g}}{q-1} \sum_{n=d-g-1}^{d} \frac{1}{q^n}.
	$$
But this goes to $0$ as $d$ goes to infinity, which means that there is no contribution to the average from this case.
	
	\paragraph{Case 5:} $0<n<d-g-1$ or $\mathcal F$ is semi-stable. By Riemann-Roch, we see that when $d$ is large enough,
	$$
		\dim H^0(C, V({\mathcal E},\mathcal L)) = \sum_{i=0}^4 \dim H^0(C, \mathcal{L}'^{\otimes (2-i)} \otimes \mathcal{L}^{\otimes 2}) = 10d+5(1-g).
	$$
Thus, when $0<n<d-g-1$ or $\mathcal F$ is semi-stable, we always have
$$
		|H^0(C, V({\mathcal E},\mathcal L))| = q^{10d+5(1-g)}.
$$
	
	To complete the computation in this case, we need one extra ingredient.
	\begin{prop} \label{prop:tamagawa_G_2}
		We have,
		$$
			|\Bun_G(\mathbb{F}_q)| = 2 q^{3(g-1)} \zeta_C(2).
		$$
	\end{prop}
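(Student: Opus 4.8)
The plan is to reduce the computation of $|\Bun_G(\mathbb{F}_q)|$, where $G=\PGL_2$, to the classical rank-two mass formula of Harder and Narasimhan: for every integer $d$,
\[
  |\Bun_{\GL_2}^{\deg=d}(\mathbb{F}_q)|=\frac{A}{q-1}\,q^{3(g-1)}\,\zeta_C(2),
\]
independently of $d$, where $\Bun_{\GL_2}^{\deg=d}$ denotes the stack of rank-two bundles of degree $d$ on $C$ and $A=|\Pic^0_{C/\mathbb{F}_q}(\mathbb{F}_q)|$. This is itself provable by a Harder--Narasimhan recursion in the spirit of the analysis of the preceding cases: the semistable locus is of finite type, and the $\zeta_C(2)$-factor emerges when one sums the convergent contributions of the unstable strata. (It is also an instance of the Weil conjecture on Tamagawa numbers, now a theorem; applied to $\PGL_2$, whose Tamagawa number is $2$, it yields the Proposition directly, the leading $2$ being exactly that Tamagawa number.)

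It remains to relate $\Bun_G$ to $\Bun_{\GL_2}$. Since the Brauer group $H^2(C,\Gm)$ vanishes, every $G$-torsor on $C$ is the projectivization $\mathbb{P}(\mathcal F)$ of a rank-two bundle $\mathcal F$, well defined up to twist, with $\mathbb{P}(\mathcal F)\cong\mathbb{P}(\mathcal F')$ if and only if $\mathcal F'\cong\mathcal F\otimes L$ for some $L\in\Pic(C)$. As twisting by $L$ multiplies $\det\mathcal F$ by $L^{\otimes 2}$, the parity of $\deg\mathcal F$ is a well-defined invariant of the $G$-torsor, so $\pi_0(\Bun_G)=\mathbb{Z}/2\mathbb{Z}$, with components $\Bun_G^{[0]}$ and $\Bun_G^{[1]}$. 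Fixing $e\in\{0,1\}$, I would analyze the morphism $\Bun_{\GL_2}^{\deg=e}\to\Bun_G^{[e]}$, $\mathcal F\mapsto\mathbb{P}(\mathcal F)$. For a rank-two $\mathcal F$, put $\Sigma_{\mathcal F}=\{L\in\Pic(C):\mathcal F\otimes L\cong\mathcal F\}$; the determinant identity forces $\Sigma_{\mathcal F}\subseteq\Pic(C)[2]$, so $\Sigma_{\mathcal F}$ is a finite subgroup of $\Pic^0_{C/\mathbb{F}_q}(\mathbb{F}_q)$. The fibre of this morphism over $\mathbb{P}(\mathcal F_0)$ then consists of the isomorphism classes $\mathcal F_0\otimes L$ with $\deg L=0$, taken modulo $\Sigma_{\mathcal F_0}$ --- a set of cardinality $A/|\Sigma_{\mathcal F_0}|$, each member of which has automorphism group of order $|\Aut(\mathcal F_0)|$ --- while the exact sequence $1\to\Gm\to\underline{\Aut}(\mathcal F_0)\to\underline{\Aut}(\mathbb{P}(\mathcal F_0))\to 1$ of automorphism sheaves on $C$, whose connecting homomorphism $\Aut(\mathbb{P}(\mathcal F_0))\to H^1(C,\Gm)=\Pic(C)$ has image exactly $\Sigma_{\mathcal F_0}$ (the kernel of $L\mapsto\mathcal F_0\otimes L$), gives $|\Aut(\mathbb{P}(\mathcal F_0))|=|\Aut(\mathcal F_0)|\cdot|\Sigma_{\mathcal F_0}|/(q-1)$.

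Combining these, the fibre over $P=\mathbb{P}(\mathcal F_0)$ contributes to $|\Bun_{\GL_2}^{\deg=e}(\mathbb{F}_q)|$ the amount
\[
  \frac{A/|\Sigma_{\mathcal F_0}|}{|\Aut(\mathcal F_0)|}=\frac{A}{|\Sigma_{\mathcal F_0}|}\cdot\frac{|\Sigma_{\mathcal F_0}|}{(q-1)\,|\Aut(P)|}=\frac{A}{(q-1)\,|\Aut(P)|},
\]
the groups $\Sigma_{\mathcal F_0}$ dropping out; summing over $P$ yields $|\Bun_{\GL_2}^{\deg=e}(\mathbb{F}_q)|=\frac{A}{q-1}\,|\Bun_G^{[e]}(\mathbb{F}_q)|$, hence $|\Bun_G^{[e]}(\mathbb{F}_q)|=\frac{q-1}{A}\,|\Bun_{\GL_2}^{\deg=e}(\mathbb{F}_q)|=q^{3(g-1)}\zeta_C(2)$ for both $e=0$ and $e=1$, and adding the two components gives $|\Bun_G(\mathbb{F}_q)|=2q^{3(g-1)}\zeta_C(2)$. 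I expect the delicate point to be precisely this automorphism bookkeeping: one must check carefully that the finite ``imprimitivity'' subgroups $\Sigma_{\mathcal F}\subseteq\Pic(C)[2]$ enter the size of the fibre of $\Bun_{\GL_2}\to\Bun_G$ and the ratio $|\Aut(\mathcal F)|/|\Aut(\mathbb{P}(\mathcal F))|$ with exactly the same multiplicity, so that they cancel --- and one must make sure the rank-two formula is used in the normalization stated above, in particular its independence of $d$, which (using $C(\mathbb{F}_q)\neq\emptyset$ to identify $\Pic^0$ with $\Pic^1$) is what forces the two components of $\Bun_G$ to contribute equally.
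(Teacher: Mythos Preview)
Your argument is correct. The paper's own proof is a single sentence: it simply invokes the fact that the Tamagawa number of $G=\PGL_2$ equals $2$, which via the Weil--Siegel mass formula gives the stated expression for $|\Bun_G(\mathbb{F}_q)|$ directly. You mention this route parenthetically yourself.

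What you actually carry out is a genuinely different derivation: you take the Harder--Narasimhan/Desale--Ramanan mass formula for $\Bun_{\GL_2}^{\deg=d}$ as input and then perform the descent to $\PGL_2$ by hand, carefully tracking how the finite groups $\Sigma_{\mathcal F}\subseteq\Pic(C)[2]$ enter both the fibre cardinality of $\Bun_{\GL_2}^{\deg=e}\to\Bun_G^{[e]}$ and the automorphism ratio, and showing they cancel. This is more explicit and more self-contained than the paper's appeal, at the cost of still relying on an equally deep input (the $\GL_2$ mass formula, i.e.\ essentially $\tau(\SL_2)=1$). The factor of $2$ in your approach arises transparently as $|\pi_0(\Bun_{\PGL_2})|=|\mathbb{Z}/2\mathbb{Z}|$, which is pleasant and matches its interpretation as the Tamagawa number. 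Either route is acceptable; yours has the virtue of making the bookkeeping visible rather than hidden inside an adelic volume computation.
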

	\begin{proof}
    This comes from the fact that the Tamagawa number of $G$ is 2.
  \end{proof}
	
	The contribution of this part to the average can now be computed as follows (here, the measure on $\Bun_G(\mathbb{F}_q)$ is just the counting measure, weighted by the sizes of the automorphism groups):
	\begin{align*}
		&\quad\lim_{d \to \infty} \disfrac{\int_{\Bun_G^{<d-g-1}(\mathbb{F}_q)} |H^0(C, V({\mathcal E},\mathcal L)^\reg)|d\mu}{|H^0(C, S\times^\Gm \mathcal{L})|} \\
		&= \lim_{d \to \infty} \disfrac{\int_{\Bun_G^{<d-g-1}(\mathbb{F}_q)} |H^0(C, V({\mathcal E},\mathcal L)^\reg)\,d\mu}{|H^0(C, \mathcal{L}^{\otimes 4})||H^0(C, \mathcal{L}^{\otimes 6})|} \\
		&= \lim_{d \to \infty} \disfrac{\int_{\Bun_G^{<d-g-1}(\mathbb{F}_q)} |H^0(C, V({\mathcal E},\mathcal L)^\reg)|\,d\mu}{\int_{\Bun_G^{<d-g-1}(\mathbb{F}_q)} |H^0(C, V({\mathcal E},\mathcal L))|\,d\mu} \disfrac{\int_{\Bun_G^{<d-g-1}(\mathbb{F}_q)} |H^0(C, V({\mathcal E},\mathcal L))|\,d\mu}{q^{10d+2(1-g)}} \\
		&= \lim_{d \to \infty} \disfrac{\int_{\Bun_G^{<d-g-1}(\mathbb{F}_q)} |H^0(C, V({\mathcal E},\mathcal L)^\reg)|\,d\mu}{|\Bun_G^{<d-g-1}(\mathbb{F}_q)| |H^0(C, V({\mathcal E},\mathcal L))|} \disfrac{|\Bun_G^{<d-g-1}(\mathbb{F}_q)| |H^0(C, V({\mathcal E},\mathcal L))|}{q^{10d+2(1-g)}} \\
		&= \lim_{d\to \infty} \disfrac{q^{10d+5(1-g)} \int_{\Bun_G^{<d-g-1}(\mathbb{F}_q)}\frac{|H^0(C, V({\mathcal E},\mathcal L)^\reg)|}{|H^0(C, V({\mathcal E},\mathcal L))|}\,d\mu}{q^{10d+2(1-g)}} \\
		&=\lim_{d\to \infty} q^{3(1-g)} \int_{\Bun_G^{<d-g-1}(\mathbb{F}_q)} \zeta_C(2)^{-1}\,d\mu \teq \label{eq:DCT} \\
		&= |\Bun_G(\mathbb{F}_q)| q^{3(1-g)} \zeta_C(2)^{-1} \\
		&= 2q^{3(g-1)}\zeta_C(2)q^{3(1-g)} \zeta_C(2)^{-1} \teq \label{eq:tamagawa_application} \\
		&= 2.
	\end{align*}
	The equality at~\eqref{eq:DCT} is due to the dominated convergent theorem, the fact that the integrand is bounded by 1, and the actual value of the limit given by proposition~\ref{prop:density_reg}. The equality at~\eqref{eq:tamagawa_application} is due to proposition~\ref{prop:tamagawa_G_2}.
	
	Altogether, we have
	$$
		\limsup_{d\to \infty} \frac{|\mathcal{M}_\mathcal{L}(k)|}{|\mathcal{A}_\mathcal{L}(k)|} \leq 3 + \frac{T}{(q-1)^2}.
	$$
	
	\subsection{The case $E[2](C)$ is non-trivial} \label{subsec:E[2](C)_nontrivial}
	We have estimated the average number of $I$-torsors. Proposition ~\ref{prop:inequalities} shows that we have a weaker link between the number of $I$-torsors and the size of the 2-Selmer groups when $E[2](C)$ is non-trivial. This subsection shows that the stronger inequality dominates our estimate of the average size of the 2-Selmer groups. In other words, we will show that the contribution from the case where $E[2](C)$ is non-trivial is 0.
	
	When $E[2](C)$ is non-trivial, where $E$ is given by $(\mathcal{L}, a, b)$, then we see that $x^3 + axz^2 + bz^3$ can be written in the form $(x+cz)(x^2-cxz+vz^2)$, where $c \in H^0(C, \mathcal{L}^{\otimes 2})$ and $v\in H^0(C, \mathcal{L}^{\otimes 4})$. In other words, $(a,b)$ is in the image of
	\begin{align*}
		H^0(C, \mathcal{L}^{\otimes 2}) \times H^0(C, \mathcal{L}^{\otimes 4}) &\to H^0(C, \mathcal{L}^{\otimes 4}) \times H^0(C, \mathcal{L}^{\otimes 6}) \\
		(c, v) &\mapsto (v-c^2, cv).
	\end{align*}
	
	When $d =\deg \mathcal{L}$ is sufficiently large, then we can use Riemann-Roch to compute the size of all the spaces involved and see that the number of all such pairs $(a,	b)$ is bounded by $q^{6d+2(1-g)}$.
	
	We know that the number of points on $C$, where the fiber of $E$ fails to be smooth is bounded by $\deg\Delta(a,b) = 10d$. Let $C'$ be the complement of these points in $C$, then from an argument similar to that of proposition~\ref{prop:inequalities}, we know that $|\Sel_2(E_{k(C)})| \leq |H^1(C', E[2])|$. Observe that we have the following map
	$$
		H^1(C', E[2]) \to \{\text{tame \etale{} covers of $C'$ of degree 4}\},
	$$
	where we know that the image lands in the tame part since the characteristic of our base field is at least $5$ and the cover is of degree 4.
	
	Note that the number of topological generators of $\pi_1^\tame(C')$ is bounded by $2g+10d$, since it is the profinite completion of the usual fundamental group of a lifting of $C'$ to $\mathbb{C}$. The right hand side is therefore bounded by $m4^{10d}$ where $m$ is some constant. Thus, to bound the size of $H^1(C', E[2])$, it suffices to bound the sizes of the fibers of this map.
	
	Suppose $T$ is a degree 4 \etale{} cover of $C'$, then giving $T$ the structure of an $E[2]$-torsor is the same as giving a map $E[2]\times_{C'} T \to T$ compatible with the structure maps to $C'$ satisfying certain properties. Since everything involved is proper and flat over $C'$, a map $E[2] \times_{C'} T \to T$ is determined uniquely by $(E[2]\times_{C'} T)_{k(C)} \to T_{k(C)}$, compatible with the structure maps to $\Spec k(C)$. Since everything here is \etale{} over $k(C)$, both sides they are in fact products of field extensions of $k(C)$. But now, we see at once that the number of such maps is bounded by the product of the dimension of both sides (as $k(C)$-vector spaces), which is $m' = 16\times 4$.
	
	The contribution of this case to the average is therefore bounded above by
	$$
		\frac{mm' q^{6d+2(1-g)} 4^{10d}}{q^{10d +2(1-g)}} = \frac{m'' 4^{10d}}{q^{4d}}.
	$$
	This goes to zero as $d$ goes to infinity if $q^4 > 4^{10}$ or equivalently, when $q>32$. This is the only source of restriction on the size of our base field.
	
	\subsection{The average in the transversal case}
	We will show that the average in this case is precisely 3, which is the content of theorem~\ref{thm:transversal}. The main observation is that we can completely ignore the range $d < n < 2d$.
	\begin{lem} \label{above}
		When $d<n<2d$, for all $s \in \Gamma(C, V({\mathcal E},\mathcal L))$, $\Delta(s) \in \Gamma(C, \mathcal{L}^{\otimes 12})$ is not square-free (i.e. not transversal).
	\end{lem}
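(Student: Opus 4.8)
The plan is to use the Harder--Narasimhan filtration to force every global section of $V(\mathcal E,\mathcal L)$ to be divisible by the linear form cutting out the destabilizing sub-line-bundle, and then to observe that such a divisibility produces a repeated factor in the discriminant. Concretely, I would first recall that, exactly as in the range $d<n<2d$ treated in subsection~\ref{subsec:average_no_I-torsors}, for $d\gg 0$ the extension~\eqref{eq:HN_F} splits, so $\mathcal F\cong\mathcal L'\oplus\mathcal O_C$ with $\deg\mathcal L'=n$, and~\eqref{eq:V_direct_sum} displays $V(\mathcal E,\mathcal L)$ as $\bigoplus_{i=0}^{4}\mathcal L'^{\otimes(2-i)}\otimes\mathcal L^{\otimes 2}$, the $i$-th summand carrying the coefficient of $x^{4-i}y^{i}$. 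Since $n>d$, the last summand $\mathcal L'^{\otimes-2}\otimes\mathcal L^{\otimes 2}$ has negative degree $2d-2n$ and hence no nonzero global section, so every $s\in\Gamma(C,V(\mathcal E,\mathcal L))$ has the form
\[
  s=c_0x^4+c_1x^3y+c_2x^2y^2+c_3xy^3=x\cdot g,\qquad g:=c_0x^3+c_1x^2y+c_2xy^2+c_3y^3,
\]
with $c_3\in H^0(C,\mathcal L'^{\otimes-1}\otimes\mathcal L^{\otimes 2})$, a line bundle whose degree $2d-n$ lies in $\{1,\dots,d-1\}$ because $d<n<2d$.

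Next I would invoke the classical behaviour of the discriminant under extraction of a linear factor. From $\mathrm{disc}(f_1f_2)=\mathrm{disc}(f_1)\,\mathrm{disc}(f_2)\,\mathrm{Res}(f_1,f_2)^2$ applied with $f_1=x$ (so $\mathrm{disc}(x)=1$ and $\mathrm{Res}(x,g)=\pm c_3$ is the $y^3$-coefficient of $g$) one obtains
\[
  \Delta(s)=u\,\mathrm{disc}(g)\,c_3^{\,2}
\]
for a fixed unit $u\in k^\times$; here $\mathrm{disc}(g)\in H^0(C,\mathcal L'^{\otimes 2}\otimes\mathcal L^{\otimes 8})$ and $c_3^{\,2}\in H^0(C,\mathcal L'^{\otimes-2}\otimes\mathcal L^{\otimes 4})$, so the $\mathcal L'$-twists cancel and both sides are genuine sections of $\mathcal L^{\otimes 12}$. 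Finally I would conclude as follows: if $c_3=0$, then $s=x^2(c_0x^2+c_1xy+c_2y^2)$ has a repeated linear factor, so $\Delta(s)\equiv 0$, which is not square-free; if $c_3\neq 0$, then $c_3$ vanishes at some closed point $v\in|C|$ because $\mathcal L'^{\otimes-1}\otimes\mathcal L^{\otimes 2}$ has positive degree, whence $c_3^{\,2}$—and therefore $\Delta(s)=u\,\mathrm{disc}(g)\,c_3^{\,2}$—vanishes to order at least $2$ at $v$. In either case the zero divisor of $\Delta(s)$ is not multiplicity free, i.e.\ $s$ is not transversal in the sense of Definition~\ref{transversal}, proving the lemma.

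The only step I expect to require genuine care is the discriminant identity $\Delta(xg)=u\,\mathrm{disc}(g)\,c_3^{2}$: one must verify that the classical polynomial identity upgrades to an equality of global sections of $\mathcal L^{\otimes 12}$ compatible with the $\mathcal L'$- and $\mathcal L$-twists—so that the order of vanishing of $c_3$ genuinely transfers to $\Delta(s)$—which amounts to the weight bookkeeping indicated above together with the universality of the constant in the resultant formula. The remaining steps are elementary.
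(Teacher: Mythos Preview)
Your proof is correct and follows essentially the same route as the paper: both use the splitting of $V(\mathcal E,\mathcal L)$ to force $c_4=0$, note that $c_3\in H^0(C,\mathcal L'^{-1}\otimes\mathcal L^{\otimes 2})$ must vanish at some closed point since its line bundle has positive degree, and conclude that $\Delta(s)$ vanishes there to order at least $2$. The only cosmetic difference is that the paper writes out the explicit discriminant polynomial in $c_0,\dots,c_3$ (with $c_4=0$) and reads off that every monomial is divisible by $c_3^2$, whereas you obtain the same $c_3^2$ factor via the resultant identity $\mathrm{disc}(xg)=\mathrm{disc}(g)\,\mathrm{Res}(x,g)^2$.
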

	\begin{proof}
		As before, when $d$ is sufficiently large, $F$ splits, which induces a splitting of $V({\mathcal E},\mathcal L)$,
		$$
			V({\mathcal E},\mathcal L) \cong (\mathcal{L}^{\otimes 2}\otimes \mathcal{L}'^{\otimes 2}) \oplus (\mathcal{L}^{\otimes 2} \otimes \mathcal{L}') \oplus \mathcal{L}^{\otimes 2} \oplus (\mathcal{L}^{\otimes 2}\otimes \mathcal{L}'^{\otimes -1}) \oplus (\mathcal{L}^{\otimes 2} \otimes \mathcal{L}'^{\otimes -2}).
		$$
		And hence, we can write $s = (c_0, c_1, c_2, c_3, c_4)$ where each ``coordinate'' is a section of the line bundles in the summand above, in the same order. Clearly, $c_4=0$ since $\deg \mathcal{L}^{\otimes 2} \otimes \mathcal{L}'^{\otimes -2} < 0$. Moreover, since $\deg\mathcal{L}^{\otimes 2} \otimes \mathcal{L}'^{-1} > 0$, there exists a point $v\in |C|$ such that $c_3$ vanishes.
		
		But now, at $v$, the discriminant is
		$$
			\Delta 
			= -27c_0^2c_3^4 + 18c_0c_1c_2c_3^3-4c_0c_2^3c_3^2-4c_1^3c_3^3+c_1^2c_2^2c_3^2,
		$$
  which vanishes to order at least 2.
	\end{proof}
	
	The result then follows from the computation in subsection~\ref{subsec:average_no_I-torsors}. Indeed, we can ignore case 3 due to lemma \ref{above}, and use the density computation in propositions~\ref{prop:density_(a,b)_transversal} and~\ref{prop:density_V_reg_transversal} (instead of proposition~\ref{prop:density_reg}) in case 5. Note also that the Weierstrass curves we are counting over are automatically minimal, by the transversality condition.

	\subsection{The average size of 2-Selmer groups}
	We will now present the proof of theorem~\ref{thm:main}. We have,
	\begin{align*}
		&\quad \limsup_{\deg\mathcal{L}\to \infty} \disfrac{\sum_{\mathcal{L}(E) \cong \mathcal{L}} |\Sel_2(E_K)|}{|H^0(C, \mathcal{L}^{\otimes 4}\oplus \mathcal{L}^{\otimes 6})|} \\
		&= \limsup_{\deg\mathcal{L} \to \infty} \disfrac{\sum_{\substack{\mathcal{L}(E) \cong \mathcal{L} \\ E[2](C) = \{0\}}} |\Sel_2(E_K)| + \sum_{\substack{\mathcal{L}(E) \cong \mathcal{L} \\ E[2](C) \neq \{0\}}} |\Sel_2(E_K)|}{|\mathcal{A}_\mathcal{L}(k)|} \\
		&\leq \limsup_{\deg\mathcal{L}\to \infty} \disfrac{|\mathcal{M}_\mathcal{L}(k)| + \frac{3}{4}\sum_{\substack{\mathcal{L}(E) \cong \mathcal{L} \\ E[2](C) \neq \{0\}}} |\Sel_2(E_K)|}{|\mathcal{A}_\mathcal{L}(k)|} \tag{by proposition~\ref{prop:inequalities}}	\\
		&= \limsup_{\mathcal{L} \to \infty} \frac{|\mathcal{M}_\mathcal{L}(k)|}{|\mathcal{A}_\mathcal{L}(k)|} \tag{by subsection~\ref{subsec:E[2](C)_nontrivial}} \\
		&\leq 3 + \frac{T}{(q-1)^2}. \tag{by subsection~\ref{subsec:average_no_I-torsors}}
	\end{align*}
	
	Theorem~\ref{thm:main} then follows from this computation and the following remarks:
	\begin{enumerate}[\quad (i)]
			\setlength{\itemsep}{0pt}
  		\setlength{\parskip}{0pt}
	    \item We can exclude the Weierstrass curves $E$ such that $\Delta E = 0$, because~\cite[lemma 4.1]{poonen_squarefree_2003} shows that their contribution is 0.
	    \item To impose minimality condition of $E$ on the count, we use propositions~\ref{prop:density_minimal_A_B} and~\ref{prop:density_V_reg_minimal} in case 5, which still gives us the number 2. For case 3, the estimate picks up at most an extra factor of $\zeta_C(10)$. Other cases are not affected.
	    \item In the count, pairs of the form $(a, b)$ and $(c^4 a, c^6 b)$ with $c\in k^\times$ give the same isomorphism class. By rewriting, we get the expression in~\eqref{eq:AS_AR_formula}.
	\end{enumerate}
	
  For the lower bound, we have,
	\begin{align*}
		&\quad \liminf_{\deg\mathcal{L} \to \infty} \disfrac{\sum_{\mathcal{L}(E) \cong \mathcal{L}} |\Sel_2(E_K)|}{|H^0(C, \mathcal{L}^{\otimes 4} \oplus \mathcal{L}^{\otimes 6})|} \\
		&\geq\liminf_{\deg\mathcal{L}\to \infty} \disfrac{\sum_{\substack{E\text{ transversal}\\ \mathcal{L}(E) \cong \mathcal{L}}} |\Sel_2(E_K)|}{|H^0(C, \mathcal{L}^{\otimes 4} \oplus \mathcal{L}^{\otimes 6})|} \\
		&= 3\zeta_C(10)^{-1} \tag{from theorem~\ref{thm:transversal} and proposition~\ref{prop:density_(a,b)_transversal}.}
	\end{align*}
	The same remarks as above apply, and we conclude the proof of theorem~\ref{thm:main}.
	
	\bibliography{BCQ1014}
\end{document}